\documentclass[11pt,fleqn]{amsart}

\usepackage{amsmath,amssymb,amsthm}
\usepackage{psfrag}
\usepackage{hyperref}
\usepackage[dvipsnames,usenames]{color}

\usepackage[english]{babel}
\usepackage{amsmath,amsfonts, amsthm,xcolor,amssymb}
\usepackage{paralist}
\numberwithin{equation}{section}
\usepackage{mathtools, mathabx}
\usepackage{xcolor}

\newtheorem{thm}{Theorem}[section]
\newtheorem{lem}[thm]{Lemma}
\newtheorem{cor}[thm]{Corollary}
\newtheorem{prop}[thm]{Proposition}

\newtheorem{defn}[thm]{Definition}
\theoremstyle{definition}
\newtheorem{rem}[thm]{Remark}
\theoremstyle{remark}

\newcommand{\norm}[1]{\left\Vert#1\right\Vert}

\newcommand{\abs}[1]{\left\vert#1\right\vert}

\newcommand{\R}{\mathbb{R}}
\newcommand{\N}{\mathbb{N}}

\newcommand{\res}{\!\!\mathop{\hbox{
                                \vrule height 7pt width .5pt depth 0pt
                                \vrule height .5pt width 6pt depth 0pt}}
                                \nolimits}

\DeclareMathOperator{\dive}{div}

\DeclareMathOperator{\sign}{sign}

\newcommand\restr[2]{{
  \left.\kern-\nulldelimiterspace 
  #1 
  \vphantom{ |} 
  \right|_{#2} 
  }}
{\left\{\begin{array}{@{}l@{}}}{\end{array}\right.}
\makeatletter 
\def\@makefnmark{} 
\makeatother 

\begin{document}

\title[First Dirichlet eigenvalue of the weighted 1-Laplacian]
{{First Dirichlet eigenvalue of the weighted 1-Laplacian operator}}

\author[R. Barbato, J.C. Sabina de Lis, S. Segura de Le\'on]{Rosa Barbato, Jos\'e C. Sabina de Lis and Sergio Segura de Le\'on}

\date{\today}

\address{Rosa Barbato}
\email{{\tt rossellabarbato95@gmail.com}\hfill\break\indent
\it Orcid link: \href{0000-0001-7976-3307}{0000-0001-7976-3307}}

\address{Jos\'{e} C. Sabina de Lis
\hfill \break \indent Departamento de An\'{a}lisis Matem\'{a}tico {\rm and} IUEA,
\hfill \break \indent Universidad de La
Laguna, \hfill \break \indent P. O. Box 456, 38200 -- La Laguna, SPAIN.}
\email{{\tt josabina@ull.edu.es}\hfill\break\indent
\it Orcid link: \href{0000-0002-2378-7614}{0000-0002-2378-7614}}

\address{Sergio Segura de Le\'on
\hfill \break\indent Departament d'An\`alisi Matem\`atica,
Universitat de Val\`encia, \hfill\break\indent Dr. Moliner 50,
46100 Burjassot, Val\`encia, SPAIN.} 
\email{{sergio.segura@uv.es}\hfill\break\indent
\it Orcid link: \href{0000-0002-8515-7108}{0000-0002-8515-7108}}

\keywords{Weighted  1-Laplacian operator, weighted $p$-Laplace operator, Cheeger constant, Pairing}

\subjclass[2020]{35P30, 35J60, 35J92}

\begin{abstract}
In this paper, we study the eigenvalue problem
\[\left\{\begin{array}{cl}
-\hbox{div\,}\left(a(x)\frac{Du}{|Du|}\right)=\Lambda\, b(x)\frac{u}{|u|} & \text{in }\Omega\\
u=0 & \text{on }\partial\Omega,
\end{array}\right.\]
where $a(x)$ and $b(x)$ are suitable nonnegative functions.
We prove that the first eigenvalue coincides with the weighted Cheeger constant. To see this identity, we analyze the behavior of the first Dirichlet eigenvalue of the weighted $p$-Laplacian operator as $p$ goes to $1$. In the case that the weight $a(x)$ be Lipschitz-continuous, we show that the limit of eigenvalues of the weighted $p$-Laplacian exists, and it is the weighted Cheeger constant. In addition, we check that the sequence of normalized $p$-eigenfunctions converges to the normalized eigenfunction of our limiting problem, which turns out to be bounded. For more general weights, we identify the first eigenvalue of the weighted 1-Laplacian operator with the weighted Cheeger constant and prove that the associated eigenfunction is bounded.
\end{abstract}

\maketitle

\section{Introduction}
The study of properties of differential operators with weights is a topic of great interest in mathematical analysis. This interest especially concerns weighted $p$-Laplacian equations, including eigenvalue problems. In this paper, our objective is to check that the first eigenvalue of the weighted 1-Laplacian coincides with the weighted Cheeger constant.
We also consider the problem of the first eigenvalue of the weighted $p$-Laplacian and let $p$ go to 1. We aim to analyze the convergence of both eigenvalues and eigenfunctions. Moreover, we want to provide conditions to ensure that their limits form an eigenpair of the limit problem, which involves the weighted $1$-Laplacian.

\subsection{Our starting point}
The Dirichlet spectrum of the $p$-Laplacian operator $\Delta_p(u)=\dive\left(|\nabla u|^{p-2}\nabla u\right)$ was exhaustively analyzed at the end of the 20th century, beginning in the mid-80s (see \cite{dethelin1, dethelin2}). The main issues concern features of the first eigenvalue (it is the least eigenvalue and the only one whose eigenfunctions do not change of sign) which is proved to be  simple on bounded domains and isolated (see \cite{anane, lindqvist0}). Moreover, the existence of a sequence of eigenvalues that tends to infinity (see \cite{garcia-peral}) is shown. For further information on the $p$-Laplacian spectrum, we refer to \cite{le} and \cite{lindqvist}.

The analysis of the first eigenvalue has been extended to the following weighted eigenvalue problem
\begin{equation}
\label{elliptic_problem}
    \begin{cases}
        -\dive \left(a(x)\abs{\nabla u_p}^{p-2}\nabla u_p\right)=\lambda b(x)\abs{u_p}^{p-2}u_p &\textrm{in}\; \Omega\\
        u_p=0 &\textrm{on}\; \partial \Omega,
    \end{cases}
\end{equation}
where $\Omega$ is an open and bounded domain, while $a(x)$ is a positive function and $b(x)$ is a nonnegative one, possibly unbounded that does not vanish identically. A solution is sought in the weighted Sobolev space $W_0^{1,p}(\Omega, a)$. This problem was originally introduced in \cite{Drabek} and later studied in \cite{Cuesta, leschmit2, Drabek2, Drabek1} (related problems can be found in \cite{Gavit, Ho}). The hypotheses assumed in \cite[Chapter 3]{Drabek} are:

\begin{subequations}\label{eq:A-p}
\begin{equation}\label{eq:A-p1} 
a(x)\in L_{loc}^1(\Omega)
\end{equation}
\begin{equation}\label{eq:A-p2} 
a^{-1}(x)\in L^s(\Omega) \text{  for certain } s\in \left(\frac Np,\infty\right)\cap \left[\frac1{p-1},\infty\right)
\end{equation}
\begin{equation}\label{eq:A-p3} 
b(x)\in L^r(\Omega) \text{ for some } r>\frac{Ns}{ps-N}.
\end{equation}
\end{subequations}

Under these assumptions, it is proved that
\begin{equation}
\label{autoval}
\lambda_{p}=\inf_{v\in W_0^{1,p}(\Omega,a)}\left\{\int_\Omega a(x)|\nabla v|^pdx \>:\> \int_\Omega b(x)|v|^pdx=1\right\}
\end{equation}
is positive, and it is the least eigenvalue of problem \eqref{elliptic_problem}. This eigenvalue turns out to be simple, and its associated eigenfunction is bounded and does not change its sign.

Regarding the existence of a sequence of eigenvalues in \eqref{elliptic_problem} that tends to $\infty$, it can be found in \cite{leschmit2}.

It is worth pointing out that the assumption of non-negativeness of function $b(x)$ can be weakened. Indeed, weights $b(x)$ that change sign are handled in \cite{Cuesta}, under the sole hypothesis the positive part $b^+(x)$ does not vanish, and the least positive eigenvalue is studied (see also \cite{leschmit2} where the existence of a nondecreasing sequence of positive eigenvalues is established). This extension also applies in the limit case $p=1$. Nonetheless, in this paper, we are only dealing with non-negative weights.

\subsection{Limit problem}
As already mentioned, our main concern is to analyze the principal eigenvalue of the limit problem
\begin{equation}
    \label{limit_problem_intro}
    \begin{cases}
    -\dive \left(a(x)\frac{Du}{|Du|}\right)=\Lambda  b(x) \frac{u}{|u|} &\textrm{in}\; \Omega\\
    u=0 &\textrm{on}\; \partial \Omega.
\end{cases}
\end{equation}
As far as we know, this problem has not yet been addressed.

The limit operator of the $p$--Laplacian as $p$ tends to 1 is formally given by $\displaystyle \dive\left(\frac{Du}{|Du|}\right)$. It is a highly singular operator whose natural energy space is $BV(\Omega)$, the space of all functions of bounded variation. The notion of solution for the $1$-Laplacian with a Dirichlet boundary condition was introduced in \cite{mazon, demengel1} through a vector field which plays the r\^ole of the quotient $Du/|Du|$ (where $Du$ and $|Du|$ are Radon measures). It is also subject to a weak form for the boundary condition. To obtain the desired solution, a Green's formula is needed. It is stated in \cite{mazon} and is based on the pairing theory developed by Anzellotti in \cite{anzellotti}. 

After these pioneering articles, the study of problems involving $1$-Laplacian has grown hugely, mainly due to its applications to image processing. This interest includes the anisotropic case (see, for example, \cite{moll} for a bounded weight and \cite{chata} for an unbounded one).

The first eigenvalue for the $1$-Laplacian was first studied in \cite{demengel2} and \cite{kawohl1}. In the first paper, the infimum of the Rayleigh quotient
\[\lambda_1(\Omega)=\inf\left\{\frac{\int_\Omega |\nabla v|\, dx}{\int_\Omega |v|\, dx}\>:\> v\in W_0^{1,1}(\Omega)\backslash\{0\}\right\}\]
is also identified as
\begin{equation}\label{rayl1}
 \inf\left\{\frac{\int_\Omega |D v|+\int_{\partial\Omega}|v|\, d\mathcal H^{N-1}}{\int_\Omega |v|\, dx}\>:\> v\in BV(\Omega)\backslash\{0\}\right\}
\end{equation}
and the eigenfunctions are found to have the form of characteristic functions of Cacciopoli sets. The second paper is devoted to checking that the first eigenvalue is exactly the Cheeger constant of the domain while a corresponding eigenfunction is recognized as the characteristic function of a Cheeger set. For further developments, we refer to \cite{alter, demengel3, kawohl2}.

The notion of the spectrum of the $1$-Laplacian is due to Chang in \cite{chang} using critical point theory for non-smooth functionals: an eigenvalue is a critical value, in the sense of the weak slope, of the functional given by
$J(v)=\int_\Omega |D v|+\int_{\partial\Omega}|v|\, d\mathcal H^{N-1}$. As expected, every eigenvalue is a solution of the associated Dirichlet problem involving the $1$-Laplacian. Nevertheless, there are solutions to this elliptic problem that are not critical values of functional $J$, and Chang provides some examples of these spurious solutions.

In our weighted setting, the variational formulation \eqref{rayl1} becomes 
\begin{equation}\label{rayl2}
\inf\left\{\frac{\int_\Omega a(x)|D v|+\int_{\partial\Omega}a(x)|v|\, d\mathcal H^{N-1}}{\int_\Omega b(x)|v|\, dx}\>:\> v\in BV(\Omega, a)\backslash\{0\}\right\}.
\end{equation}
Our aim is to see that this infimum is attained and its value coincides with both the first eigenvalue $\Lambda(\Omega,a,b)$ and the weighted Cheeger's constant.

\subsection{Asymptotic behavior}
In order to study equations involving the $1$-Laplacian, a widely used method is to take the limit as $p$ goes to 1 of similar equations in which the $p$-Laplacian operator appears (see, for instance, \cite{mazon}). This is also the procedure employed in \cite{kawohl1} to study the principal Dirichlet eigenvalue of the $1$-Laplacian (see \cite{barbator, pietra} for the Robin boundary condition).

The behavior of the Dirichlet spectrum of the $p$-Laplacian as $p$ goes to $1$ is studied in \cite{chang}, for the $1$-dimensional case, and in \cite{littig}, for the general case (see also \cite{sabina} and \cite{kajikiya} for the radial case). In \cite{littig}, it is shown that eigenvalues of the $p$-Laplacian converge to the corresponding eigenvalue of the $1$-Laplacian, avoiding spurious solutions. Hence, a good strategy to obtain the spectrum of the $1$-Laplacian is to study the convergence of the spectrum of the $p$-Laplacian operator.

\subsection{Assumptions}
Since our goal is to study the convergence of eigenpairs of problem \eqref{elliptic_problem} to an eigenpair of \eqref{limit_problem_intro}, the two prerequisites are to define a reasonable weighted BV-space and to get an extension of Anzellotti's theory in which unbounded weights can be handled. 

The first issue has been analyzed in \cite{baldi}. Nevertheless, we need a slightly different theory, which we only develop what is strictly necessary for our purposes. Regarding the second one, although we already have the results of \cite{CdCS}, we do not need such a large extension. A comprehensive definition of the pairing is all that is required to obtain a Green’s formula where every term, except for the pairing term itself, involves functions rather than Radon measures or abstract distributions.

We are able to adapt both prerequisites in Section \ref{preliminari} by assuming that our weight belongs to $W^{1,1}(\Omega)$.

On the other hand, our hypotheses on functions $a(x)$ and $b(x)$ must be compatible with those in \eqref{eq:A-p}. Thus, we have to let $s$ go to $\infty$, which leads to $a^{-1}(x)\in L^{\infty}(\Omega)$ and $b(x)\in L^r(\Omega)$, with $r>N$.

\subsection{Plan of the paper}
To summarize, the structure of the paper is the following. In Section \ref{preliminari}, we review some preliminary results as well as the properties of weights and weighted spaces. We also obtain a weighted Green formula. In Section \ref{weighted_plaplaciano} we prove the existence of the solution to the problem \eqref{elliptic_problem} together with some properties involving the first eigenfunction and the first eigenvalue. In Section \ref{problema_limite_sezione} we analyze the convergence process of the weighted $p$-Laplacian problem and check that, assuming the existence of the limit of $\lambda_p$ as $p$ goes to $1$, this limit is an eigenvalue of the weighted 1-Laplacian problem. In Section \ref{CheegerSec} we prove the existence of the limit and identify it with the weighted Cheeger constant for Lipschitz weights. Moreover, we also prove that for arbitrary weights the least eigenvalue of the 1-Laplacian is precisely the weighted Cheeger constant.

\section{Preliminaries}
\label{preliminari}
\subsection{Notation}
\begin{itemize}
\item $\Omega$ is a bounded open set in $\R^N$ ($N\geq 2$), with Lipschitz boundary $\partial \Omega$;

\item $\nu$ is the outer unit normal to the boundary of $\Omega$;

\item $\mathcal H^{N-1}(E)$ will denote the $N-1$-dimensional Hausdorff measure of a set $E$;

\item $\abs{E}$ denotes the Lebesgue measure of $E$;

\item $C_c^1(\Omega)$ denotes for the space of functions with compact support which are continuously differentiable on $\Omega$;

\item $C_c^\infty(\Omega)$ stands for the space of all functions with compact support having derivatives of all orders;

\item $T_k(s)=\min\{|s|,k\}\sign(s)$ denotes the truncation function at levels $\pm k$.
\end{itemize}
\subsection{Assumptions on weights}

Henceford, we consider that the weight $a(x)$ exhibits the following properties:

\begin{subequations}\label{eq:A}
\begin{equation}\label{A1}
a(x)\in W^{1,1}(\Omega).
\end{equation}
\begin{equation}\label{A2}
\text{There exists } \mu>0 \text{ such that } \mu\le a(x) \text{ a.e. in } \Omega.
\end{equation}
\begin{equation}\label{A3}
b(x)\in L^r(\Omega) \text{ for certain }r>N.
\end{equation}
\end{subequations}

\smallskip

We stress that the first hypothesis implies that the weight  $a(x)$ can be taken as $\mathcal H^{N-1}$-a.e. determined. More precisely, there exists a precise representative of $a(x)$ which is well-defined $\mathcal H^{N-1}$-a.e. In what follows, we denote by $a(x)$ this precise representative. Furthermore, each weight has a trace on $\partial\Omega$ belonging to $L^1(\partial\Omega)$ (see \cite{AFP} or \cite{EG}).

\begin{rem}\label{ipo-p}
We explicitly point out that each weight $a(x)$ satisfy
\begin{enumerate}
\item $a(x)\in L^1(\Omega)$
\item $a^{-1}(x)\in L^s(\Omega)$ for certain $s\in\Big(\frac Np, \infty\Big)\cap\Big[\frac1{p-1}, \infty\Big)$
\item $b\in L^r(\Omega)$ for some $r>\frac{Ns}{ps-N}$
\end{enumerate}
As a consequence, for every $p>1$,  it verifies all the conditions in \cite{Drabek} and so the theory developed in this book applies.
\end{rem}

\subsection{Functional setting}
 To analyze this kind of problems, we have to define the weighted Sobolev space $W^{1,p}(\Omega,a)$ and the weighted BV space. For properties of weighted Sobolev spaces, we refer to \cite{Drabek}; a different theory can be found in \cite{HKM}.
\begin{defn}
\label{sobolev_space}
We define the Sobolev space $W^{1,p}(\Omega, a)$ to be the completion of $C^{\infty}(\R^N)$ with respect to the norm 
$$\norm{w}_{W^{1,p}(\Omega,a)}=\left(\int_{\Omega}\abs{w}^p\;dx+\int_{\Omega}a(x)\abs{\nabla w}^p\;dx\right)^{\frac{1}{p}},$$
while $W_0^{1,p}(\Omega, a)$ is the completion of $C_c^{\infty}(\Omega)$ with respect to the same norm.
\end{defn}

We recall that, given $w\in L^1(\Omega)$, its total variation is defined as
\begin{equation}\label{varia}
\text{var\,}w=\sup \left\{\int_{\Omega}w \dive \varphi\, dx ; \varphi \in C^1_c(\Omega, \R^N), \;\textrm {s.t}\; \abs{\varphi(x)}\leq 1\right\},
\end{equation}
and denoted by $\int_\Omega|Dw|$. The space $BV(\Omega)$ is made up for all those $L^1$-functions having a finite total variation. The total variation defines a Radon measure whose main properties can be found in \cite{AFP, EG}. We highlight that $|Dw|$ is absolutely continuous with respect to the measure $\mathcal H^{N-1}$ (see \cite[Lemma 3.76]{AFP}). As a consequence, every weight $a(x)\in W^{1,1}(\Omega)$ is $|Dw|$-measurable. Another important feature of $|Dw|$ is that there exists a $|Dw|$-measurable function $\sigma\>:\>\Omega\to\R^N$ satisfying
\begin{align}
   \label{var-1}  &|\sigma(x)|=1 \quad |Dw|-a.e.  \\
   \label{var-2} &\int_\Omega w \dive \varphi\, dx =-\int_\Omega\langle \varphi,\sigma\rangle\, |Dw|\quad \forall \varphi\in C_c^1(\Omega; \R^N).
\end{align}

We will next discuss the weighted case.
 Let $a(x)$ be a weight function satisfying \eqref{eq:A} and let $w\in L^1(\Omega)$. Following \cite{baldi}, we define the weighted total variation by
 $$\text{var}_{a}w=\sup \left\{\int_{\Omega}w \dive \varphi\, dx ; \varphi \in C^1_c(\Omega, \R^N), \;\textrm {s.t}\; \abs{\varphi(x)}\leq a(x)\right\}.$$

 \begin{defn}
 Given a weight $a(x)$, the weighted $BV$ space is defined as (see \cite{baldi})
 $$BV(\Omega, a)=\left\{w \in L^1(\Omega): \textrm{var}_{a} w < +\infty\right\}.$$
 \end{defn}

 The space $BV(\Omega, a)$ is a Banach space when endowed with the norm
 $$\norm{w}_{BV(\Omega,a)}=\text{var}_{a} w+\int_{\Omega}\abs{w}\;dx.$$

Owing to \eqref{A2}, we deduce $\mu\, \text{var\,} w\le \text{var}_{a}w$, so that the continuous embedding $BV(\Omega, a)\hookrightarrow BV(\Omega)$ holds.
As a consequence
  $$BV(\Omega,a) \hookrightarrow L^1(\partial\Omega)$$
  $$BV(\Omega,a) \hookrightarrow L^{\frac{N}{N-1}}(\Omega)$$
  and, in addition, this embedding is compact into $L^q(\Omega)$ for $1\le q<\frac{N}{N-1}$.
 
 We point out that for every $w\in BV(\Omega, a)$, we may take the supremum for all smooth functions $|\varphi(x)|\le a(x)$ on both sides of \eqref{var-2} to get
 $$\text{var}_{a}w=\int_\Omega a(x)|Dw|$$
 (see \cite{baldi}).

A property we need to use is the lower semicontinuity of the weighted total variation. Since for each $\varphi \in C^1_c(\Omega, \R^N)$ the functional $w\mapsto \int_{\Omega}w \dive \varphi\, dx$ is continuous with respect to the $L^1$-convergence, it follows that the functional 
$$w\mapsto \int_\Omega a(x)|Dw|\qquad w \in BV(\Omega, a)$$ 
is lower semicontinuous with respect to this convergence.
This fact is improved in the next result using standard techniques.

\begin{thm}[Lower semicontinuity]
    \label{lower_semicontinuity}
    The functional defined by
$$w\mapsto \int_{\Omega}a(x)\abs{D w}+\int_{\partial\Omega}a(x)\abs{w}\, \mathcal H^{N-1}\qquad w \in BV(\Omega, a)$$ is lower semicontinuous with respect to the $L^1$-convergence.
\end{thm}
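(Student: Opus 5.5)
The plan is the standard extension trick: turn the boundary integral into part of a weighted total variation on a larger open set, where the $L^1$-lower semicontinuity already noted for $\int a|Dw|$ applies.

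\emph{Step 1: extensions.} Fix a bounded open set $U\supset\supset\overline\Omega$ (say a large ball). Since $\partial\Omega$ is Lipschitz, $a\in W^{1,1}(\Omega)$ admits an extension $\tilde a\in W^{1,1}(U)$. The trace of $\tilde a$ from $U\setminus\overline\Omega$ on $\partial\Omega$ equals the trace of $a$ from inside, so $\tilde a$ has no jump across $\partial\Omega$. Replacing $\tilde a$ by $\max\{\tilde a,\mu\}$ preserves the Sobolev regularity and the trace, so we may assume $\tilde a\ge\mu$ on $U$. For $w\in BV(\Omega,a)$ let $\tilde w$ be $w$ in $\Omega$ and $0$ in $U\setminus\Omega$. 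By the classical extension-by-zero formula,
$$D\tilde w=Dw\res\Omega-w\,\nu\,\mathcal H^{N-1}\res\partial\Omega,$$
where $w$ on $\partial\Omega$ denotes the trace.

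\emph{Step 2: identify the functional.} We will show
$$\int_U\tilde a|D\tilde w|=\int_\Omega a|Dw|+\int_{\partial\Omega}a|w|\,d\mathcal H^{N-1}.$$
The two pieces of $D\tilde w$ are mutually singular: the first lives on $\Omega$, the second on $\partial\Omega$. Hence $|D\tilde w|=|Dw|\res\Omega+|w|\mathcal H^{N-1}\res\partial\Omega$, and it remains to integrate $\tilde a$ against each piece. Here $\tilde a$ is understood through its precise representative, which is $\mathcal H^{N-1}$-a.e. defined, so it is $|D\tilde w|$-measurable.

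The delicate point, which I expect to be the main obstacle, is that on $\partial\Omega$ the precise representative of $\tilde a$ must coincide $\mathcal H^{N-1}$-a.e. with the trace of $a$. This holds because for Sobolev functions the precise representative at $\mathcal H^{N-1}$-a.e. point of a Lipschitz hypersurface is the average of the two one-sided traces, and here those traces agree. I would cite \cite{AFP} or \cite{EG} for this fact. Alternatively, one can bypass it by choosing $\tilde a$ continuous near $\partial\Omega$ when possible. In either case we also need the supremum formula $\text{var}_{\tilde a}\tilde w=\int_U\tilde a|D\tilde w|$ on $U$, which is the analogue of the identity quoted from \cite{baldi}; the same argument applies on $U$.

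\emph{Step 3: conclude.} Let $w_n\to w$ in $L^1(\Omega)$. Then $\tilde w_n\to\tilde w$ in $L^1(U)$. The functional $v\mapsto\int_U\tilde a|Dv|$ is a supremum of the $L^1$-continuous maps $v\mapsto\int_U v\dive\varphi\,dx$ over $\varphi\in C^1_c(U;\R^N)$ with $|\varphi|\le\tilde a$. A supremum of continuous maps is lower semicontinuous, so
$$\int_U\tilde a|D\tilde w|\le\liminf_n\int_U\tilde a|D\tilde w_n|.$$
By Step 2 this is exactly the claimed lower semicontinuity. If the $\liminf$ is $+\infty$ there is nothing to prove. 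Otherwise $\text{var}_{\tilde a}\tilde w<\infty$, so $w\in BV(\Omega,a)$ and its trace is defined, which justifies applying the identity to the limit.
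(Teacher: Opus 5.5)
Your proposal is the paper's proof. Both extend $w$ by zero to a larger open set and use $D\overline w=Dw-w\nu\,\mathcal H^{N-1}\res{\partial\Omega}$ to rewrite the functional as a weighted total variation there, then invoke the $L^1$-lower semicontinuity of that variation. Your Steps 1--2 (a $W^{1,1}$ extension $\tilde a\ge\mu$, and identifying its precise representative on $\partial\Omega$ with the trace of $a$) make explicit what the paper leaves implicit when it writes $\int_{\Omega'}a(x)|D\overline w|$.

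One caveat, shared with the paper. The identity $\mathrm{var}_{\tilde a}v=\int_U\tilde a|Dv|$ on which Step 3 rests is tied to lower semicontinuity of the weight, which is the setting of \cite{baldi}. It fails for some $W^{1,1}$ weights: take $a=1$ on a dense open set of small measure and $a=2$ on a set of positive measure; then every continuous $\varphi$ with $|\varphi|\le a$ must satisfy $|\varphi|\le 1$. A safer route is to use $\int_U\tilde a|Dv|=\sup_{|\varphi|\le1}\int_U v\dive(\tilde a\varphi)\,dx$ for bounded $v$, which follows from the Leibniz rule (cf.\ Proposition \ref{no-def}). Each such functional is stable under a.e.\ convergence with uniform bounds, so you can conclude by truncating with $T_k$ and letting $k\to\infty$.
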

\begin{proof}
Consider an open set $\Omega'$ satisfying $\Omega' \supset \supset \Omega$.
Let $w\in BV(\Omega,a)$ be fixed. 
If we define 
$$\overline{w}(x)=
\begin{cases}
   w(x) &\textrm{if}\;x\in \Omega\\
   0 &\textrm{if}\; x\in \Omega'\setminus\Omega\,,
\end{cases}
    $$
then $\overline w\in BV(\Omega')$ and (see \cite[Corollary 3.89]{AFP}) its gradient is 
$$D\overline w=Dw-w\nu \mathcal H^{N-1}\res{\partial\Omega}\,.$$
Hence, 
$$|D\overline w|=|Dw|+|w|\mathcal H^{N-1}\res{\partial\Omega}$$
and the weighted total variation of $w$ is given by
$$\int_{\Omega'}a(x)|D\overline w|=\int_\Omega a(x)|Dw|+\int_{\partial\Omega}a(x)|w|\, d\mathcal H^{N-1}.$$
The result follows since we already know that the total variation is lower semicontinuous with respect to the $L^1$-convergence.
\end{proof}

\subsection{Weighted vector fields and Green's formula}

This Section is devoted to recall and extend the Anzellotti theory. Let $z \in L^{\infty}(\Omega, \R^N)$ be such that $\dive z \in L^1(\Omega)$. For any $u \in BV(\Omega)\cap L^\infty(\Omega)$,  Anzellotti  in \cite{anzellotti} introduced the distribution $(z, Du):C_c^{\infty}(\Omega)\rightarrow\R$ defined as
$$\langle (z, Du),\varphi\rangle =-\int_{\Omega}u\varphi \dive z\, dx-\int_{\Omega}uz \cdot \nabla \varphi\, dx, \quad \forall \varphi \in C_{c}^{\infty}(\Omega).$$ Then it is proved that $(z, Du)$ is actually a Radon measure with finite total variation and
\begin{equation}\label{disu}
\abs{(z,Du)}\leq \norm{z}_{L^{\infty}(\Omega;\R^N)}\abs{Du},
\end{equation}
holds as measures. We notice that this inequality implies that if $a(x)$ is a weight and $u\in BV(\Omega, a)$, then (the precise representative of) $a(x)$ is a $|(z,Du)|$-summable function.  Furthermore, Anzellotti defines the weak trace of the normal component of $z$ as an application $[z, \nu]; \partial \Omega \rightarrow \R$ which satisfies $[z, \nu] \in L^{\infty}(\partial \Omega)$ and $\norm{[z, \nu]}_{L^{\infty}(\partial \Omega)}\leq \norm{z}_{L^{\infty}(\Omega, \R^N)}$. Finally, a Green's formula involving $[z, \nu]$ and $(z, Du)$ holds, namely:
\begin{equation}
\label{green_classica}
\int_{\Omega}(z, Du)+\int_{\Omega}u \dive z\;dx=\int_{\partial \Omega}u[z,\nu]\;d\mathcal{H}^{N-1}.
\end{equation}

Next, we want to adapt this theory to our case. To this end, let $z\in L^\infty(\Omega; \R^N)$ satisfy $\dive (a(x) z)\in L^1(\Omega)$.

Firstly, observe that 
$$\dive (a(x) z)=a(x) \dive z+z\cdot \nabla a(x)$$
in the sense of distributions.
Recalling that $a(x)\in W^{1,1}(\Omega)$, it follows from $\dive (a(x) z)\in L^1(\Omega)$ that $a(x) \dive z\in L^1(\Omega)$. Thus, \eqref{A2} implies $ \dive z\in L^1(\Omega)$, so that Anzellotti's theory applies. 

Observe that if $ \dive z\in L^1(\Omega)$, $u\in BV(\Omega)\cap L^\infty(\Omega)$ and $w\in W^{1,1}(\Omega)\cap L^\infty(\Omega)$, then (the precise representative of) $w$ is summable with respect to the measure $(z,Du)$, so that $w(z,Du)$ is a finite measure. On the other hand, if $z\in L^\infty(\Omega; \R^N)$ satisfies $\dive z\in L^1(\Omega)$ and $w\in W^{1,1}(\Omega)\cap L^\infty(\Omega)$, then $wz\in L^\infty(\Omega; \R^N)$ and 
\[\dive(w z)=w\dive z+z\cdot\nabla w\in L^1(\Omega);\]
thus the pairing $(wz, Du)$ is well-defined for every $u\in BV(\Omega)\cap L^\infty(\Omega)$.

\begin{lem}
\label{sacar}
Let $z\in L^\infty(\Omega; \R^N)$ satisfy $\dive z\in L^1(\Omega)$.
 If $u\in BV(\Omega)\cap L^\infty(\Omega)$ and $w\in W^{1,1}(\Omega)\cap L^\infty(\Omega)$, then 
 $$(wz, Du)=w(z,Du)\qquad\text{as measures.}$$
 \end{lem}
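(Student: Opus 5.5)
We prove the identity $(wz,Du)=w(z,Du)$ first for smooth $w$ and then pass to general $w$ by approximation. Take $\varphi\in C_c^\infty(\Omega)$.

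\emph{Step 1: $w\in C^\infty(\Omega)\cap W^{1,1}(\Omega)\cap L^\infty(\Omega)$.} Here $w\varphi\in C_c^\infty(\Omega)$, so we may test the definition of $(z,Du)$ with $w\varphi$:
\begin{align*}
\langle w(z,Du),\varphi\rangle&=\langle (z,Du),w\varphi\rangle\\
&=-\int_\Omega u\,w\varphi \dive z\,dx-\int_\Omega u\, z\cdot(w\nabla\varphi+\varphi\nabla w)\,dx.
\end{align*}
On the other hand, since $\dive(wz)=w\dive z+z\cdot\nabla w$,
\[
\langle (wz,Du),\varphi\rangle=-\int_\Omega u\varphi\,(w\dive z+z\cdot\nabla w)\,dx-\int_\Omega u\,wz\cdot\nabla\varphi\,dx .
\]
The two right-hand sides coincide. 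Hence the two measures agree as distributions, and therefore as Radon measures.

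\emph{Step 2: general $w\in W^{1,1}(\Omega)\cap L^\infty(\Omega)$.} By Meyers--Serrin (or by mollification on the support of $\varphi$, which is all that is needed), choose smooth $w_n$ with
\[
w_n\to w \text{ in } W^{1,1}_{loc}, \qquad \|w_n\|_\infty\le\|w\|_\infty .
\]
We pass to the limit on each side of the Step 1 identity applied to $w_n$.

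For the left-hand side, use the distributional formula just written. Each term converges:
\begin{itemize}
\item $u\varphi\, w_n\dive z\to u\varphi\, w\dive z$ in $L^1$ by dominated convergence along an a.e.-convergent subsequence, since $u$ is bounded and $\dive z\in L^1$;
\item $u\varphi\, z\cdot\nabla w_n\to u\varphi\, z\cdot\nabla w$ in $L^1$, since $u$ and $z$ are bounded;
\item $u\,w_nz\cdot\nabla\varphi\to u\,wz\cdot\nabla\varphi$ in $L^1$ by the same argument.
\end{itemize}
So $\langle (w_nz,Du),\varphi\rangle\to\langle (wz,Du),\varphi\rangle$.

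For the right-hand side we must show
\[
\int_\Omega \varphi\, w_n\,d(z,Du)\to\int_\Omega\varphi\, w\,d(z,Du).
\]
Since $|(z,Du)|\le\|z\|_\infty|Du|$, it suffices that $w_n\to \tilde w$ (the precise representative) $|Du|$-a.e. on $\operatorname{supp}\varphi$, with uniform bounds; dominated convergence then finishes.

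\emph{Main obstacle: this $|Du|$-a.e. convergence.} Take $w_n=w*\rho_{1/n}$, i.e.\ standard mollifiers. For $w\in W^{1,1}$, the averages $w*\rho_\varepsilon(x)$ converge to $\tilde w(x)$ at every point outside a set of $\mathcal H^{N-1}$-measure zero: indeed the approximate limit exists $\mathcal H^{N-1}$-a.e., by the capacity/fine properties of Sobolev functions (\cite{EG}). Since $|Du|\ll\mathcal H^{N-1}$, as recalled from \cite[Lemma 3.76]{AFP}, this convergence also holds $|Du|$-a.e. The uniform bound $\|w_n\|_\infty\le\|w\|_\infty$ together with $|(z,Du)|(\Omega)<\infty$ then gives the convergence. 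Equating the limits yields $(wz,Du)=w(z,Du)$ as distributions, hence as measures.
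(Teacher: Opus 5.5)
Your argument is correct, and it takes a different, self-contained route from the paper's.

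\textbf{The paper's route.} The paper does not verify the identity directly. It reruns the computation behind \cite[Proposition 2.3]{MS}, which is a Leibniz-type rule for the pairing. In the present setting that rule reads $(wz,Du)+u\,z\cdot\nabla w=u\,z\cdot\nabla w+w(z,Du)$, and the paper then cancels the common absolutely continuous term $u\,z\cdot\nabla w$.

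\textbf{Your route.} You test the definition of $(z,Du)$ with $w\varphi$ for smooth $w$, then pass to the limit under mollification. The left-hand side converges by strong $L^1_{loc}$ convergence of $w_n$ and $\nabla w_n$. The only delicate point is the convergence of $\int\varphi\, w_n\,d(z,Du)$, and you handle it correctly:
\begin{enumerate}
\item a function $w\in W^{1,1}$ has an $\mathcal H^{N-1}$-negligible approximate discontinuity set, since it has no jump part;
\item hence $w*\rho_{1/n}\to\tilde w$ holds $\mathcal H^{N-1}$-a.e., and therefore $|Du|$-a.e.\ by \cite[Lemma 3.76]{AFP};
\item dominated convergence then applies with respect to the finite measure $|(z,Du)|\le\|z\|_\infty|Du|$.
\end{enumerate}

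\textbf{What each buys.} The paper's route is shorter and places the lemma inside the general product-rule calculus for pairings. Yours shows exactly where $W^{1,1}$, rather than $BV$, is needed. It keeps $\dive(wz)\in L^1$, so that $(wz,Du)$ is an Anzellotti pairing at all. It also lets $\int u\varphi\, z\cdot\nabla w_n\,dx$ pass to the limit by strong convergence and identifies the limit with the precise representative $\tilde w$, with no jump-set contribution.

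\textbf{Minor housekeeping.} Step 1 needs a globally defined smooth function, while $w*\rho_{1/n}$ is only defined on $\{\operatorname{dist}(x,\partial\Omega)>1/n\}$. Apply Step 1 to $\eta\,(w*\rho_{1/n})$ instead, with a cutoff $\eta\in C_c^\infty(\Omega)$ equal to $1$ near $\operatorname{supp}\varphi$. This changes nothing in the computation.
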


\begin{proof}
We may follow the proof of \cite[Proposition 2.3]{MS} and arrive at \cite[Equation (2.17)]{MS}, which in our context reads as
$$(wz,Du)+u z\cdot\nabla w=u z\cdot\nabla w+w(z,Du)\,.$$
Simplifying, we are done.

\end{proof}

\begin{prop}
 \label{green-1}
    Let $z\in L^\infty(\Omega; \R^N)$ satisfy $\dive (a(x) z)\in L^1(\Omega)$  and let $u \in BV(\Omega, a)\cap L^\infty(\Omega)$. Then the following Green formula holds true:
    $$\int_\Omega u\dive (a(x) z)\, dx+\int_\Omega a(x) (z,Du)=\int_{\partial\Omega} u a(x) [z,\nu]\, d\mathcal H^{N-1}.$$
 \end{prop}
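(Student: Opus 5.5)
The plan is to reduce the statement to Anzellotti's classical Green formula \eqref{green_classica}, applied to a vector field that carries the weight. The natural choice is $\tilde z=a(x)z$. It satisfies $\dive \tilde z\in L^1(\Omega)$ by hypothesis, but it need not be bounded, since $a\in W^{1,1}(\Omega)$ only. So I would first work with a truncated weight and then pass to the limit.

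\textbf{Step 1: truncated weights.} For $k>0$ set $a_k=T_k(a)=\min\{a,k\}$ (recall $a\ge\mu>0$). Then $a_k\in W^{1,1}(\Omega)\cap L^\infty(\Omega)$ with $\nabla a_k=\nabla a\,\chi_{\{a<k\}}$. As observed before Lemma \ref{sacar}, $\dive z\in L^1(\Omega)$ follows from $\dive(a z)\in L^1$, and therefore
\[
\dive(a_k z)=a_k\dive z+z\cdot\nabla a_k\in L^1(\Omega).
\]
Applying \eqref{green_classica} to the bounded field $a_kz$ and to $u\in BV(\Omega)\cap L^\infty(\Omega)$ gives
\[
\int_\Omega u\,\dive(a_kz)\,dx+\int_\Omega (a_kz,Du)=\int_{\partial\Omega}u\,[a_kz,\nu]\,d\mathcal H^{N-1}.
\]
By Lemma \ref{sacar}, $(a_kz,Du)=a_k(z,Du)$ as measures. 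I also need the boundary identity $[a_kz,\nu]=a_k[z,\nu]$ $\mathcal H^{N-1}$-a.e. on $\partial\Omega$, where $a_k$ is its trace. I would prove it by comparing Green formulas: apply \eqref{green_classica} to $z$ with test function $a_k v$ and to $a_k z$ with test function $v$, for $v\in W^{1,1}\cap L^\infty$ (or $v\in C^1(\overline\Omega)$). The interior terms coincide after expanding $\dive(a_k z)$ and $(z,D(a_kv))$ via the Leibniz rule of Lemma \ref{sacar}. Since traces of such $v$ are dense in $L^1(\partial\Omega)$, the boundary terms then agree.

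\textbf{Step 2: passing to the limit $k\to\infty$.}
\begin{itemize}
\item \emph{Volume term.} $\dive(a_kz)=a_k\dive z+z\cdot\nabla a\,\chi_{\{a<k\}}$ converges to $a\dive z+z\cdot\nabla a=\dive(az)$ in $L^1$ by dominated convergence. This uses $a\dive z\in L^1$, which was shown earlier. Since $u$ is bounded, $\int_\Omega u\dive(a_kz)\to\int_\Omega u\dive(az)$.
\item \emph{Pairing term.} By \eqref{disu}, $|(z,Du)|\le\|z\|_\infty|Du|$. Because $u\in BV(\Omega,a)$, the precise representative of $a$ is $|Du|$-integrable ($\int_\Omega a|Du|<\infty$). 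Also $a_k\to a$ pointwise wherever the representative is defined, which holds $\mathcal H^{N-1}$-a.e. and hence $|Du|$-a.e. Dominated convergence then gives $\int a_k(z,Du)\to\int a(z,Du)$.
\item \emph{Boundary term.} $|u\,a_k[z,\nu]|\le \|u\|_\infty\|z\|_\infty\, a$ on $\partial\Omega$, and the trace of $a$ lies in $L^1(\partial\Omega)$. The trace of $T_k(a)$ equals $T_k$ of the trace of $a$ (truncation commutes with traces for Lipschitz $T_k$), so it increases to the trace of $a$. Dominated convergence applies.
\end{itemize}
Putting these together yields the stated formula.

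\textbf{Main obstacle.} I expect the delicate point to be the boundary identity $[a_kz,\nu]=a_k[z,\nu]$. One difficulty is making sure the representative of $a$ used in the measure term, on the boundary, and in the pairing is consistently the precise one. The other is that Lemma \ref{sacar} is only an interior statement. If the test-function comparison proves awkward, my fallback is Anzellotti's characterization of $[z,\nu]$ as the limit of normal traces along approximations. An alternative is to avoid the identity altogether by defining the boundary term through $[a_kz,\nu]$ and showing $[a_kz,\nu]\to a[z,\nu]$ in $L^1(\partial\Omega)$.
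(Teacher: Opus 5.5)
Your proposal is correct and follows essentially the paper's proof. You truncate the weight, apply Anzellotti's Green formula \eqref{green_classica} to the bounded field $T_k(a)z$, use Lemma \ref{sacar} for the interior pairing and the identity $[T_k(a)z,\nu]=T_k(a)[z,\nu]$ on $\partial\Omega$, and let $k\to\infty$ term by term by dominated convergence. The one difference is that the paper cites Lemma 5.6 of Caselles et al.\ for the boundary identity, while you derive it directly by comparing the Green formulas for the pairs $(z,a_kv)$ and $(a_kz,v)$ with $v\in C^1(\overline\Omega)$, which is a valid self-contained proof.
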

 
 \begin{proof}
 Take $k>\mu$ and notice that
 $$\dive (T_k(a(x)) z)=T_k(a(x))\dive z+z\cdot \nabla T_k(a(x))$$
 wherewith $\dive (T_k(a(x)) z)\in L^1(\Omega)$ and so we may use Anzellotti's theory.

Now, Green's formula leads to
\begin{multline*}
 \int_\Omega uT_k(a(x))\dive z\, dx+\int_\Omega uz\cdot \nabla T_k(a(x))\, dx=\int_\Omega u\dive (T_k(a(x))z)\, dx\\
 =-\int_\Omega (T_k(a(x)) z, Du)+\int_{\partial\Omega}u[T_k(a(x))z, \nu]\, d\mathcal H^{N-1}.
\end{multline*}
We manipulate the right-hand side, applying Lemma \ref{sacar} in the first term and \cite[Lemma 5.6]{caselles} in the second. So, we deduce
\begin{multline}\label{eq:fun1}
\int_\Omega uT_k(a(x))\dive z\, dx+\int_\Omega uz\cdot \nabla T_k(a(x))\, dx\\
=-\int_\Omega T_k(a(x)) (z, Du)+\int_{\partial\Omega}u T_k(a(x)) [z, \nu]\, d\mathcal H^{N-1}.
\end{multline}
Our aim is to let $k$ go to infinity in \eqref{eq:fun1} and it is a consequence of Lebesgue's Theorem. To check it, just observe that functions $u$, $z$ and $[z,\nu]$ are bounded, $a(x)\dive z\in L^1(\Omega)$, $a(x)\in W^{1,1}(\Omega)$ and $a(x)$ is summable with respect to the measure $|(z,Du)|$ (owing to $u\in BV(\Omega, a)$). Therefore, \eqref{eq:fun1} becomes
$$
\int_\Omega u a(x) \dive z\, dx+\int_\Omega uz\cdot \nabla a(x)\, dx
=-\int_\Omega a(x) (z, Du)+\int_{\partial\Omega}u a(x) [z, \nu]\, d\mathcal H^{N-1}.
$$
The proof finishes taking into account that $\dive(a(x)z)=a(x) \dive z+z\cdot \nabla a(x)$.
 \end{proof}

 \begin{prop}\label{green-2}
    Let $z\in L^\infty(\Omega; \R^N)$ satisfy $\dive (a(x) z)\in L^N(\Omega)$  and let $u \in BV(\Omega, a)$. Assume also that $a(x)|u|\in L^1(\partial\Omega)$.
    Then the following Green formula holds true:
    $$\int_\Omega u\dive (a(x) z)\, dx+\int_\Omega a(x) (z,Du)=\int_{\partial\Omega} u a(x) [z,\nu]\, d\mathcal H^{N-1}.$$
 \end{prop}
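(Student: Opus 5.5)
We reduce to Proposition \ref{green-1} by truncating $u$. Since $u\in BV(\Omega,a)\subset BV(\Omega)$, the truncations $T_k(u)$, $k>0$, lie in $BV(\Omega,a)\cap L^\infty(\Omega)$. Indeed, truncation is $1$-Lipschitz, so $|DT_k(u)|\le |Du|$ as measures and therefore $\int_\Omega a(x)|DT_k(u)|\le\int_\Omega a(x)|Du|<\infty$. Proposition \ref{green-1} then gives, for every $k$,
\begin{equation*}
\int_\Omega T_k(u)\dive (a(x) z)\, dx+\int_\Omega a(x) (z,DT_k(u))=\int_{\partial\Omega} T_k(u) a(x) [z,\nu]\, d\mathcal H^{N-1}.
\end{equation*}
We now let $k\to\infty$ in each of the three terms.

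\emph{First term.} By the embedding $BV(\Omega,a)\hookrightarrow L^{\frac{N}{N-1}}(\Omega)$, we have $u\in L^{N/(N-1)}(\Omega)$. Since $\dive(a(x)z)\in L^N(\Omega)$, Hölder's inequality shows that $u\dive(a(x)z)\in L^1(\Omega)$. As $|T_k(u)\dive(a(x)z)|\le |u|\,|\dive(a(x)z)|$, dominated convergence applies.

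\emph{Boundary term.} The trace of $T_k(u)$ equals $T_k$ of the trace of $u$, because the trace commutes with Lipschitz functions vanishing at $0$. Hence the integrand is dominated by $\|z\|_\infty\, a(x)|u|$, which is in $L^1(\partial\Omega)$ by hypothesis. Again dominated convergence applies.

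\emph{Pairing term (the main obstacle).} We need $\int_\Omega a(x)(z,DT_k(u))\to\int_\Omega a(x)(z,Du)$. The difficulty is that Anzellotti's pairing $(z,Du)$ is a priori defined only for bounded $u$, or for $u\in BV$ with $\dive z\in L^N$. Here only $\dive(a z)\in L^N$ is known, and $\dive z$ need not be in $L^N$, since $\nabla a$ is merely in $L^1$.

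So the first step is to make sense of $a(x)(z,Du)$ for unbounded $u$. We do this by defining the distribution $\langle a(z,Du),\varphi\rangle=-\int_\Omega u\varphi\dive(az)\,dx-\int_\Omega u\,a\,z\cdot\nabla\varphi\,dx$. This makes sense because $u\in L^{N/(N-1)}$, $\dive(az)\in L^N$, and $az\in L^\infty$ if $a$ is bounded. When $a$ is unbounded we interpret the second integral via $T_j(a)$ and let $j\to\infty$; the alternative is to use the defining formula only with test functions.

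To show that this distribution is a measure, and to pass to the limit, the key estimate is
\begin{equation*}
|a(z,DT_k(u))|\le \|z\|_\infty\, a\,|DT_k(u)|\le \|z\|_\infty\, a\,|Du|,
\end{equation*}
which is inequality \eqref{disu} multiplied by $a$. It gives uniform bounds in total variation. Moreover, for $\varphi\in C_c^\infty$ the distributions $a(z,DT_k(u))$ converge to the distribution above, by dominated convergence in the explicit formula. We conclude that $a(z,Du)$ is a Radon measure with $|a(z,Du)|\le\|z\|_\infty a|Du|$, and that $a(z,DT_k(u))\to a(z,Du)$ weakly-$*$.

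To upgrade this to convergence of the total integrals over $\Omega$, we use the additional structure of truncations. On $\{|u|<k\}$ the measures $(z,DT_k(u))$ and $(z,Du)$ coincide, by a locality property of the pairing such as \cite[Proposition 2.3]{MS}. Hence
\begin{equation*}
\Big|\int_\Omega a(z,DT_k(u))-\int_\Omega a(z,Du)\Big|\le 2\|z\|_\infty\int_{\{|u|\ge k\}} a\,|Du|.
\end{equation*}
Here $\{|u|\ge k\}$ is understood via the precise representative; on the jump set we use the upper and lower approximate limits. This bound tends to $0$ as $k\to\infty$, since $a|Du|$ is a finite measure and the sets $\{|u|\ge k\}$ decrease to a $|Du|$-null set.

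If the locality argument proves delicate, a fallback is available: write the pairing term through the distributional formula, test with cutoffs $\varphi_j\uparrow 1$, and use the Green identity itself to identify the total mass. Putting the three limits together yields the stated formula.
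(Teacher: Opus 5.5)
Your proposal is correct and follows the paper's proof: you truncate, apply Proposition \ref{green-1} to $T_k(u)$, and let $k\to\infty$ by dominated convergence, using the same dominating functions $|u|\,|\dive(a(x)z)|\in L^1(\Omega)$ and $\|z\|_\infty a(x)|u|\in L^1(\partial\Omega)$. For the pairing term the paper only asserts domination by $\|z\|_\infty a(x)|Du|$; your locality estimate justifies that assertion and is enough by itself, best phrased as comparing $(z,DT_k(u))$ with $(z,DT_j(u))$ for $j>k$ via \cite[Proposition 2.8]{anzellotti}, which makes $a(x)(z,DT_k(u))$ Cauchy in total variation --- so you can drop the detour through the explicit distributional formula, whose ``dominated convergence'' step would need $a(x)u\in L^1_{loc}(\Omega)$, and this is not guaranteed for unbounded $a$ when $N\ge 3$.
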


 \begin{proof}
     We may apply Proposition \ref{green-1} to every truncation $T_k(u)$, obtaining
     $$\int_\Omega T_k(u)\dive (a(x) z)\, dx+\int_\Omega a(x) (z,DT_k(u))=\int_{\partial\Omega} T_k(u) a(x) [z,\nu]\, d\mathcal H^{N-1}.$$
     We are allowed to let $k$ go to infinity due to Lebesgue's Theorem. Indeed, the first term is a consequence of $\dive (a(x) z)\in L^N(\Omega)$  and $u \in BV(\Omega, a)\subset L^{\frac{N}{N-1}}(\Omega)$, in the second one the integrand is dominated by $\|z\|_\infty a(x) |Du|$ (a finite measure) while in the third one is dominated by $a(x)|u|\in L^1(\partial\Omega)$. Passing to the limit we get the desired identity.
 \end{proof}

 We point out that we have not defined the pairing $(a(x)z, Du)$. Instead, we are checking that the measure $a(x)(z, Du)$ verifies what the definition of $(a(x)z, Du)$ should be. We note that this pairing can be studied following the theory in \cite{CdCS}.

 \begin{prop}\label{no-def}
  Let $z\in L^\infty(\Omega; \R^N)$ satisfy $\dive (a(x) z)\in L^1(\Omega)$  and let $u \in BV(\Omega, a)\cap L^\infty(\Omega)$. Then
  $$\int_\Omega\varphi a(x) (z,Du)=-\int_\Omega \varphi  u \dive (a(x)z)\, dx-\int_\Omega u a(x) z\cdot\nabla \varphi\, dx$$
  for all $\varphi\in C_c^\infty(\Omega)$.
 \end{prop}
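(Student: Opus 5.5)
The plan is to localize the Green formula of Proposition \ref{green-1} by multiplying $u$ by the test function $\varphi$. Fix $\varphi\in C_c^\infty(\Omega)$ and set $v=\varphi u$.

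First I check that $v$ is admissible in Proposition \ref{green-1}. Clearly $v\in L^\infty(\Omega)$. Moreover $v\in BV(\Omega)$ with $Dv=\varphi Du+u\nabla\varphi\,dx$, so
$$a(x)|Dv|\le \|\varphi\|_\infty a(x)|Du|+\|u\|_\infty\|\nabla\varphi\|_\infty a(x)\,dx.$$
Both terms are finite measures, since $u\in BV(\Omega,a)$ and $a\in L^1(\Omega)$. Hence $v\in BV(\Omega,a)\cap L^\infty(\Omega)$. Since $v$ has compact support in $\Omega$, its trace vanishes on $\partial\Omega$, and the boundary term disappears. Proposition \ref{green-1} therefore gives
$$\int_\Omega \varphi u\,\dive(a(x)z)\,dx+\int_\Omega a(x)(z,D(\varphi u))=0.$$

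Next I need the Leibniz rule for the pairing,
$$(z,D(\varphi u))=\varphi(z,Du)+u\,z\cdot\nabla\varphi\,dx\qquad\text{as measures}.$$
This holds because $\dive z\in L^1(\Omega)$ and $u,\varphi u\in BV(\Omega)\cap L^\infty(\Omega)$. To prove it, I test both sides against $\psi\in C_c^\infty(\Omega)$ and use Anzellotti's definition of the pairing, both for $(z,D(\varphi u))$ and for $(z,Du)$ tested with $\varphi\psi$. Writing $\nabla(\varphi\psi)=\varphi\nabla\psi+\psi\nabla\varphi$, the two expressions agree. This is the same computation as in \cite[Proposition 2.3]{MS}, underlying Lemma \ref{sacar}.

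Multiplying the Leibniz rule by $a(x)$ and integrating yields
$$\int_\Omega a(x)(z,D(\varphi u))=\int_\Omega \varphi a(x)(z,Du)+\int_\Omega u\,a(x)\,z\cdot\nabla\varphi\,dx.$$
Substituting this into the identity from Proposition \ref{green-1} gives exactly the claimed formula.

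The main obstacle is justifying this integration against the weight $a$, which is possibly unbounded. The measure $|(z,Du)|$ satisfies $|(z,Du)|\le\|z\|_\infty|Du|$, so $a$ is $|(z,Du)|$-summable. The function $a\,u\,z\cdot\nabla\varphi$ lies in $L^1$ because $a\in L^1(\Omega)$. The identity of measures integrates against the precise representative of $a$, which is defined $\mathcal H^{N-1}$-a.e.

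If there is any doubt about integrating an unbounded function against a measure identity, I will fall back on the truncation used in the proof of Proposition \ref{green-1}. I replace $a$ by $T_k(a)$ and apply Lemma \ref{sacar} with $w=T_k(a)\in W^{1,1}(\Omega)\cap L^\infty(\Omega)$. Then I let $k\to\infty$, using dominated convergence with the dominations $a|(z,Du)|$, $a\,|\dive z|$ and $|\nabla a|$, all of which are summable.
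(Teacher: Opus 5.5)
Your proof is correct, but its main line differs from the paper's. The paper never uses the Green formula here. It tests the Anzellotti pairing $(T_k(a)z,Du)$ directly against $\varphi$, which is legitimate because $\dive(T_k(a)z)=T_k(a)\dive z+z\cdot\nabla T_k(a)\in L^1(\Omega)$. It then replaces $(T_k(a)z,Du)$ by $T_k(a)(z,Du)$ via Lemma \ref{sacar} and lets $k\to\infty$ with the same dominations as in Proposition \ref{green-1}. That is exactly your fallback.

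Your main line instead applies Proposition \ref{green-1} as a black box to $\varphi u$, whose trace vanishes. This reduces the claim to the Leibniz identity $(z,D(\varphi u))=\varphi(z,Du)+u\,z\cdot\nabla\varphi\,dx$, which follows from Anzellotti's definition exactly as you compute. What this buys you is that no new limiting argument is needed, since the truncation is done once, inside Proposition \ref{green-1}. The price is that an interior identity is routed through the boundary machinery (weak normal traces and \cite[Lemma 5.6]{caselles}) only for the boundary term to vanish, whereas the paper's route is purely local.

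The step you flag as the main obstacle is not really one. The Leibniz rule is an identity of finite Radon measures, so you may integrate it against any Borel function summable with respect to the total variations involved. The precise representative of $a$ is well defined $|Du|$-a.e., because $|Du|$ does not charge $\mathcal H^{N-1}$-null sets. It is summable for the following measures:
\begin{itemize}
\item $|(z,Du)|\le\|z\|_\infty|Du|$;
\item $|(z,D(\varphi u))|\le\|z\|_\infty\left(|\varphi|\,|Du|+|u|\,|\nabla\varphi|\,dx\right)$;
\item Lebesgue measure.
\end{itemize}
So the fallback is never needed.
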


 \begin{proof}
     By Lemma \ref{sacar}
     $$\int_\Omega\varphi T_k(a(x)) (z,Du)=-\int_\Omega \varphi  u \dive (T_k(a(x))z)\, dx-\int_\Omega  u T_k(a(x)) z\cdot\nabla \varphi\, dx$$
     holds for all $k>0$.
     The result follows by taking the limit when $k$ goes to $\infty$ and it can be done as in the proof of Proposition \ref{green-1}.
 \end{proof}

  \section{Weighted $p$-Laplacian problems}
\label{weighted_plaplaciano}
  \subsection{Existence and main features}
  \begin{defn}
We say that $\lambda$ is an eigenvalue of \eqref{elliptic_problem} if there exists a nontrivial $u_p \in W_0^{1,p}(\Omega, a)$ which is a weak solution to \eqref{elliptic_problem}, that is,
\begin{equation}
\label{weak_formula}
    \int_{\Omega}a(x)\abs{\nabla u_p}^{p-2}\nabla u_p\cdot \nabla \varphi \:dx=\lambda\int_{\Omega}b(x)\abs{u_p}^{p-2}u_p\varphi \: dx.
\end{equation}
for every $\varphi\in  W_0^{1,p}(\Omega,a)$.
  \end{defn}

In \cite{Drabek} the following existence result is proved.
 \begin{prop}
     There exists the least eigenvalue $\lambda_p(\Omega,a,b)>0$ whose corresponding eigenfunction $u_p \in W^{1,p}(\Omega,a)$ is nonnegative.
     This eigenvalue is given by \eqref{autoval} and $u_p$ is a minimizer.
 \end{prop}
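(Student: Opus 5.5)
The plan is the direct method in the calculus of variations applied to the Rayleigh quotient in \eqref{autoval}, followed by the Euler--Lagrange equation and a sign argument. The compactness needed comes from the hypotheses in Remark \ref{ipo-p}, i.e.\ the framework of \cite{Drabek}.

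\emph{Step 1: compact embedding.} We first check that $W_0^{1,p}(\Omega,a)$ embeds compactly into $L^q(\Omega)$ for some $q$ with $b|v|^p$ integrable. By \eqref{A2}, $\int_\Omega|\nabla v|^p\le \mu^{-1}\int_\Omega a|\nabla v|^p$, so $W_0^{1,p}(\Omega,a)\hookrightarrow W_0^{1,p}(\Omega)$ continuously. (More generally one uses H\"older with $a^{-1}\in L^s$ to land in $W_0^{1,p_s}$, $p_s=ps/(s+1)$.) By Rellich, $W_0^{1,p}(\Omega)$ embeds compactly into $L^{q}$ for all $q<p^*$. Since $b\in L^r$ with $r>N$, the conjugate exponent satisfies $r'p<p^*$; indeed $1/r'>1-1/N$ gives $r'p<Np/(N-1)\le p^*$ when $p<N$. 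Hence $v\mapsto\int_\Omega b|v|^p$ is weakly continuous on $W_0^{1,p}(\Omega,a)$. For $p\ge N$ this is easier.

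\emph{Step 2: positivity and existence of a minimizer.} For positivity, H\"older and Poincar\'e give
\[
\int_\Omega b|v|^p\le \|b\|_r\|v\|_{pr'}^p\le C\int_\Omega a|\nabla v|^p ,
\]
so $\lambda_p\ge 1/C>0$. Next take a minimizing sequence $v_n$ with $\int_\Omega b|v_n|^p=1$. It is bounded in the reflexive space $W_0^{1,p}(\Omega,a)$, which is a uniformly convex weighted $L^p$ setting. Extract a weak limit $u$. Step 1 preserves the constraint, and the norm is weakly lower semicontinuous, so $u$ attains $\lambda_p$.

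\emph{Step 3: sign and Euler--Lagrange equation.} Replacing $u$ by $|u|$ leaves both integrals unchanged, since $|\nabla|u||=|\nabla u|$ a.e. So we may take $u_p=|u|\ge0$. Differentiating $t\mapsto \int_\Omega a|\nabla(u_p+t\varphi)|^p-\lambda_p\int_\Omega b|u_p+t\varphi|^p$ at $t=0$ yields \eqref{weak_formula} with $\lambda=\lambda_p$. The differentiation under the integral is justified by dominated convergence, using the bounds from Step 1.

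\emph{Step 4: least eigenvalue.} If $\lambda$ is an eigenvalue with eigenfunction $v$, test \eqref{weak_formula} with $\varphi=v$. This gives $\int_\Omega a|\nabla v|^p=\lambda\int_\Omega b|v|^p$. Here $\int_\Omega b|v|^p>0$, since otherwise $v=0$; hence $\lambda\ge\lambda_p$.

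The main obstacle I expect is Step 1, namely making the compactness rigorous with $b$ unbounded while matching the exponent conditions in Remark \ref{ipo-p}. Everything else is routine, so I would simply invoke \cite[Chapter 3]{Drabek} for that embedding.
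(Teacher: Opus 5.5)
Your argument is correct, but it takes a different route from the paper, which does not prove this proposition at all: it quotes it from \cite[Chapter 3]{Drabek}. Remark \ref{ipo-p} is included precisely to check that the hypotheses \eqref{eq:A-p} of that book hold.

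The book works only under $a^{-1}\in L^s(\Omega)$. There the H\"older step $\int_\Omega|\nabla v|^{p_s}\le\|a^{-1}\|_s^{s/(s+1)}\bigl(\int_\Omega a|\nabla v|^p\bigr)^{s/(s+1)}$, with $p_s=ps/(s+1)$, is genuinely needed, and the restriction $r>Ns/(ps-N)$ is exactly what makes $pr'<p_s^*$. Your route instead uses the stronger assumption \eqref{A2} of this paper. Under it, $W_0^{1,p}(\Omega,a)\hookrightarrow W_0^{1,p}(\Omega)$ is immediate, and Rellich--Kondrachov together with $pr'<Np/(N-1)\le p^*$ gives both the compactness and the positivity of $\lambda_p$.

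So your Step 1 is already rigorous as written, and your closing plan to invoke \cite{Drabek} for the embedding is unnecessary. What the citation buys the paper is generality, namely weights with only $a^{-1}\in L^s$. What your version buys is a self-contained direct-method proof adapted to \eqref{A2}--\eqref{A3}.

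Two routine points deserve a line each in a written version:
\begin{itemize}
\item $W_0^{1,p}(\Omega,a)$ is defined as a completion and $a$ may be unbounded, so you should justify that $|u|$ belongs to it. Mollify $|\varphi_n|$ for $\varphi_n\in C_c^\infty(\Omega)$, using $a\in L^1$ and dominated convergence, and then pass to a weak limit.
\item The Euler--Lagrange step is legitimate because $u_p$ minimizes $F(v)=\int_\Omega a|\nabla v|^p-\lambda_p\int_\Omega b|v|^p\ge0$ on the whole space, not only on the constraint set. Hence $t\mapsto F(u_p+t\varphi)$ attains its minimum at $t=0$ for every $\varphi\in W_0^{1,p}(\Omega,a)$.
\end{itemize}
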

 
 Now, we recall some properties of the first eigenvalue and the associated eigenfunction. The the standard case is addressed in \cite{anane} and \cite{lindqvist0}, while the weighted problem is studied in \cite{Drabek}.
 \begin{thm}
     Let $u_p$ be an eigenfunction associated to $\lambda_p(\Omega,a,b)$.  Then the following conditions hold:     
     \begin{enumerate}
         \item $u_p\in L^{\infty}(\Omega)$,
         \item $\lambda_p(\Omega,a,b)$ is simple, which means, there is just a nonnegative eigenfunction satisfying $\int_\Omega b(x)|u_p|^p\, dx=1$.
     \end{enumerate}
     
 \end{thm}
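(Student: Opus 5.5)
Under the standing assumptions \eqref{eq:A}, Remark \ref{ipo-p} shows that all the hypotheses of \cite{Drabek} hold for every $p>1$. The plan is therefore to run the classical arguments for the unweighted case (Moser iteration for boundedness, and Lindqvist's convexity argument \cite{lindqvist0} for simplicity), checking at each step that the weights are handled by $a\ge\mu$ and $b\in L^r$ with $r>N$.

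\emph{Boundedness.} We may assume $u_p\ge0$: $|u_p|$ is also a minimizer of \eqref{autoval}, hence an eigenfunction. Since $a\ge\mu$, we have $W_0^{1,p}(\Omega,a)\hookrightarrow W_0^{1,p}(\Omega)$, so Sobolev's inequality gives $\mu^{1/p}\|v\|_{p^*}\le C\big(\int_\Omega a|\nabla v|^p\big)^{1/p}$ (for $p\ge N$, replace $p^*$ by any large finite exponent).
\begin{itemize}
\item Test \eqref{weak_formula} with $\varphi=T_k(u_p)^{\beta p+1}$, which is admissible because it is a bounded Lipschitz function of $u_p$ vanishing at $0$, and let $k\to\infty$ by monotone convergence.
\item The left side controls $c(\beta)\int_\Omega a|\nabla (u_p^{\beta+1})|^p\ge c\,\mu\,\|u_p^{\beta+1}\|_{p^*}^p$.
\item By H\"older with $b\in L^r$, the right side is bounded by $\lambda\|b\|_r\|u_p^{\beta+1}\|_{pr'}^p$.
\item Since $r>N\ge N/p$, we have $pr'<p^*$, so the exponent improves by the fixed factor $\chi=p^*/(pr')>1$ at each step.
\end{itemize}
Iterating $\beta+1=\chi^n$ and tracking the constants, which grow only polynomially in $\beta$, yields $\|u_p\|_\infty\le C\|u_p\|_{p}$-type bounds. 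This is the standard Moser/Stampacchia scheme, as in \cite{Drabek}. An alternative is Stampacchia's level-set method with $\varphi=(u_p-k)^+$, using $b\in L^r$ with $r>N/p$.

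\emph{Positivity.} To compare two eigenfunctions we need them strictly positive and locally bounded below. Nonnegative minimizers are weak supersolutions of $-\dive(a|\nabla u|^{p-2}\nabla u)\ge0$. With $a$ bounded below but only $W^{1,1}$, so not bounded above, the usual Harnack inequality is not directly available, and I would invoke \cite{Drabek}'s weak Harnack inequality for this weighted class. An alternative avoids pointwise positivity entirely, as described next.

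\emph{Simplicity.} Let $u,v\ge0$ be two normalized eigenfunctions with $\int_\Omega b u^p=\int_\Omega b v^p=1$. Set $w=\big(\tfrac{u^p+v^p}{2}\big)^{1/p}$. Then $\int_\Omega b w^p=1$, and the hidden convexity inequality
\[
|\nabla w|^p\le \tfrac12|\nabla u|^p+\tfrac12|\nabla v|^p,
\]
which holds pointwise where $u+v>0$, shows that $w$ is admissible in \eqref{autoval} with energy at most $\lambda_p$. Hence $w$ is a minimizer, and equality must hold a.e. Strict convexity of $|\cdot|^p$ then forces $\nabla\log u=\nabla\log v$ a.e. on $\{u>0,v>0\}$. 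Combined with positivity (or with connectedness of $\Omega$ and the fact that both vanish only on null sets), this gives $u=cv$, and the normalization yields $c=1$. The main obstacle is making this rigorous near the zero sets, i.e.\ justifying the chain rule for $w$ and obtaining positivity from the weak Harnack inequality under a merely $W^{1,1}$ weight $a$. There I would rely on \cite{Drabek}'s results rather than reprove them.
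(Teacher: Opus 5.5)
The paper gives no proof of this statement; it imports it from \cite{Drabek}, and from \cite{anane,lindqvist0} in the unweighted case. Your boundedness half is fine and self-contained:
\begin{itemize}
\item $a\ge\mu$ gives the unweighted Sobolev inequality, and no upper bound on $a$ is ever used, since the global test functions $T_k(u_p)^{\beta p+1}$ need no cut-offs;
\item $r>N>N/p$ gives $pr'<p^*$, so the Moser iteration closes with $\|u_p\|_\infty\le C\|u_p\|_{pr'}$;
\item the reduction to $u_p\ge 0$ is legitimate, because testing with $u_p$ shows that every eigenfunction of $\lambda_p$ is a minimizer.
\end{itemize}

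The gap is in the simplicity half. The hidden-convexity step is correct, but it only yields $\nabla\log u=\nabla\log v$ a.e.\ on $\{u>0\}\cap\{v>0\}$. To conclude $u=cv$ you need $u$ and $v$ positive and locally bounded away from $0$ on a connected $\Omega$, so that $u/v\in W^{1,1}_{\mathrm{loc}}(\Omega)$ has zero gradient.

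The ``alternative'' you mention (connectedness plus zero sets of measure zero) does not remove this requirement, because a null zero set can still disconnect. On the unit ball, $u=|x_1|(1-|x|^2)$ and $v=(x_1^++2x_1^-)(1-|x|^2)$ are positive off a hyperplane and satisfy $\nabla\log u=\nabla\log v$ a.e., yet they are not proportional. So a strong minimum principle, i.e.\ a weak Harnack inequality for nonnegative supersolutions of $-\dive(a|\nabla u|^{p-2}\nabla u)=0$, is indispensable.

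That is exactly what the standing hypotheses do not supply. The assumption $a\in W^{1,1}(\Omega)$ only gives $a\in L^{N/(N-1)}(\Omega)$. The Trudinger-type weak Harnack inequalities for such non-uniformly elliptic operators are proved under a condition of the type $a\in L^t_{\mathrm{loc}}$ with $t>N/p$. That condition can fail when $N\ge 3$ and $p<N-1$, for example with $a=|x|^{-\alpha}$ and $p\le\alpha<N-1$, which is precisely the regime $p\to1$ the paper cares about. Consistently with this, the paper remarks right after the statement that $u_p$ might vanish inside $\Omega$, so it provides no positivity for you to lean on.

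To turn the simplicity part into a proof you must do one of two things. Either add a hypothesis that makes a weak Harnack inequality available, e.g.\ $a\in L^\infty_{\mathrm{loc}}(\Omega)$ or $a\in L^t_{\mathrm{loc}}(\Omega)$ with $t>N/p$. Or cite a precise result in \cite{Drabek} giving $u_p>0$ with local lower bounds under the paper's assumptions. A generic appeal to \cite{Drabek} leaves the decisive step unproved.
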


 We remark that $u_p$ might vanish inside $\Omega$.
  
  \subsection{The limit of eigenvalues as $p$ goes to 1}
 Next, we want to analyze the behavior of $\lambda_p(\Omega,a,b)$ as $p \to 1$. Nevertheless, we are not able to directly show that there exists $\lim_{p\to1}\lambda_p(\Omega,a,b)$ since the arguments employed in the isotropic setting do not hold. Instead, we are considering a sequence of exponents $\{p_m\}$ such that $p_m>1$, $p_m\to1$ and $\lim_{m\to\infty}\lambda_{p_m}=\Lambda$ exists. Our subsequent concern will be to study a condition which guarantees that this limit does not depend on the chosen sequence (see Subsection \ref{Sec-5.2}). The first step is to make sure that the family $\{\lambda_p\}$ is bounded; it is due to the following technical result.
 \begin{lem}
 \label{boundedness}
     There exist $p_0\in(1,2)$ and $C>0$ fulfilling 
     $0\le \lambda_p\le C$ for all $1<p<p_0$.
 \end{lem}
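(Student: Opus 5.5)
Since $\lambda_p$ is the infimum \eqref{autoval}, an upper bound follows from evaluating the Rayleigh quotient at one fixed test function, chosen independently of $p$. Nonnegativity is immediate because both integrals in \eqref{autoval} are nonnegative. Fix a nonnegative $\varphi\in C_c^\infty(\Omega)$ with
\[
\int_\Omega b(x)\varphi\,dx>0.
\]
Such a $\varphi$ exists because $b\ge0$ does not vanish identically, so $b$ is positive on a set of positive measure. Choose a compact set $K\subset\Omega$ on which $b>0$ on a set of positive measure, and take $\varphi\ge0$ with $\varphi\equiv1$ on $K$. This $\varphi$ lies in $W_0^{1,p}(\Omega,a)$ for every $p>1$, since it belongs to $C_c^\infty(\Omega)$. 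The normalized function $\varphi/\left(\int_\Omega b\varphi^p\,dx\right)^{1/p}$ is therefore admissible in \eqref{autoval}, which gives
\[
\lambda_p\le \frac{\int_\Omega a(x)|\nabla\varphi|^p\,dx}{\int_\Omega b(x)\varphi^p\,dx}.
\]

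For the numerator, set $M=\max(1,\|\nabla\varphi\|_\infty)$. For $1<p<2$ we have $|\nabla\varphi|^p\le M^p\le M^2$, so the numerator is at most $M^2\|a\|_{L^1(\Omega)}$. This is finite by \eqref{A1}.

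The main point is the uniform lower bound on the denominator. I would replace $\varphi$ by a function taking only the values $0$ and $1$ on the relevant set, or argue directly as follows. Put $A=\{\varphi\ge1\}\supset K$. Then $\varphi^p\ge 1$ on $A$ for every $p>1$, hence
\[
\int_\Omega b\varphi^p\,dx\ge\int_K b\,dx=:\delta>0,
\]
independently of $p$. Here $b\in L^1$ by \eqref{A3}, since $\Omega$ is bounded. Combining the two bounds gives $\lambda_p\le M^2\|a\|_{L^1}/\delta=:C$ for all $p\in(1,p_0)$, where any $p_0\in(1,2)$ works, for example $p_0=3/2$.

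The only genuinely delicate step is making sure that the test function has $\varphi\ge1$ on a set where $b$ has positive integral, so that the denominator does not degenerate. As an alternative, one could use dominated convergence: $\int_\Omega b\varphi^p\,dx\to\int_\Omega b\varphi\,dx>0$ as $p\to1$, with $b\varphi^p\le b\max(1,\|\varphi\|_\infty)^2$. This gives a positive lower bound for $p$ close to $1$, and that is the reason the statement is restricted to $p<p_0$.
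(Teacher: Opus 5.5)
Your proof is correct. It uses the same overall strategy as the paper: evaluate the Rayleigh quotient in \eqref{autoval} at one fixed test function and bound numerator and denominator uniformly in $p$. The difference lies in how the denominator is controlled.

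The paper takes an arbitrary $u_0\in W_0^{1,2}(\Omega,a)$ and applies Young's inequality on both sides. Above, it uses $|\nabla u_0|^p\le\frac p2|\nabla u_0|^2+\frac{2-p}{2}$, the counterpart of your $t^p\le\max(1,t)^2$. Below, it uses $|u_0|^p\ge p|u_0|-(p-1)$, which gives $\int_\Omega b|u_0|^p\,dx\ge p\int_\Omega b|u_0|\,dx-(p-1)\int_\Omega b\,dx$. This is why the paper must choose $p_0$ close to $1$, depending on $u_0$, through $(p_0-1)\int_\Omega b\,dx\le\frac12\int_\Omega b|u_0|\,dx$. It also tacitly needs $\int_\Omega b|u_0|\,dx>0$, which it does not state.

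Your cutoff with $\varphi\equiv1$ on a compact $K$ satisfying $\int_K b\,dx>0$ gives $\varphi^p\ge\chi_K$ for every $p$, so the lower bound is immediate. As a result, your constant $C=M^2\|a\|_{L^1(\Omega)}/\delta$ holds on the whole range $1<p<2$, and you justify explicitly why the denominator cannot degenerate. The paper's version accepts any test function, at the price of a $u_0$-dependent $p_0$. Your dominated-convergence alternative makes the same trade-off, in non-quantitative form.
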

 \begin{proof}
 Choose $u_0\in W_0^{1,2}(\Omega, a)$ and recall that the continuous imbedding $W_0^{1,2}(\Omega, a)\hookrightarrow W_0^{1,p}(\Omega, a)$ holds for all $1\le p\le 2$. Now fix $p_0\in(1,2)$ satisfying 
 \[(p_0-1) \int_\Omega b(x)\, dx\le \frac12  \int_\Omega b(x)|u_0|\, dx\,.\]
 Then, for all $1<p<p_0$, apply Young's inequality and get
 \begin{multline*}
 \frac{\displaystyle \int_\Omega a(x)|\nabla u_0|^pdx}{\displaystyle \int_\Omega b(x)|u_0|^pdx} \le 
  \frac{\displaystyle \frac p2\int_\Omega a(x)|\nabla u_0|^2dx+\frac{2-p}{p}\int_\Omega a(x)\, dx}{\displaystyle p\int_\Omega b(x)|u_0|\, dx-(p-1)\int_\Omega b(x)\, dx}\\[2mm]
  \le  \frac{\displaystyle \int_\Omega a(x)|\nabla u_0|^2dx+\int_\Omega a(x)\, dx}{\displaystyle \frac12\int_\Omega b(x)|u_0|\,dx}.
 \end{multline*}
 Hence,
 \[\lambda_p\le \frac{\displaystyle \int_\Omega a(x)|\nabla u_0|^2dx+\int_\Omega a(x)\, dx}{\displaystyle \frac12\int_\Omega b(x)|u_0|\,dx}\]
   and we are done.
 \end{proof}
 
 In what follows we normalize the $p$-eigenfunctions assuming that 
 $$\int_{\Omega}b(x)\abs{u_p}^{p}\;dx=1\,,\qquad 1<p<p_0\,.$$
 The next step is to get a $BV$-estimate for these $p$-eigenfunctions $u_p$.  

  \begin{prop}\label{estimation}
  There exists $M>0$ satisfying
\begin{equation}\label{BV-bound}
\int_{\Omega}a(x)\abs{\nabla u_p}^{p}\;dx \leq M\,,\qquad 1<p<p_0\,.
\end{equation} 
  \end{prop}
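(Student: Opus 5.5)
The estimate follows directly from the variational characterization \eqref{autoval} and the normalization of the eigenfunctions. I plan to test the weak formulation \eqref{weak_formula} with $\varphi=u_p$ itself. This choice is admissible because $u_p\in W_0^{1,p}(\Omega,a)$. It gives
\[
\int_\Omega a(x)|\nabla u_p|^p\,dx=\lambda_p\int_\Omega b(x)|u_p|^p\,dx=\lambda_p ,
\]
where the last equality uses the normalization $\int_\Omega b(x)|u_p|^p\,dx=1$ fixed just before the statement. Equivalently, $u_p$ is a minimizer of \eqref{autoval}, so its energy equals $\lambda_p$.

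The second step is to invoke Lemma~\ref{boundedness}, which gives $p_0\in(1,2)$ and $C>0$ with $0\le\lambda_p\le C$ for all $1<p<p_0$. Combining this with the identity above yields \eqref{BV-bound} with $M=C$.

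I do not see a genuine obstacle here. The only care needed is that the testing is legitimate: the weak formulation holds for every $\varphi\in W_0^{1,p}(\Omega,a)$, and both sides are finite. The right-hand side is finite because the normalization makes sense, which by Remark~\ref{ipo-p} and the embedding theory of \cite{Drabek} amounts to $b|u_p|^p\in L^1(\Omega)$. One should also use the same $p_0$ as in Lemma~\ref{boundedness}.

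The name \eqref{BV-bound} suggests a BV consequence that will be needed later. By Young's inequality, $a|\nabla u_p|\le \frac1p a|\nabla u_p|^p+\frac{p-1}{p}a$, so
\[
\int_\Omega a(x)|\nabla u_p|\,dx\le M+\int_\Omega a(x)\,dx .
\]
The right-hand side is finite since $a\in W^{1,1}(\Omega)\subset L^1(\Omega)$.
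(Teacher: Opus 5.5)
Your proposal is correct and is essentially the paper's proof. You test with $u_p$ (equivalently, use that $u_p$ minimizes \eqref{autoval}) to get $\int_\Omega a(x)|\nabla u_p|^p\,dx=\lambda_p$ under the normalization, and then bound $\lambda_p$ uniformly for $1<p<p_0$ by Lemma~\ref{boundedness}; your closing Young's-inequality remark is exactly how the estimate is used later in Proposition~\ref{limit1}.
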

   \begin{proof} 
   The  thesis follows from 
   $$\int_{\Omega}a(x)\abs{\nabla u_p}^{p}dx=\lambda_p\int_{\Omega}b(x)\abs{u_p}^pdx=\lambda_p,$$
   owing to Lemma \ref{boundedness}.
    \end{proof} 

 We point out that the arguments developed in the isotropic case (see \cite{lindqvist2}) allow us to check the existence of $\lim_{p\to1}\lambda_p$ under the strong assumption $a(x)\le \mathcal C b(x)$ for some constant $\mathcal C>0$. This fact provides evidence that it is reasonable to think that it should be true in the general framework.
 
\begin{prop}\label{truco}
Assume there exists $\mathcal C>0$ such that $a(x)\le \mathcal C b(x)$. 

If $1<p<s$, then $p \, \mathcal{C}^{-1/p}\lambda_p^{\frac{1}{p}}\leq s \, \mathcal{C}^{-1/s}\lambda_s^{\frac{1}{s}}$.
 \end{prop}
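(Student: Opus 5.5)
The plan is to adapt Lindqvist's isotropic argument: insert the power $v=u_s^{s/p}$ of the $s$-eigenfunction as a test function in the Rayleigh quotient \eqref{autoval} for $\lambda_p$, and use the pointwise condition $a\le\mathcal C b$ to convert an $a$-weighted integral into a $b$-weighted one. Let $u_s\ge0$ be the normalized first eigenfunction for the exponent $s$, so that $\int_\Omega b\,u_s^s\,dx=1$ and $\int_\Omega a|\nabla u_s|^s\,dx=\lambda_s$. Since $u_s\in L^\infty(\Omega)$ and $s/p>1$, the function $v$ lies in $W_0^{1,p}(\Omega,a)$; this can be checked by approximation, using that $t\mapsto t^{s/p}$ is Lipschitz on bounded sets.

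\textbf{Step 1.} We have $\nabla v=\frac sp u_s^{\frac sp-1}\nabla u_s$ and $\int_\Omega b|v|^p\,dx=\int_\Omega b\,u_s^s\,dx=1$. Hence
\[
\lambda_p\le \Big(\frac sp\Big)^p\int_\Omega a\,u_s^{s-p}|\nabla u_s|^p\,dx .
\]

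\textbf{Step 2.} Apply H\"older's inequality with exponents $s/p$ and $s/(s-p)$ to the measure $a\,dx$:
\[
\int_\Omega a\,u_s^{s-p}|\nabla u_s|^p\,dx\le\Big(\int_\Omega a|\nabla u_s|^s\,dx\Big)^{p/s}\Big(\int_\Omega a\,u_s^s\,dx\Big)^{(s-p)/s}.
\]
The first factor equals $\lambda_s^{p/s}$. For the second factor, the hypothesis $a\le\mathcal C b$ gives $\int_\Omega a\,u_s^s\,dx\le\mathcal C\int_\Omega b\,u_s^s\,dx=\mathcal C$. Therefore
\[
\lambda_p\le\Big(\frac sp\Big)^p\lambda_s^{p/s}\,\mathcal C^{(s-p)/s}.
\]

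\textbf{Step 3.} Take $p$-th roots and multiply by $p$:
\[
p\,\lambda_p^{1/p}\le s\,\lambda_s^{1/s}\,\mathcal C^{\frac1p-\frac1s}.
\]
Dividing by $\mathcal C^{1/p}$ yields exactly $p\,\mathcal C^{-1/p}\lambda_p^{1/p}\le s\,\mathcal C^{-1/s}\lambda_s^{1/s}$.

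The only delicate point is the admissibility of $u_s^{s/p}$ in the weighted space $W_0^{1,p}(\Omega,a)$. This uses the boundedness of $u_s$ from the earlier theorem together with the facts that $a\in L^1(\Omega)$ and $W_0^{1,s}(\Omega,a)\hookrightarrow W_0^{1,p}(\Omega,a)$. Concretely, one approximates $u_s$ by $C_c^\infty$ functions $\phi_n$ bounded uniformly in $L^\infty$, and applies a smooth function $G$ agreeing with $t\mapsto|t|^{s/p}$ on the relevant bounded range; the compositions $G(\phi_n)$ are in $C_c^1$ and converge to $u_s^{s/p}$ in the weighted norm. Everything else is a direct computation.
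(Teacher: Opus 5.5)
Your proof is correct and is essentially the paper's argument. The paper applies $\psi(u)=|u|^{s/p-1}u$ to every $u$ with $\int_\Omega b|u|^s\,dx=1$, uses the same H\"older inequality with exponents $s/p$ and $s/(s-p)$ together with $\int_\Omega a|u|^s\,dx\le\mathcal C$, and then takes the infimum; inserting the minimizer $u_s$ directly, as you do, is equivalent. Your justification that $u_s^{s/p}\in W_0^{1,p}(\Omega,a)$ fills a step the paper skips, with one small caveat: when $s/p<2$, $G$ can only be taken $C^1$ (not $C^\infty$) near $0$, but $C^1$ is all your argument actually uses.
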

 \begin{proof}
Let $\psi: W^{1,s}(\Omega, a) \rightarrow W^{1,p}(\Omega,a)$ be defined as $$u\in W^{1,s}(\Omega, a)\rightarrow \abs{u}^{\frac{s}{p}-1}u\in W^{1,p}(\Omega,a).$$ 
     Denoting
     $$\mathcal{M}_s=\left\{u\in W^{1,s}(\Omega,a):\int_{\Omega}b(x)\abs{u}^s=1\right\},$$
   we have that $u \in \mathcal{M}_s$ implies $\psi (u) \in \mathcal{M}_p$, indeed if $u\in \mathcal{M}_s$, we have that 
   $$\int_{\Omega}b(x)\abs{\psi(u)}^p\;dx=\int_{\Omega}b(x)\abs{u}^s\;dx=1$$
   and
   \begin{align*}  \int_{\Omega}a(x)\abs{\nabla(\psi(u))}^p\;dx&=\int_{\Omega}a(x)\left(\dfrac{s}{p}\right)^{p}\abs{u}^{\left(\frac{s}{p}-1\right)} \abs{\nabla u}^pdx\\& =\left(\dfrac{s}{p}\right)^{p}\int_{\Omega}a(x)\abs{u}^{s-p}\abs{\nabla u}^pdx\\&\leq \left(\dfrac{s}{p}\right)^{p}\left(\int_{\Omega}a(x)\abs{u}^{s}dx\right)^{\frac{s-p}{s}}\left(\int_{\Omega}a(x)\abs{\nabla u}^sdx\right)^{\frac{p}{s}}.  
   \end{align*}

   Using the definition of the first eigenvalue, we obtain
   \begin{align*}
   \lambda_p^{\frac{1}{p}}=\inf_{w \in \mathcal{M}_p}\left(\int_{\Omega}a(x)\abs{\nabla w}^p\right)^{\frac{1}{p}}&\leq \inf_{u\in \mathcal{M}_s}\left(\int_{\Omega}a(x)\abs{\nabla \psi(u)}^s\right)^{\frac{1}{s}}\\&\leq \inf_{u\in \mathcal{M}_s}\dfrac{s}{p}\left(\int_{\Omega}a(x)\abs{\nabla u}^s\right)^{\frac{1}{s}} \mathcal{C}^{\frac{1}{p}-\frac{1}{s}}\\&=\frac{s}{p}\mathcal{C}^{\frac{1}{p}-\frac{1}{s}}\lambda_s^{1/s}
   \end{align*}
   and the claim follows.
\end{proof}

\section{Limit Problem}
\label{problema_limite_sezione}
\subsection{Main features}
The objective of this Section is to analyze the limit problem that involves the weighted $1$-Laplacian
\begin{equation}
\label{limit_problem_intro2}
\begin{cases}
    -\dive \left(a(x)\frac{Du}{|Du|}\right)=\Lambda b(x) \frac{u}{|u|} &\textrm{in}\; \Omega\\
    u=0 &\textrm{on}\; \partial \Omega.
  
\end{cases}
  \end{equation}
 
   \begin{defn}
  \label{sollimite}
  We say that $\Lambda$ is an eigenvalue of \eqref{limit_problem_intro2} if there exists a nontrivial $u \in BV(\Omega, a)$ which is a solution to \eqref{limit_problem_intro2} in the sense that there exist $z\in L^{\infty}(\Omega,\R^N)$ and $\gamma \in L^\infty(\Omega)$  such that $\norm{z}_{\infty}\leq 1$, $\gamma \in sign(u)$ and the following conditions hold
  \begin{enumerate}
      \item $-\dive(a(x)z)=\Lambda b(x) \gamma$ in the sense of distributions;
      \item $(z,Du)=\abs{Du}$ as measures;
      \item $-[z,\nu]\in \sign(u)$, $\mathcal H^{N-1}$-a.e. on $\partial \Omega$.
  \end{enumerate}
  In this case, $u$ is said to be an eigenfunction of \eqref{limit_problem_intro2} associated to $\Lambda$.
  \end{defn}
  
  Observe that $\Lambda b(x) \gamma\in L^r(\Omega)$, with $r>N$, so that $\dive(a(x)z)\in L^N(\Omega)$. Hence, to apply Green's formula, we require either $v\in BV(\Omega)\cap L^\infty(\Omega)$ (Proposition \ref{green-1}), or $a(x)|v|\in L^1(\partial\Omega)$ (Proposition \ref{green-2}). If $v$ satisfies one of these assumptions, then the first condition of Definition \ref{sollimite} yields 
  \[\int_\Omega a(x) (z,Dv)-\int_{\partial \Omega}v[z,\nu]\, d\mathcal H^{N-1}=\Lambda \int_{\Omega}b(x)\gamma v\, dx.\]
  It is not obvious that we may take the solution $u$ as $v$ in the above formula. 
  That is why it is so convenient the following result.

\begin{prop}\label{acot}
      Every solution to \eqref{limit_problem_intro2} is bounded and satisfies
      \[\|u\|_\infty\le \left(\frac{S|\Lambda|\|b\|_r}{\mu} \right)^\frac{Nr}{r-N}\|u\|_1\,,\]
      where $S$ stands for the Sobolev constant of $BV(\Omega,a) \hookrightarrow L^{\frac{N}{N-1}}(\Omega)$.
  \end{prop}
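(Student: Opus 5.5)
The plan is a Stampacchia-type argument. I test the equation with truncations of the level-set function $G_k(u)=u-T_k(u)$ and show that $G_k(u)\equiv 0$ once $k$ is large compared with $\|u\|_1$. Fix $k>0$ and $h>0$, and set $v=T_h(G_k(u))$. Since $v\in BV(\Omega,a)\cap L^\infty(\Omega)$, Proposition \ref{green-1} applies to $v$. Combined with condition (1) of Definition \ref{sollimite}, it gives
\[
\int_\Omega a(x)(z,Dv)-\int_{\partial\Omega}a(x)\,v\,[z,\nu]\,d\mathcal H^{N-1}=\Lambda\int_\Omega b(x)\gamma v\,dx .
\]

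Next I identify each term.
\begin{itemize}
\item Pairing term. $T_h\circ G_k$ is nondecreasing and Lipschitz, and $(z,Du)=|Du|$. By the standard chain-rule property of Anzellotti pairings (as in Andreu--Caselles--Maz\'on), $(z,Dv)=|Dv|$ as measures. By \eqref{A2}, the first term is then at least $\mu\int_\Omega|Dv|$.
\item Boundary term. $v$ has the same sign as $u$ on $\partial\Omega$ (in the trace sense), and $-[z,\nu]\in\sign(u)$. Hence $-v[z,\nu]=|v|$ $\mathcal H^{N-1}$-a.e., and the boundary term is at least $\mu\int_{\partial\Omega}|v|\,d\mathcal H^{N-1}$.
\item Right-hand side. $v$ vanishes outside $A_k=\{|u|>k\}$ and $|\gamma|\le 1$. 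Hölder's inequality with exponents $r$, $N/(N-1)$ and the remaining one bounds it by $|\Lambda|\,\|b\|_r\,|A_k|^{\frac1N-\frac1r}\,\|v\|_{N/(N-1)}$.
\end{itemize}

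I then apply the Sobolev inequality in its form with boundary term (extension by zero as in Theorem \ref{lower_semicontinuity}): $\|v\|_{N/(N-1)}\le S\big(\int_\Omega|Dv|+\int_{\partial\Omega}|v|\big)$. Normalising $S$ as in the statement, this yields
\[
\mu\Big(\int_\Omega|Dv|+\int_{\partial\Omega}|v|\Big)\le S|\Lambda|\,\|b\|_r\,|A_k|^{\frac{r-N}{Nr}}\Big(\int_\Omega|Dv|+\int_{\partial\Omega}|v|\Big).
\]
If $S|\Lambda|\|b\|_r|A_k|^{\frac{r-N}{Nr}}<\mu$, then $v=0$ for every $h$, and letting $h\to\infty$ gives $G_k(u)=0$, that is, $|u|\le k$ a.e.

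Finally, Chebyshev's inequality gives $|A_k|\le\|u\|_1/k$. The smallness condition therefore holds as soon as $k>\|u\|_1\big(S|\Lambda|\|b\|_r/\mu\big)^{\frac{Nr}{r-N}}$. Letting $k$ decrease to this threshold gives the stated estimate.

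The main obstacle is the identification $(z,D\,T_h(G_k(u)))=|D\,T_h(G_k(u))|$ from $(z,Du)=|Du|$ when $u$ is not yet known to be bounded. I would first prove it for $T_M(u)$, where Anzellotti's chain rule applies directly, and then use $T_h(G_k(T_M u))=T_h(G_k u)$ for $M>k+h$. Some care is also needed for the sign of the boundary trace of $v$ relative to $[z,\nu]$; this follows because $T_h\circ G_k$ is odd and nondecreasing, so traces commute with it.
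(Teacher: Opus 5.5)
Your proposal is correct and follows the paper's own Stampacchia-type argument. The paper tests condition (1) with $T_1(G_k(u))$ and gets $(z,DT_1(G_k(u)))=|DT_1(G_k(u))|$ from condition (2) via \cite[Proposition 2.8]{anzellotti}; it then combines H\"older, the Sobolev inequality with boundary term, and Chebyshev's bound $|A_k|\le\|u\|_1/k$ to reach the same threshold. The remaining differences are minor: a general level $h$ is unnecessary since $T_h(G_k(u))=0$ already forces $G_k(u)=0$, you cancel $\int_\Omega|Dv|+\int_{\partial\Omega}|v|$ rather than $\|v\|_{N/(N-1)}$, and you make explicit the truncation step $T_h(G_k(T_M u))=T_h(G_k u)$ for possibly unbounded $u$, which the paper leaves implicit.
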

  \begin{proof}
  To begin with, we denote $A_k=\{|u|>k\}$ for $k>0$. 
      Let us consider the auxiliary function defined as $G_k(s)=(\abs{s}-k)^+\textrm{sign} (s)$, taking $T_1(G_k(u))$ as test function in the first condition of Definition \ref{sollimite}, applying Green's function and employing condition (3), it yields
      \begin{multline*}
          \int_\Omega a(x)(z,DT_1(G_k(u)))+\int_{\partial\Omega}a(x)|T_1(G_k(u))|\, d\mathcal H^{N-1}\\
          \le |\Lambda| \int_\Omega b(x) |T_1(G_k(u))|\, dx.
      \end{multline*}
      Furthermore, condition (2) and \cite[Proposition 2.8]{anzellotti} imply that $(z,DT_1(G_k(u)))=|DT_1(G_k(u))|$ as measures. Hence,
      \begin{multline*}
          \int_\Omega a(x)|DT_1(G_k(u))|+\int_{\partial\Omega}a(x)|T_1(G_k(u))|\, d\mathcal H^{N-1}\\
          \le |\Lambda| \int_\Omega b(x) |T_1(G_k(u))|\, dx\\
          \le |\Lambda|\left(\int_\Omega b(x)^r\right)^{\frac1r}\left(\int_{\Omega}|T_1(G_k(u))|^{\frac{N}{N-1}}\right)^{\frac{N-1}{N}}|A_k|^{\frac{r-N}{Nr}}
      \end{multline*}
      by H\"older's inequality. Now it follows from Sobolev's inequality that
      \begin{multline*}
      \mu\left(\int_\Omega |T_1(G_k(u))|^{\frac{N}{N-1}}\right)^{\frac{N-1}{N}}\\
      \le S\left[ \int_\Omega a(x)|DT_1(G_k(u))|+\int_{\partial\Omega}a(x)|T_1(G_k(u))|\, d\mathcal H^{N-1}\right]\,.
      \end{multline*}
      Gathering the above inequalities, we obtain
      \[\left(\int_\Omega |T_1(G_k(u))|^{\frac{N}{N-1}}\right)^{\frac{N-1}{N}}\le \frac{S |\Lambda| \|b\|_r}{\mu} |A_k|^{\frac{r-N}{Nr}}\left(\int_{\Omega}|T_1(G_k(u))|^{\frac{N}{N-1}}\right)^{\frac{N-1}{N}}\,.\]
      Choose now $k$ such that $k>\left(\frac{S|\Lambda|\|b\|_r}{\mu} \right)^{\frac{Nr}{r-N}}\|u\|_1$, wherewith we have
      \begin{equation}\label{acota1}
      \frac{S|\Lambda| \| b\|_r}{\mu}|A_k|^{\frac{r-N}{Nr}}\le \frac{S|\Lambda| \| b\|_r}{\mu}\frac{\|u\|_1^{\frac{r-N}{Nr}}}{k^{\frac{r-N}{Nr}}}<1
      \end{equation}
      Hence, \eqref{acota1} implies $\int_\Omega|T_1(G_k(u))|^{\frac{N}{N-1}}=0$, so that $|u|\le k$. Taking into account that it holds for every $k>\left(\frac{S|\Lambda|\|b\|_r}{\mu} \right)^{\frac{Nr}{r-N}}\|u\|_1$, we are done.      
  \end{proof}

   \begin{rem}\label{test-sol}
 We point out that, thanks to Proposition \ref{acot}, the solution $u$ can be taken as test function in condition (1) of Definition \ref{sollimite} and applying Green's formula of Proposition \ref{green-1}, we get
 \[\int_\Omega a(x)(z,Du)-\int_{\partial\Omega}a(x)u[z,\nu]\, d\mathcal H^{N-1}=\Lambda\int_\Omega b(x)\gamma u\, dx.\]
 Now our hipotheses on $z$, $[z,\nu]$ and $\gamma$ lead to 
 \begin{equation}\label{eq:test-sol}
     \int_\Omega a(x)|Du|+\int_{\partial\Omega}a(x)|u|\, d\mathcal H^{N-1}=\Lambda\int_\Omega b(x)|u|\, dx.
 \end{equation}
 Several simple consequences of this identity are in order:
 \begin{enumerate}
      \item Every solution $u$ to problem \eqref{limit_problem_intro2} satisfies 
 \[\int_\Omega b(x)|u|\, dx\ne0.\]
 Indeed, recalling assumption $a(x)\ge\mu>0$, we deduce from \eqref{eq:test-sol} that
 \[0<\int_\Omega |Du|+\int_{\partial\Omega}|u|\, d\mathcal H^{N-1}\le\frac{\Lambda}{\mu}\int_\Omega b(x)|u|\, dx\]
 from where our claims follow.
     \item Every eigenvalue $\Lambda$ is positive
     \item If there exists the minimum of
     \[\frac{\int_\Omega a(x)|Du|+\int_{\partial\Omega}a(x)|u|\, d\mathcal H^{N-1}}{\int_\Omega b(x)|u|\, dx},\]
     taken over all $u\in BV(\Omega, a)$ such that $ \int_\Omega b(x)|u|\, dx\ne0$, 
     this value is the first eigenvalue of \eqref{sollimite}

 \end{enumerate}
   \end{rem}

 \subsection{Existence}
 Henceforth $\{p_m\}$ stands for a sequence satisfying $1<p_m<p_0$ and $p_m\to1$.  Moreover, we denote  $\lambda_m=\lambda_{p_m}$ and $u_m=u_{p_m}$, 
and we require that the sequence $\{\lambda_m\}$ converges to a certain value $\Lambda$. We are also assuming that 
 $$\int_{\Omega}b(x)\abs{u_m}^{p_m}dx=1\,,\qquad m\in\N\,.$$

 \begin{prop}\label{limit1}
      There exists a non-negative $u\in BV(\Omega,a)$ such that, up to a subsequence, 
$$u_m \rightarrow u \qquad \textrm{in}\; L^q(\Omega)\,,\quad for\ 1\le q<\frac{N}{N-1}.$$
  \end{prop}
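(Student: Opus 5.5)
The plan is to derive a uniform bound for $\{u_m\}$ in $BV(\Omega,a)$, use compactness in $BV(\Omega)$, and then pass to the limit, with lower semicontinuity and Fatou's lemma ensuring that $u$ lies in $BV(\Omega,a)$.

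\textbf{Uniform estimate.} Each $u_m$ can be taken nonnegative and lies in $W_0^{1,p_m}(\Omega,a)\subset W_0^{1,1}(\Omega)$, so its trace vanishes. By Young's inequality $|\xi|\le \frac{1}{p_m}|\xi|^{p_m}+\frac{p_m-1}{p_m}$ applied pointwise, together with Proposition~\ref{estimation}, we get
\[
\int_\Omega a(x)|\nabla u_m|\,dx\le \int_\Omega a(x)|\nabla u_m|^{p_m}dx+\int_\Omega a(x)\,dx\le M+\|a\|_{L^1(\Omega)}.
\]
Since $a\ge\mu$, this bounds $\int_\Omega|\nabla u_m|$ uniformly. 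Because the $u_m$ have zero trace and $\Omega$ is bounded, Sobolev's (or Poincar\'e's) inequality in $W_0^{1,1}(\Omega)$ then bounds $\|u_m\|_{L^1}$ and even $\|u_m\|_{L^{N/(N-1)}}$. Hence $\{u_m\}$ is bounded in $BV(\Omega,a)$, and in particular in $BV(\Omega)$. Alternatively, one could extend by zero to a larger $\Omega'$, which makes the compactness step cleaner.

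\textbf{Compactness and identification of the limit.} By the compact embedding $BV(\Omega)\hookrightarrow L^q(\Omega)$ for $1\le q<\frac{N}{N-1}$ (recalled in Section~\ref{preliminari} for $BV(\Omega,a)$, and valid for $BV(\Omega)$ on Lipschitz domains), a subsequence converges in $L^1(\Omega)$ and a.e. to some $u$. The uniform $L^{N/(N-1)}$ bound and Vitali's theorem, or interpolation, upgrade this to convergence in every $L^q$ with $q<\frac{N}{N-1}$. Nonnegativity of $u$ passes to the a.e.\ limit. Finally, Theorem~\ref{lower_semicontinuity} gives
\[
\int_\Omega a|Du|+\int_{\partial\Omega}a|u|\,d\mathcal H^{N-1}\le\liminf_m\int_\Omega a|\nabla u_m|\,dx<\infty,
\]
so that $u\in BV(\Omega,a)$.

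\textbf{Main obstacle.} The key step is the uniform $BV$ bound: the exponents $p_m$ vary, so one cannot directly compare different Sobolev spaces, and the Young-inequality trick is what makes it work. This requires $a\in L^1$, which is guaranteed by \eqref{A1}.

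One further point deserves attention: the statement does not claim $u\neq0$. If nontriviality were needed, it would follow from $\int_\Omega b\,u_m^{p_m}=1$ combined with H\"older's inequality, $b\in L^r$, and the $L^q$ convergence, but this is not required here.
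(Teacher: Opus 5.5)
Your proposal is correct and follows essentially the paper's own argument. You use Young's inequality with Proposition~\ref{estimation} to bound $\int_\Omega a|\nabla u_m|\,dx$ uniformly, which gives (via $a\ge\mu$ and zero traces) boundedness in $W^{1,1}(\Omega)$. Compactness then yields strong $L^q$ and a.e.\ convergence, nonnegativity passes to the a.e.\ limit, and lower semicontinuity places $u$ in $BV(\Omega,a)$. Your explicit appeal to the Poincar\'e inequality for the $L^1$ bound simply spells out a step the paper leaves implicit.
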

  \begin{proof}
      It is enough to apply the compact embedding. Indeed, if we take $u_m$ as test function in \eqref{weak_formula} we have that 

$$\int_{\Omega}a(x) \abs{\nabla u_m}^{p_m}\; dx =\int_{\Omega}\lambda_m b(x) \abs{u_m}^{p_m}\;dx.$$
Applying Young's inequality, we have that 
\begin{align*}
    \int_{\Omega}a(x)\abs{\nabla u_m}\;dx&\leq \dfrac{1}{p_m}\int_{\Omega}a(x)\abs{\nabla u_m}^{p_m}\;dx +\dfrac{p_m-1}{p_m}\left(\int_{\Omega}a(x)\right)\\&=\dfrac{M}{p_m}+\dfrac{p_m-1}{p_m}\int_{\Omega}a(x)\;dx,
\end{align*}
where Proposition \ref{estimation} has been used. 
 Hence, 
 $$\mu\left[\int_\Omega|\nabla u_m|\, dx+\int_{\partial\Omega}|u_m|\, d\mathcal H^{N-1}\right]\le \int_{\Omega}a(x)\abs{\nabla u_m}\;dx\le M+\int_{\Omega}a(x)\;dx$$
 and we have that $\{u_m\}$ is bounded in $W^{1,1}(\Omega)$ and then, up to subsequences, $u_m$ converges to a function $u$ strongly in $L^q(\Omega)$ for $1\le q<\frac{N}{N-1}$ and almost everywhere in $\Omega$. The pointwise convergence yields $u\ge0$. For the lower semicontinuity of the weighted total variation, we get $u \in BV (\Omega, a)$.
  \end{proof}
We are proving that this limit can be seen as an eigenfunction of the limit problem \eqref{sollimite}.

\begin{prop}\label{limit2}
    The limit 
    \[b(x)|u_m|^{p_m}\to b(x)|u|\]
    holds strongly in $L^1(\Omega)$.
\end{prop}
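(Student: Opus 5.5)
The plan is to use Vitali's convergence theorem: pointwise a.e. convergence together with uniform integrability of $\{b(x)|u_m|^{p_m}\}$ on the bounded set $\Omega$. Both ingredients should follow from what was established in the proof of Proposition \ref{limit1}, combined with hypothesis \eqref{A3} and H\"older's inequality.

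\emph{Pointwise convergence.} By Proposition \ref{limit1}, after passing to a further subsequence, $u_m\to u$ a.e. in $\Omega$. Since $p_m\to1$ and the map $(t,p)\mapsto |t|^p$ is continuous, we get $|u_m(x)|^{p_m}\to |u(x)|$ at every point where $u_m(x)\to u(x)$. Hence $b(x)|u_m|^{p_m}\to b(x)|u|$ a.e. in $\Omega$. At the end I will argue that the limit does not depend on the subsequence, so the convergence holds along the whole sequence fixed in Proposition \ref{limit1}.

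\emph{Uniform integrability.} In the proof of Proposition \ref{limit1} it was shown that $\{u_m\}$ is bounded in $W^{1,1}(\Omega)$ and that the boundary terms are controlled. Through the embedding $BV(\Omega)\hookrightarrow L^{\frac N{N-1}}(\Omega)$, applied to the zero extension of $u_m$ (equivalently, $W_0^{1,1}$-Sobolev), this gives
\[\sup_m\|u_m\|_{\frac N{N-1}}\le C.\]
Let $r'=\frac r{r-1}$. Since $r>N$, we have $r'<\frac N{N-1}$, so there is $m_0$ with $p_m r'\le \frac N{N-1}$ for all $m\ge m_0$. For any measurable $E\subset\Omega$ and $m\ge m_0$, H\"older's inequality (and boundedness of $\Omega$ to pass from $L^{p_mr'}$ to $L^{\frac N{N-1}}$) yields
\[\int_E b(x)|u_m|^{p_m}\,dx\le \Big(\int_E b^r\Big)^{\frac1r}\,\|u_m\|_{p_m r'}^{p_m}\le \Big(\int_E b^r\Big)^{\frac1r}\,C'.\]
Here $C'$ does not depend on $m$, because $\|u_m\|_{p_mr'}\le |\Omega|^{\frac1{p_mr'}-\frac{N-1}{N}}\|u_m\|_{\frac N{N-1}}$, and both this factor and its $p_m$-th power stay bounded. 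By absolute continuity of the integral of $b^r\in L^1(\Omega)$, the right-hand side is small uniformly in $m$ when $|E|$ is small. This is the required equi-integrability; the finitely many indices $m<m_0$ are harmless.

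Vitali's theorem then gives $b|u_m|^{p_m}\to b|u|$ in $L^1(\Omega)$. Since the limit is identified as $b|u|$ independently of the sub-subsequence chosen, the convergence holds for the whole subsequence of Proposition \ref{limit1}.

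The main obstacle is the uniform-in-$m$ estimate for the varying exponent $p_m r'$. It requires noting that $p_mr'$ eventually stays below the critical Sobolev exponent $\frac N{N-1}$, which is precisely where $r>N$ is used. I will also verify that the $L^{\frac N{N-1}}$ bound really comes from the $W^{1,1}$ bound including the boundary trace, as stated in the proof of Proposition \ref{limit1}.
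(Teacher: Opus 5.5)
Your proof is correct and follows the paper's route: you take a.e. convergence from Proposition \ref{limit1}, obtain equi-integrability from H\"older's inequality using $r'<\frac{N}{N-1}$ (i.e.\ $r>N$), and conclude with Vitali's theorem. The only difference is cosmetic: the paper gets equi-integrability by the domination $b|u_m|^{p_m}\le b(1+|u_m|)^q$ with $1<q<\frac{N}{(N-1)r'}$, whose right-hand side converges in $L^1(\Omega)$ by the strong $L^{qr'}$ convergence, whereas you use the uniform $L^{\frac{N}{N-1}}$ bound together with absolute continuity of $\int_E b^r$.
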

\begin{proof}
    Since $b\in L^r(\Omega)$ for some $1\le r'<\frac{N}{N-1}$, it follows that $\frac{N(r-1)}{r(N-1)}>1$. Fix $1<q<\frac{N(r-1)}{r(N-1)}$ and consider $1<p_m<q$. We first have 
    \[b(x)|u_m|^{p_m}\le b(x) (1+|u_m|)^q\]
    and the left hand side converges strongly in $L^1(\Omega)$ owing to $\frac{qr}{r-1}<\frac{N}{N-1}$, so that the left hand side is equi-integrable. Furthermore, the pointwise convergence $b(x)|u_m|^{p_m}\to b(x)|u|$ and Vitali's Theorem leads to the $L^1$--convergence. 
\end{proof}

\begin{cor}\label{limit3}
 The identity
      \[\int_\Omega b(x)|u|\, dx=1\]
      is verified and so $u$ is nontrivial.
  \end{cor}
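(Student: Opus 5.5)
The corollary follows directly from Proposition \ref{limit2}. The plan is to pass to the limit in the normalization $\int_\Omega b(x)|u_m|^{p_m}\,dx=1$ and then read off nontriviality from the resulting identity.

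First, Proposition \ref{limit2} gives $b(x)|u_m|^{p_m}\to b(x)|u|$ strongly in $L^1(\Omega)$ along the subsequence fixed in Proposition \ref{limit1}. Strong $L^1$ convergence implies convergence of the integrals, so
\[
\int_\Omega b(x)|u|\,dx=\lim_{m\to\infty}\int_\Omega b(x)|u_m|^{p_m}\,dx=1 .
\]
If $u\equiv 0$ a.e., the left-hand side would vanish. Hence $u$ is nontrivial; more precisely, $u$ is not zero on the set where $b>0$.

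The only point that needs care is that the subsequence used here must be the one along which both Proposition \ref{limit1} and Proposition \ref{limit2} hold. Proposition \ref{limit2} was proved along the subsequence from Proposition \ref{limit1}, since it relies on the a.e. convergence obtained there. Beyond checking this, there is no real obstacle.
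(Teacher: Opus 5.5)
Your proof is correct and matches the paper's argument: the paper states the corollary without proof, as an immediate consequence of Proposition \ref{limit2}. One passes to the limit in the normalization $\int_\Omega b(x)|u_m|^{p_m}\,dx=1$ along the subsequence from Proposition \ref{limit1}, exactly as you do. One wording fix: your final remark should say that $u$ does not vanish a.e.\ on $\{b>0\}$, not that it is nonzero at every point of that set.
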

  
  \begin{thm}
The following convergences hold true
      \begin{enumerate}
    \item There exists a vector field $z \in L^{\infty}(\Omega, \R^N)$, with $\norm{z}_{\infty}\leq 1$ such that, up to subsequences, 
    \begin{equation}\label{conv-vector1}
    |\nabla u_m|^{p_m-2}\nabla u_m \rightharpoonup  z
    \qquad\hbox{weakly in } L^s(\Omega, \R^N)
    \end{equation}
 for all $1\le s<\infty$.
    \item There exists $\gamma \in L^{\infty}(\Omega)$, with $\norm{\gamma}_{\infty}\leq 1$ such that, up to subsequences, 
    \begin{equation}\label{conv-vector2}
    |u_m|^{p_m-2} u_m \rightharpoonup \gamma
    \qquad\hbox{weakly in } L^s(\Omega)
    \end{equation}
 for all $1\le s<\infty$.
    \end{enumerate}
  \end{thm}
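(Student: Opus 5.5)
The proof follows the standard scheme of Andreu--Ballester--Caselles--Maz\'on for the limit $p\to1$: uniform $L^s$ bounds on the vector fields for each fixed $s$, a diagonal extraction of weak limits, and then the bound $\|z\|_\infty\le1$ from lower semicontinuity of the norm. Part (2) is handled the same way.

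\textbf{Part (1).} Fix $1\le s<\infty$. For $m$ large we have $s(p_m-1)<p_m$, so H\"older's inequality with exponent $p_m/(s(p_m-1))$ gives
\[
\int_\Omega \big||\nabla u_m|^{p_m-2}\nabla u_m\big|^s\,dx=\int_\Omega |\nabla u_m|^{s(p_m-1)}\,dx\le \Big(\int_\Omega|\nabla u_m|^{p_m}dx\Big)^{\frac{s(p_m-1)}{p_m}}|\Omega|^{1-\frac{s(p_m-1)}{p_m}}.
\]
By \eqref{A2} and Proposition \ref{estimation}, $\int_\Omega|\nabla u_m|^{p_m}\le M/\mu$, so the right-hand side is at most $(\max\{1,M/\mu\})^{s(p_m-1)/p_m}|\Omega|^{1-s(p_m-1)/p_m}$. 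This tends to $|\Omega|$, hence
\begin{equation*}
\limsup_{m\to\infty}\big\||\nabla u_m|^{p_m-2}\nabla u_m\big\|_{L^s}\le |\Omega|^{1/s}.
\end{equation*}
By reflexivity for $s>1$, each such $L^s$ norm bound gives a weakly convergent subsequence. Taking $s=2,3,\dots$ and using a diagonal argument, we get one subsequence and one $z$ with $|\nabla u_m|^{p_m-2}\nabla u_m\rightharpoonup z$ weakly in $L^s$ for every integer $s\ge2$. Since $\Omega$ is bounded, weak convergence in $L^{s}$ implies weak convergence in $L^{t}$ for every $t\le s$, which covers all $1\le s<\infty$.

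To show $\|z\|_\infty\le1$, use weak lower semicontinuity of the norm together with the $\limsup$ bound: $\|z\|_{L^s}\le|\Omega|^{1/s}$ for every $s$. Letting $s\to\infty$ gives $\|z\|_\infty\le\lim_{s\to\infty}|\Omega|^{1/s}=1$.

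\textbf{Part (2).} Here $\big||u_m|^{p_m-2}u_m\big|^s=|u_m|^{s(p_m-1)}$. We cannot use the normalization $\int b|u_m|^{p_m}=1$, because $b$ may vanish on large sets. Instead we use the bound on $\int_\Omega|\nabla u_m|^{p_m}$ together with the Poincar\'e inequality. The issue is that the Poincar\'e constant in $W_0^{1,p}$ must be uniform as $p\to1$.

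A simpler route avoids this. By Proposition \ref{limit1}, $u_m$ is bounded in $L^1(\Omega)$, say $\|u_m\|_1\le K$. For $s(p_m-1)\le1$, H\"older's inequality gives
\[
\int_\Omega|u_m|^{s(p_m-1)}\,dx\le \|u_m\|_1^{s(p_m-1)}|\Omega|^{1-s(p_m-1)}\to|\Omega|.
\]
The rest of the argument is identical to Part (1): the same $\limsup$ bound, the diagonal extraction (taken along the subsequence already selected in Part (1)), and the same limit $s\to\infty$, which give $\gamma$ with $\|\gamma\|_\infty\le1$.

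The main step is the $\limsup$ estimate $\limsup_m\|\cdot\|_{L^s}\le|\Omega|^{1/s}$; a bound that merely holds for each fixed $s$ with an $s$-dependent constant would not yield $\|z\|_\infty\le1$. That estimate works because the exponents $s(p_m-1)$ tend to $0$ while the base integrals stay bounded.
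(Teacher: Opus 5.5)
Your proof is correct and follows the paper's scheme. You use a H\"older bound with exponent $s(p_m-1)\to 0$, so that $\limsup_m$ of the $L^s$ norms is bounded by a constant raised to the power $1/s$. You then extract weak limits diagonally and let $s\to\infty$ to obtain the $L^\infty$ bound.

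There are two minor differences from the paper.

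In part (1), the paper applies H\"older with the weight $a(x)$. This gives boundedness also in $L^s(\Omega,a)$, and the paper then identifies the weighted weak limit with $z$. That step is not part of the statement, but the paper relies on it later when passing to the limit in $\int_\Omega a(x)|\nabla u_m|^{p_m-2}\nabla u_m\cdot\nabla\varphi\,dx$.

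In part (2), the paper bounds $\int_\Omega |u_m|^{s(p_m-1)}\,dx$ by a power of $\int_\Omega |u_m|^{p_m}\,dx$ without explaining why the latter is bounded. Your route is self-contained: you use the $L^1$ bound on $u_m$ coming from Proposition \ref{limit1}. You also correctly avoid the normalization $\int_\Omega b(x)|u_m|^{p_m}\,dx=1$, which does not control $\int_\Omega |u_m|^{p_m}\,dx$.
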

  \begin{proof} 
  1) We want to prove the existence of the vector field $z$, then we proceed as in \cite[Theorem 3.5]{Mercaldo}. Fix $s\ge1$. For every $p$ such that $s<p'$, applying \eqref{BV-bound} and the H\"older inequality, it yields
\begin{multline*}\label{def-z}
\mu\int_{\Omega}\left(\abs{\nabla u_m}^{p_m-1}\right)^s\;dx \leq \int_{\Omega}a(x)\left(\abs{\nabla u_m}^{p_m-1}\right)^s\;dx\\ \leq \left(\int_{\Omega}a(x)\abs{\nabla u_m}^{p_m}\;dx\right)^{\frac{(p_m-1)s}{p_m}}\left(\int_{\Omega}a(x) \;dx\right)^{1-\frac{(p_m-1)s}{p_m}}\\
\le M^{\frac{(p_m-1)s}{p_m}}\left(\int_{\Omega}a(x)\; dx\right)^{1-\frac{(p_m-1)s}{p_m}}
\end{multline*}
Thus, $\abs{\nabla u_m}^{p_m-2}\nabla u_m$  is bounded in both $L^s(\Omega)$ and $L^s(\Omega, a)$. Following \cite{mazon, Mercaldo} and up to a subsequence, we get $z \in L^{\infty}(\Omega, \R^N)$ such that $\norm{z}_{\infty}\leq 1$ and $\abs{\nabla u_m}^{p_m-2}\nabla u_m \rightharpoonup z$ weakly in $L^s(\Omega, \R^N)$ for all $s<\infty$.
On the other hand, for each $s<\infty$ there exists $y_s \in L^s(\Omega,a)^N $ such that 
\begin{equation}\label{debil1}
\abs{\nabla u_m}^{p_m-2}\nabla u_m \rightharpoonup y_s\qquad \hbox{weakly in } L^s(\Omega, a)^N\,.
\end{equation}

We shall prove that the two limits are equal, that is: $z=y_s$. To this aim, we consider $a(x) \chi_{\{a(x)<k\}}\varphi$, with $\varphi \in L^{\infty}(\Omega,\R^N)$, and obtain 
$$\lim_{m\rightarrow\infty}\int_{\{a(x)<k\}}a(x)\abs{\nabla u_m}^{p_m-2}\nabla u_m \cdot \varphi = \int_{\{a(x)<k\}}a(x)z\cdot \varphi.$$
Now, having in mind \eqref{debil1}, consider the function $\chi_{\{a(x)<k\}}\varphi$ to get
$$\int_{\{a(x)<k\}}a(x)\abs{\nabla u_m}^{p_m-2}\nabla u_m \cdot \varphi \xrightarrow{m\rightarrow \infty} \int_{\{a(x)<k\}}a(x)y_s\cdot \varphi.$$
Hence, $$\int_{\{a(x)<k\}}a(x)z\cdot\varphi=\int_{\{a(x)<k\}}a(x) y_s\cdot \varphi\; \forall k.$$ 
Since this holds for every $\varphi \in L^{\infty}(\Omega, \R^N)$, it implies $a(x)\chi_{\{a<k\}} z =a(x)\chi_{\{a<k\}} y_s$ for all $k>0$. Then $z=y_s$, for all $s$, as desired.\\
 2) A similar procedure can be employed to prove the second claim. Indeed,
\begin{equation*}
\int_{\Omega}\left(\abs{ u_m}^{p_m-1}\right)^s dx \leq \left(\int_{\Omega}\abs{ u_m}^{p_m} dx\right)^{\frac{(p_m-1)s}{p_m}} |\Omega|^{1-\frac{(p_m-1)s}{p_m}},
\end{equation*}
then there exists $\gamma\in L^s(\Omega, b)$ such that $\abs{u_m}^{p_m-2}u_m$ weakly converges to $\gamma\in L^s(\Omega, b)$. By a diagonal procedure, the limit is independent of $s$.
Then,
$$\left[\int_{\Omega}|\gamma|^sdx\right]^{\frac1s}\leq |\Omega|^{\frac{1}{s}},$$
and passing to the limit for $s\rightarrow +\infty$, using \cite{pietra2}, we have that $\gamma\in L^{\infty}(\Omega)$ and $\norm{\gamma}_{L^{\infty}(\Omega)}\leq1$. 

Furthermore, if $x\in \Omega$ is such that $u(x)> 0$, then by the $L^1$-convergence of $u_m$ to $u$,
$$u_m(x)^{p_m-1}\rightarrow 1 \qquad \textrm{for}\; m\rightarrow \infty.$$
Indeed, $u_m(x)^{p_m-1}\rightharpoonup  \gamma$ weakly in $L^1(\Omega)$ and $u_m\to u$ strongly in $L^1(\Omega)$ and a.e. Hence, in $\{u(x)>0\}$ we have
\[\liminf_{m\to\infty}e^{(p_m-1)\log u_m(x)}\le \gamma(x)\le \limsup_{m\to\infty}e^{(p_m-1)\log u_m(x)}.\]
Being $$\lim_{m\rightarrow \infty}(p_m-1)\log u_m(x)=0,$$ we deduce
$$\liminf_{m\to\infty}e^{(p_m-1)\log u_m(x)}=1 \qquad \textrm{and}\qquad \limsup_{m\to\infty}e^{(p_m-1)\log u_m(x)}=1,$$  we can conclude that $\gamma(x)=1$ in $\{u(x)>0\}$. Therefore, $\gamma\in\sign(u)$.
\end{proof}

 \begin{thm}\label{main0}
 Function  $u$ is a solution to the eigenvalue problem \eqref{limit_problem_intro2}
 \end{thm}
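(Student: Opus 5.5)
We verify the three conditions of Definition \ref{sollimite} for $u$ (from Proposition \ref{limit1}), the field $z$ from \eqref{conv-vector1} and $\gamma$ from \eqref{conv-vector2}, with $\Lambda=\lim\lambda_m$.

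\emph{Condition (1).} We pass to the limit in \eqref{weak_formula} with $\varphi\in C_c^\infty(\Omega)$. Since $a\in L^1$ and $\varphi$ is smooth, the weak convergence \eqref{conv-vector1} is not directly enough against $a\nabla\varphi$ when $a$ is unbounded. We split $a=T_k(a)+(a-T_k(a))$ instead. The bounded part converges by \eqref{conv-vector1}. The remainder is bounded via Hölder in the weighted space, since $\int a|\nabla u_m|^{p_m-1}|\nabla\varphi|\chi_{\{a>k\}}$ is small uniformly in $m$ by the estimate in the proof of the previous theorem. On the right-hand side, $b\in L^r$ and $|u_m|^{p_m-2}u_m\rightharpoonup\gamma$ in $L^{r'}$, with $\lambda_m\to\Lambda$. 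This gives $-\dive(a z)=\Lambda b\gamma$ in $\mathcal D'(\Omega)$.

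\emph{Conditions (2) and (3).} The core is the energy inequality
\[\int_\Omega a|Du|+\int_{\partial\Omega}a|u|\,d\mathcal H^{N-1}\le\Lambda\int_\Omega b\,u\,dx=\Lambda .\]
We obtain it from Young's inequality $a|\nabla u_m|\le \frac1{p_m}a|\nabla u_m|^{p_m}+\frac{p_m-1}{p_m}a$, combined with $\int a|\nabla u_m|^{p_m}=\lambda_m$ and $u_m\in W_0^{1,1}$ (so there is no boundary term). We then apply Theorem \ref{lower_semicontinuity} together with $u_m\to u$ in $L^1$, and use Corollary \ref{limit3} and $\gamma u=u$ (because $\gamma\in\sign u$, $u\ge0$).

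Since $u$ solves (1), Proposition \ref{acot}'s proof needs only (1) and $\|z\|\le1$ to give boundedness? It actually uses (2),(3). So instead, to test with $u$ we use Proposition \ref{green-2}: $\dive(az)\in L^N$, and $a|u|\in L^1(\partial\Omega)$ by the energy inequality. This yields
\[\Lambda=\Lambda\int b\gamma u=\int_\Omega a(z,Du)-\int_{\partial\Omega}a u[z,\nu]\,d\mathcal H^{N-1}.\]
We then use $|(z,Du)|\le|Du|$ from \eqref{disu} and $-u[z,\nu]\le|u|$. Comparing with the energy inequality forces both inequalities to be equalities after integration against the positive weight $a$:
\[\int_\Omega a\big(|Du|-(z,Du)\big)+\int_{\partial\Omega}a\big(|u|+u[z,\nu]\big)=0 .\]
Each integrand is nonnegative (as a measure, respectively a function) and $a\ge\mu>0$. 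Hence $(z,Du)=|Du|$ as measures and $u(1+[z,\nu])=0$ $\mathcal H^{N-1}$-a.e. on $\partial\Omega$. Since $u\ge0$ and $|[z,\nu]|\le1$, this gives $-[z,\nu]\in\sign(u)$.

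The \textbf{main obstacle} is the limit in condition (1) with unbounded $a$. We need uniform integrability of $a|\nabla u_m|^{p_m-1}$, which we obtain from Hölder with exponent $p_m/(p_m-1)$:
\[\int_{\{a>k\}}a|\nabla u_m|^{p_m-1}\le M^{(p_m-1)/p_m}\Big(\int_{\{a>k\}}a\Big)^{1/p_m}.\]
A secondary technical point is justifying that the pairing measure $a(z,Du)$ is well defined for the unbounded $u$, which we handle with truncations as in Proposition \ref{green-2}.
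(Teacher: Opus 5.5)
Your proof is correct. For condition (2) it follows a genuinely different route from the paper's.

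\textbf{How the paper handles (2) and (3).} The paper proves (2) locally, working only with bounded functions. It tests the $p_m$-problem with $\varphi T_k(u_m)$, $\varphi\ge0$, and passes to the limit using lower semicontinuity of $\int_\Omega\varphi\,a|DT_k(u)|$. It then identifies the right-hand side with $\langle a(z,DT_k(u)),\varphi\rangle$ via Proposition \ref{no-def}, which gives $(z,DT_k(u))=|DT_k(u)|$ for each $k$, and finally lets $k\to\infty$. Only afterwards does it combine \eqref{var1} with Proposition \ref{green-2}, using (2) to cancel the interior terms and obtain (3).

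\textbf{How you handle them.} You derive the single inequality $\int_\Omega a\big(|Du|-(z,Du)\big)+\int_{\partial\Omega}a\big(|u|+u[z,\nu]\big)\,d\mathcal H^{N-1}\le0$ and read off (2) and (3) at once. This is shorter. It also avoids the delicate limit of $\int_\Omega a|\nabla u_m|^{p_m-2}\nabla u_m\cdot\nabla\varphi\,T_k(u_m)\,dx$ with an unbounded weight.

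\textbf{The price, and how to make it precise.} In your route, (2) comes entirely out of Green's formula for the possibly unbounded $u$, where $(z,Du)$ must be understood through truncations. Your remark about truncations can be made precise as follows. Set $\mu_k:=|DT_k(u)|-(z,DT_k(u))\ge0$. Applying Proposition \ref{green-1} to $T_k(u)$, combining with your energy inequality and letting $k\to\infty$ gives $\lim_k\int_\Omega a\,d\mu_k+\int_{\partial\Omega}a\big(|u|+u[z,\nu]\big)\,d\mathcal H^{N-1}\le0$. For $j<k$, the coarea splitting $|DT_k(u)|=|DT_j(u)|+|D(T_k(u)-T_j(u))|$ together with \eqref{disu} shows that $\mu_k$ is nondecreasing in $k$. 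Hence every $\mu_k$ vanishes, which is exactly the truncated identity the paper obtains directly.

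\textbf{Condition (1).} Your splitting $a=T_k(a)+(a-T_k(a))$, with the H\"older bound for uniform smallness, is a self-contained substitute for the paper's appeal to weak convergence in $L^s(\Omega,a)$ (the identification $y_s=z$). That identification is itself a truncation argument on $a$, so both approaches work.
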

 \begin{proof}
It remains to check the three conditions of Definition \ref{sollimite}.

\bigskip

Let $\varphi\in C^{\infty}_c(\Omega)$ be taken as test function in \eqref{weak_formula}, then 
$$
\int_{\Omega}\lambda_pb(x)\abs{u_m}^{p_m-2}u_m\varphi\;dx=\int_{\Omega}a(x)\abs{\nabla u_m}^{p_m-2}\nabla u_m \cdot \nabla \varphi\;dx. 
$$
Passing to the limit for $m$, we have that
$$\int_{\Omega}\Lambda b(x) \gamma\varphi\;dx=\int_{\Omega}a(x)z\cdot\nabla\varphi\;dx,$$
which implies that $-\dive(a(x)z)=\Lambda b(x) \gamma$ holds, in the sense of distributions.

\bigskip

The next step is to prove that $a(x)(z, Du)=a(x)\abs{Du}$. To do so, we take as test function $\varphi T_k(u_m)$, with $\varphi \in C^1_c(\Omega),\varphi \geq 0$. It yields 
\begin{multline*}
\int_{\Omega}a(x)\abs{\nabla T_k(u_m)}^{p_m}\varphi\;dx+\int_{\Omega}a(x)\abs{\nabla u_m}^{p_m-2}\nabla u_m\cdot\nabla \varphi T_k(u_m)\;dx\\=\int_{\Omega}\lambda_m b(x)\abs{u_m}^{p_m-2}u_m T_k(u_m)\varphi\;dx,
\end{multline*}
and then
\begin{multline*}
\int_{\Omega}a(x)\abs{\nabla T_k(u_m)}^{p_m}\varphi\;dx\\=-\int_{\Omega}a(x)\abs{\nabla u_m}^{p_m-2}\nabla u_m \cdot \nabla \varphi T_k(u_m)\;dx\\+\int_{\Omega}\lambda_m b(x)\abs{u_m}^{p_m-2}u_m T_k(u_m)\varphi\;dx.
\end{multline*}
Applying Young's inequality,  the lower semicontinuity (as $m$ goes to $\infty$) yields 
$$\int_{\Omega}\varphi a(x)\abs{D T_k(u)}\;dx\leq-\int_{\Omega}T_k(u)a(x)z\nabla \varphi\;dx+\Lambda \int_{\Omega}b(x) \gamma T_k(u)\varphi\;dx.$$
Thanks to Proposition \ref{no-def}, we have
$$\int_{\Omega}\varphi a(x)\abs{D T_k(u)}\;dx\leq
\langle a(x)(z,DT_k(u)),\varphi\rangle$$
for every nonnegative $\varphi\in C_c^\infty(\Omega)$.
Hence, $a(x)\abs{D T_k(u)}\le a(x)(z,DT_k(u))$ as measures and assumption \eqref{A2} yields $\abs{D T_k(u)}\le (z,DT_k(u))$.
The equality follows from \eqref{disu}.
Notice that we may let $k$ go to infinity obtaining $\abs{D u}= (z,Du)$ as measures.

\bigskip

For the last point, we choose $u_m$ as test function in \eqref{weak_formula} and applying the Young inequality, we have that 
\begin{multline*}
\int_{\Omega}a(x)\abs{\nabla u_m}\;dx+\int_{\partial \Omega}a(x)\abs{u_m}\;d\mathcal H^{N-1}\\
\leq \dfrac{1}{p_m}\int_{\Omega}a(x)\abs{\nabla u_m}^{p_m}\;dx+\dfrac{p_m-1}{p_m}\int_{\Omega}a(x)\;dx\\
=\dfrac{\lambda_m}{p_m}\int_{\Omega}b(x)\abs{u_m}^{p_m}\;dx+\dfrac{p_m-1}{p_m}\int_{\Omega}a(x)\;dx.
\end{multline*}
Since the left-hand side is lower semicontinuous, as proved in Theorem \eqref{lower_semicontinuity}, we get 
\begin{equation}\label{var1}
\int_{\Omega}a(x)\abs{Du}+\int_{\partial \Omega}a(x)\abs{u}\;d\mathcal{H}^{N-1}\leq \Lambda\int_{\Omega}b(x)\abs{u}\;dx\,.
\end{equation}
A straightforward consequence is $a(x)u\in L^1(\partial\Omega)$.

On the other hand, the previous step yields
$$\int_{\Omega}a(x)\abs{Du}+\int_{\partial \Omega}a(x)\abs{u}\;d\mathcal{H}^{N-1}\leq -\int_{\Omega}u\dive(a(x)z)\;dx$$
and so Green's formula of Proposition \ref{green-2} gives
\begin{equation*}
\int_{\Omega}a(x)\abs{Du}+\int_{\partial \Omega}a(x)\abs{u}\;d\mathcal{H}^{N-1}\leq\int_{\Omega}a(x)(z,Du)-\int_{\partial \Omega}u a(x) [z,\nu]\;d\mathcal{H}^{N-1}.
\end{equation*}
Simplifying, we arrive at
$$\int_{\partial \Omega}a(x)\abs{u}+a(x)u[z,\nu]\, d\mathcal H^{N-1} \leq 0.$$ 
Finally, observing that the integrand is nonnegative, this inequality implies that the integrand vanishes: $a(x)\abs{u}+a(x)u[z,\nu]=0$ and so $-[z,\nu] \in \sign (u)$.
\end{proof}

We point out that it also follows from \eqref{var1} that 
\begin{equation}\label{des-Chee0}
\int_{\Omega}a(x)\abs{Du}+\int_{\partial \Omega}a(x)\abs{u}\;d\mathcal{H}^{N-1}\leq \Lambda\,.\end{equation}
Thus,
\begin{equation}\label{des-Chee}
\inf\left\{\int_\Omega a(x)|Dv|+\int_{\partial\Omega}a(x)|v|\, d\mathcal H^{N-1}\>:\> \int_\Omega b(x)|v|\, dx=1\right\}\le \Lambda,
\end{equation}
where the infimum is taken over all $v\in BV(\Omega, a)$.

 \begin{rem}\label{compar}
 It is worth comparing the eigenvalues corresponding to different weights $a_1(x)$ and $a_2(x)$. Fixed $p>1$,  if $a_1(x)\le a_2(x)$, then $\lambda_p(\Omega, a_1, b)\le \lambda_p(\Omega, a_2, b)$. To check this inequality consider $v_i\in W_0^{1,p}(\Omega, a_i)$ which minimizes 
 \[\inf_{v\in W_0^{1,p}(\Omega,a_i)}\left\{\int_\Omega a_i(x)|\nabla v|^pdx \>:\> \int_\Omega b(x)|v|^pdx=1\right\}\,,\quad i=1,2\,.\]
 Since $v_2\in W_0^{1,p}(\Omega,a_2)\subset W_0^{1,p}(\Omega,a_1)$, it follows that 
 \begin{multline*}
 \lambda_p(\Omega, a_1, b)= \int_\Omega a_1(x)|\nabla v_1|^pdx\le 
 \int_\Omega a_1(x)|\nabla v_2|^pdx\\
 \le \int_\Omega a_2(x)|\nabla v_2|^pdx= \lambda_p(\Omega, a_2, b)\,.
 \end{multline*}

 Going further, we obtain a comparison between the limits corresponding to different weights $a_1(x)$ and $a_2(x)$. Indeed, if $\{p_m\}$ is a sequence of real numbers that converges to 1 and such that there exist 
 $$\Lambda_i=\lim_{m\to\infty}\lambda_{p_m}(\Omega, a_i, b),\quad i=1,2\,,$$ then 
 $\Lambda_1\le \Lambda_2$.
\end{rem}

\section{The weighted Cheeger problem}
\label{CheegerSec}

\subsection{The weighted Cheeger constants}
In this Section, we point out the link between our sequential limit $\Lambda$ and the weighted Cheeger constant. This fact will allow us to conclude that $\lim_{p\to1}\lambda_p$ actually exists (under a suitable assumption) and the weighted Cheeger constant is, also in our framework, the first eigenvalue. 

The analysis of the Cheeger problem along with uniqueness and regularity properties of Cheeger sets has been studied by many authors (see \cite{parini}). We next consider its weighted counterpart.\\
The definition of the weighted perimeter of a set $E\subset \Omega$ was introduced in \cite{Ion-Lachand} (see also  \cite{BBF} and \cite{CFM}), 
$$P_a(E,\Omega)=\sup\left\{ \int_E \dive \varphi \; dx: \varphi \in C^1_c(\Omega;\R^N), \abs{\varphi}\leq a(x) \right\}.$$
Observe that for every $E\subset \Omega$ of finite perimeter it holds
$$P_a(E, \Omega)=\int_\Omega a(x)|D\chi_{E}|$$
and
$$P_a(E, \R^N)=\int_\Omega a(x)|D\chi_{E}|+\int_{\partial\Omega}a(x)\chi_E\, d\mathcal H^{N-1}.$$

\begin{defn}We define the weighted Cheeger constant as
$$h(\Omega, a, b)=\inf_E \frac{P_a(E,\R^N)}{\int_E b(x)\, dx}$$
where the infimum is taken over all measurable subsets $E\subset\Omega$ having finite weighted perimeter and satisfying $\int_E b(x)\, dx\ne0$.
\end{defn}

In our weighted setting a co-area formula is satisfied (see \cite[Theorem 2.2]{cianchi_fusco}). For nonnegative functions it reads as follows.

\begin{thm}
  Let  $u\in BV(\Omega, a)$ be nonnegative, then
  $$\int_\Omega a(x)|Du|+\int_{\partial\Omega}a(x) u\, d\mathcal H^{N-1}=\int_{0}^{+\infty} P_a(\{u>t\})\, dt,$$
  here $P_a(\{u>t\})$ means $P_a(\{u>t\},\R^N)$.
\end{thm}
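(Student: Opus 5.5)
The plan is to reduce the statement to the weighted co-area formula in the interior, \cite[Theorem 2.2]{cianchi_fusco}, by extending $u$ by zero. This is the same device used in the proof of Theorem \ref{lower_semicontinuity}. Fix an open set $\Omega'\supset\supset\Omega$ and define $\overline u=u$ in $\Omega$ and $\overline u=0$ in $\Omega'\setminus\Omega$. Extend the weight to some $\overline a\in W^{1,1}(\Omega')$ with $\overline a\ge\mu$; this is possible because $\partial\Omega$ is Lipschitz, so a $W^{1,1}$ extension operator exists, and we may then take the maximum with $\mu$. The extension chosen outside $\overline\Omega$ will be irrelevant, because $|D\overline u|$ lives on $\overline\Omega$. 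By \cite[Corollary 3.89]{AFP}, as in the proof of Theorem \ref{lower_semicontinuity}, we have $|D\overline u|=|Du|+|u|\mathcal H^{N-1}\res\partial\Omega$. Hence
\[
\int_{\Omega'}\overline a\,|D\overline u|=\int_\Omega a(x)|Du|+\int_{\partial\Omega}a(x)u\,d\mathcal H^{N-1},
\]
where we use $u\ge0$ and the fact that the trace of $\overline a$ on $\partial\Omega$ coincides $\mathcal H^{N-1}$-a.e. with the precise representative of $a$ there. This identification of the boundary values of the weight is one point that needs care.

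Next, apply the co-area formula in $\Omega'$ to $\overline u$ and $\overline a$:
\[
\int_{\Omega'}\overline a\,|D\overline u|=\int_{-\infty}^{+\infty}\int_{\Omega'}\overline a\,|D\chi_{\{\overline u>t\}}|\,dt.
\]
Since $\overline u\ge0$, for $t<0$ the superlevel set is all of $\Omega'$ and contributes nothing. For $t\ge0$ we have $\{\overline u>t\}=\{u>t\}\subset\Omega$. Applying the same zero-extension identity to $\chi_{\{u>t\}}$ gives
\[
\int_{\Omega'}\overline a\,|D\chi_{\{u>t\}}|=\int_\Omega a|D\chi_{\{u>t\}}|+\int_{\partial\Omega}a\chi_{\{u>t\}}\,d\mathcal H^{N-1}=P_a(\{u>t\},\R^N),
\]
using the expression for $P_a(E,\R^N)$ recorded just before the definition of $h(\Omega,a,b)$. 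Combining the two displays yields the claim.

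If \cite{cianchi_fusco} is not directly applicable, I will fall back on the following route.
\begin{enumerate}
\item Use the unweighted co-area formula $|D\overline u|=\int_{\R}|D\chi_{\{\overline u>t\}}|\,dt$ as measures on $\Omega'$; this holds as an identity of Radon measures, not only of total masses \cite{AFP,EG}.
\item Integrate the nonnegative Borel function $\overline a$ against both sides. This is legitimate by Tonelli, since all these measures are absolutely continuous with respect to $\mathcal H^{N-1}$ and $\overline a$ is defined $\mathcal H^{N-1}$-a.e.
\item Truncate with $T_k(\overline a)$ and let $k\to\infty$ by monotone convergence.
\end{enumerate}
I expect the main obstacle to be this measure-theoretic justification: that the precise representative of $a$ is measurable with respect to every $|D\chi_{\{u>t\}}|$ and to the boundary term simultaneously, and that the resulting function of $t$ is measurable, so that Fubini--Tonelli applies.
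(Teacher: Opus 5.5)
Your proposal is correct and follows the paper. The paper gives no argument beyond invoking the weighted co-area formula of \cite[Theorem 2.2]{cianchi_fusco}; your zero extension to $\Omega'$ (the device from Theorem \ref{lower_semicontinuity}) is what turns that formula into the version with the boundary term and $P_a(\{u>t\},\R^N)$, and your fallback via the measure-valued unweighted co-area formula is the standard proof of the cited result.

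The measurability issue you flag is harmless. $|D\overline u|$ and every $|D\chi_{\{u>t\}}|$ are absolutely continuous with respect to $\mathcal H^{N-1}$ and carried by $\overline\Omega$, so a single Borel integrand works for all of them at once: the precise representative of $a$ in $\Omega$ and the trace of $a$ on $\partial\Omega$. With that choice no extension of the weight is needed. The identity then holds in $[0,+\infty]$, without any integrability of $a u$ on $\partial\Omega$.
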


\begin{prop}\label{prop:minim}
    Consider the minimization problem
    \begin{equation}\label{first-chee}
    \overline h:=\inf_{w}\dfrac{\int_{\Omega}a(x)\abs{D w}+\int_{\partial\Omega}a(x)|w|\, d\mathcal H^{N-1}}{\int_{\Omega}b(x)\abs{w}\, dx}
    \end{equation}
    with $w$ varying over all $w\in BV(\Omega, a)$ such that $\int_{\Omega}b(x)\abs{w}\, dx\ne0$.
    This infimum is achieved and  $\overline h=h(\Omega, a, b)$.
\end{prop}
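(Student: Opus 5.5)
Two inequalities are needed for $\overline h = h(\Omega,a,b)$, plus the existence of a minimizer. The inequality $\overline h\le h(\Omega,a,b)$ is immediate. For any admissible $E$ (finite weighted perimeter, $\int_E b\ne0$), the function $w=\chi_E$ lies in $BV(\Omega,a)$, because $\int_\Omega a|D\chi_E|=P_a(E,\Omega)\le P_a(E,\R^N)<\infty$. Its quotient is exactly $P_a(E,\R^N)/\int_E b$, using the identity for $P_a(E,\R^N)$ recalled above. Taking the infimum over $E$ gives the inequality.

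For the reverse inequality $h\le\overline h$ I would use the weighted co-area formula. Given admissible $w$, replace it by $|w|$. This does not increase the numerator, since $|D|w||\le|Dw|$ as measures, and it leaves the denominator unchanged. So we may assume $w\ge0$. The co-area formula gives
\[
\int_\Omega a|Dw|+\int_{\partial\Omega}a\,w\,d\mathcal H^{N-1}=\int_0^\infty P_a(\{w>t\})\,dt .
\]
Cavalieri's principle with the weight $b$ gives
\[
\int_\Omega b\,w\,dx=\int_0^\infty\int_{\{w>t\}}b\,dx\,dt .
\]
For a.e.\ $t$ the set $E_t=\{w>t\}$ has finite weighted perimeter. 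Whenever $\int_{E_t}b\ne0$ we have $P_a(E_t)\ge h\int_{E_t}b$, and when $\int_{E_t}b=0$ this inequality holds trivially. Integrating in $t$ yields numerator $\ge h\cdot$ denominator, so $h\le\overline h$.

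Existence of a minimizer is where the real work lies. I would take a minimizing sequence $w_n\ge0$ normalized by $\int_\Omega b\,w_n=1$. Its numerators are bounded, so by \eqref{A2} the sequence is bounded in $BV(\Omega)$, with boundary terms included. The compact embedding then gives, along a subsequence, $w_n\to w$ in $L^q$ for all $q<\frac N{N-1}$ and a.e. Theorem \ref{lower_semicontinuity} gives lower semicontinuity of the numerator.

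The obstacle is passing the constraint to the limit, namely showing $\int_\Omega b\,w=1$ so that $w$ is admissible. Since $b\in L^r$ with $r>N$, we have $r'<\frac N{N-1}$. I would therefore argue as in Proposition \ref{limit2}: the sequence $w_n$ is bounded in $L^{N/(N-1)}$ by the Sobolev embedding $BV(\Omega,a)\hookrightarrow L^{N/(N-1)}$, and it converges strongly in $L^{r'}$. Hence $\int b\,w_n\to\int b\,w=1$ by Hölder's inequality. It follows that $w$ attains $\overline h$. Optionally, the co-area argument applied to this minimizer also shows that almost every level set $E_t$ with $\int_{E_t}b>0$ is a Cheeger set, so $h$ is attained as well.
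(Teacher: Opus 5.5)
Your proposal is correct and follows the paper's proof: a normalized minimizing sequence, compactness in $BV(\Omega)$, lower semicontinuity, and strong $L^{r'}$ convergence to keep the constraint. As in the paper, you use $\chi_E$ as competitors for $\overline h\le h$ and the weighted co-area formula for the reverse inequality. The only cosmetic difference is that you apply the co-area inequality to an arbitrary nonnegative admissible $w$ rather than to the minimizer, which makes the identity $\overline h=h$ independent of the existence part.
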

\begin{proof}
    Observe that we may minimize the functional 
    $$w\mapsto\int_{\Omega}a(x)\abs{D w}+\int_{\partial\Omega}a(x)|w|\, d\mathcal H^{N-1}$$
    under the condition $\int_{\Omega}b(x)\abs{w}\, dx=1$. Let $w_n$ be a minimizing sequence satisfying
    $$\int_{\Omega}a(x)\abs{D w_n}+\int_{\partial\Omega}a(x)|w_n|\, d\mathcal H^{N-1}\le \overline h+\frac1n$$
    and
    $$\int_\Omega b(x)|w_n|\, dx=1\,.$$
    Then
    $$\int_{\Omega}a(x)\abs{D w_n}+\int_{\partial\Omega}a(x)|w_n|\, d\mathcal H^{N-1}\le\overline h+1 =:H$$
    so that
    $$\mu \left[\int_{\Omega}\abs{D w_n}+\int_{\partial\Omega}|w_n|\, d\mathcal H^{N-1}\right]\le H$$
    wherewith the sequence $w_n$ is bounded in $BV(\Omega)$.
    Hence, there exists $w\in L^1(\Omega)$ and a subsequence (still denoted by $w_n$) such that $w_n$ converges to $w$ strongly in $L^q(\Omega)$ for all $1\le q<\frac{N}{N-1}$. It follows from the lower semicontinuity that
    \begin{multline*}
    \int_{\Omega}a(x)\abs{D w}+\int_{\partial\Omega}a(x)|w|\, d\mathcal H^{N-1}\\\le\liminf_{n\to\infty}\int_{\Omega}a(x)\abs{D w_n}+\int_{\partial\Omega}a(x)|w_n|\, d\mathcal H^{N-1}\le \overline h\,.
    \end{multline*}
    So $w\in BV(\Omega, a)$ and $\int_{\Omega}a(x)\abs{D w}+\int_{\partial\Omega}a(x)|w|\, d\mathcal H^{N-1}=\overline h$.  

    On the other hand, $b\in L^r(\Omega)$, with $r>N$, and the strong convergence of $w_n$ to $w$ in $L^{r'}(\Omega)$ implies 
    $$\int_\Omega b(x)|w|\, dx=\lim_{n\to\infty}\int_\Omega b(x) |w_n|\, dx=1\,,$$
    and we deduce that $w$ minimizes our problem. 
    
    It follows from the chain rule (see \cite[Theorem 3.96]{AFP}) that $\big|D |w|\big|\le \abs{D w}$, so that 
    \[\int_{\Omega}a(x) \big|D |w|\big|+\int_{\partial\Omega}a(x)|w|\, d\mathcal H^{N-1}\le\overline h\]
     and $\int_\Omega b(x)|w|\, dx=1$. Hence, $|w|$ is a minimizer as well and so we may assume that the minimizer is nonnegative.
    
    Next, we will check the identity $\overline h=h(\Omega, a, b)$, first notice that $\chi_E\in BV(\Omega, a)$ for every $E\subset\Omega$ with finite weighted perimeter; thus $\overline h\le h(\Omega, a, b)$.

    To see the opposite inequality, let $w\in BV(\Omega, a)$ be a nonnegative minimizer of our problem. Then the weighted coarea formula implies
    \begin{multline*}
        0=\int_\Omega a(x)|Dw|+\int_{\partial\Omega}a(x) w\, d\mathcal H^{N-1}-\overline h \int_\Omega b(x) w\, dx\\
        \ge \int_\Omega a(x)|Dw|+\int_{\partial\Omega}a(x) w\, d\mathcal H^{N-1}-h(\Omega, a, b) \int_\Omega b(x) w\, dx\\
        =\int_{0}^{+\infty}\left(P_a(\{w>t\})-h(\Omega, a, b)\int_{\{w>t\}}b(x)\, dx\right)\, dt\ge0\,.
    \end{multline*}
    Therefore, $\overline h=h(\Omega, a, b)$ and we are done. It is worth remarking that the integrand in the last line vanishes for almost all $t\in \R$, so that the level sets $\{w>t\}$ are weighted Cheeger sets for a.e. $t\in\R$.
\end{proof}

\begin{cor}\label{comp-Chee}
Every eigenvalue $\Lambda$ of \eqref{limit_problem_intro2} satisfies
\[h(\Omega, a, b)\le\Lambda\,.\]
\end{cor}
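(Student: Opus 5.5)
The plan is to combine the energy identity of Remark \ref{test-sol} with the variational characterization of Proposition \ref{prop:minim}. Let $\Lambda$ be an eigenvalue of \eqref{limit_problem_intro2}, and let $u\in BV(\Omega,a)$ be a nontrivial eigenfunction with associated $z$ and $\gamma$ as in Definition \ref{sollimite}.

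First, by Proposition \ref{acot}, $u$ is bounded. This allows us to take $u$ itself as a test function in condition (1) of Definition \ref{sollimite} and apply the Green formula of Proposition \ref{green-1}. As already recorded in Remark \ref{test-sol}, conditions (2) and (3), together with $\gamma\in\sign(u)$, then give the identity \eqref{eq:test-sol}:
\[
\int_\Omega a(x)|Du|+\int_{\partial\Omega}a(x)|u|\, d\mathcal H^{N-1}=\Lambda\int_\Omega b(x)|u|\, dx .
\]
Point (1) of the same remark shows that $\int_\Omega b(x)|u|\,dx\neq 0$. Hence $u$ is an admissible competitor in the minimization problem \eqref{first-chee} defining $\overline h$.

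Dividing the identity by $\int_\Omega b(x)|u|\,dx>0$, we obtain that the Rayleigh quotient of $u$ equals $\Lambda$. The definition of $\overline h$ as an infimum then yields $\overline h\le\Lambda$. Finally, Proposition \ref{prop:minim} states that $\overline h=h(\Omega,a,b)$, so $h(\Omega,a,b)\le\Lambda$.

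The only delicate point is the legitimacy of using $u$ as a test function in the Green formula. The needed boundedness is supplied by Proposition \ref{acot}, so there is no real obstacle remaining; all other steps are direct citations of earlier results.
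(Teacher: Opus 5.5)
Your proposal is correct and is essentially the paper's proof. You test condition (1) with the bounded eigenfunction $u$ to get \eqref{eq:test-sol}, use $\int_\Omega b(x)|u|\,dx>0$, and compare with the infimum $\overline h=h(\Omega,a,b)$ of Proposition \ref{prop:minim}; the only cosmetic difference is that the paper normalizes $\int_\Omega b(x)|u|\,dx=1$ instead of dividing by it.
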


\begin{proof}
Let $u$ be a solution to \eqref{limit_problem_intro2}. 
By Remark \ref{test-sol}, we know that $\int_\Omega b(x)|u|\, dx\ne0$, so that we may choose $u$ such that $\int_\Omega b(x)|u|\, dx=1$. Taking $u$ as a test function and applying Green's formula, we obtain
\[h(\Omega, a, b)\le \int_\Omega a(x)|Du|+\int_{\partial\Omega}a(x)|u|\, d\mathcal H^{N-1}=\Lambda\]
as desired.
\end{proof}

We have to consider another related constant, that is equal to the Cheeger constant in the isotropic case.

\begin{defn} The smooth weighted Cheeger constant is defined as
$$\sigma(\Omega, a, b)=\inf_D \frac{P_a(D, \Omega)}{\int_D b(x)\, dx}$$
where the infimum is taken over all open subsets $D\subset\Omega$ having $C^\infty$ boundary, such that $\overline D\subset\Omega$ and $\int_D b(x)\, dx\ne0$.
\end{defn}

\begin{rem}\label{oss}
It is straightforward that $h(\Omega, a, b)\le \sigma(\Omega, a, b)$ since every open set $D$ having $C^\infty$ boundary has finite weighted perimeter.
\end{rem}

\begin{prop}\label{minim}
 Consider the minimization problem
    $$\overline \sigma:=\inf_{v}\dfrac{\int_{\Omega}a(x)\abs{\nabla v}\, dx}{\int_{\Omega}b(x)\abs{v}\, dx}$$
    where $v$ is taken over all $v\in C_c^\infty(\Omega)$ such that $\int_{\Omega}b(x)\abs{v}\, dx\ne0$.
    
    Then $\overline \sigma=\sigma(\Omega, a, b)$.
\end{prop}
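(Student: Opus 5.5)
We prove the two inequalities $\overline\sigma\le\sigma(\Omega,a,b)$ and $\sigma(\Omega,a,b)\le\overline\sigma$ separately. In both directions the tool is the weighted coarea formula for smooth functions together with an approximation of characteristic functions of smooth sets.

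\emph{Step 1: $\sigma(\Omega,a,b)\le\overline\sigma$.} Fix $v\in C_c^\infty(\Omega)$ with $\int_\Omega b|v|\,dx\ne0$. Replacing $v$ by $|v|$ is not allowed directly, since $|v|$ need not be smooth. Instead we apply the classical coarea formula to the Lipschitz function $|v|$, whose level sets $\{|v|>t\}$ coincide with $\{v>t\}\cup\{v<-t\}$, and write
\[
\int_\Omega a(x)|\nabla v|\,dx=\int_0^{\infty}\int_{\{|v|=t\}}a\,d\mathcal H^{N-1}\,dt,\qquad \int_\Omega b|v|\,dx=\int_0^\infty\int_{\{|v|>t\}}b\,dx\,dt .
\]
By Sard's theorem, for a.e.\ $t>0$ the value $t$ is a regular value of $v$ and of $-v$. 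For such $t$, the set $D_t=\{|v|>t\}$ is open with $C^\infty$ boundary $\{|v|=t\}$, and $\overline{D_t}\subset\operatorname{supp}v\subset\Omega$. Moreover $P_a(D_t,\Omega)=\int_{\partial D_t}a\,d\mathcal H^{N-1}$, where the precise representative of $a$ is used, since $a\in W^{1,1}$ has traces on smooth hypersurfaces. If $\frac{P_a(D_t,\Omega)}{\int_{D_t}b}\ge\sigma$ whenever $\int_{D_t}b\ne0$, then integrating in $t$ gives $\int a|\nabla v|\ge\sigma\int b|v|$. Hence $\sigma\le\overline\sigma$.

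\emph{Step 2: $\overline\sigma\le\sigma(\Omega,a,b)$.} Fix an admissible smooth open $D$ with $\overline D\subset\Omega$. Let $d$ be the signed distance to $\partial D$, which is smooth in a tubular neighbourhood, and set $v_\varepsilon=\eta_\varepsilon(d)$ with $\eta_\varepsilon$ a smooth profile going from $1$ inside $D$ to $0$ at distance $\varepsilon$ outside. Then $v_\varepsilon\in C_c^\infty(\Omega)$ for small $\varepsilon$, and $v_\varepsilon\to\chi_D$ in $L^{r'}$, so $\int b v_\varepsilon\to\int_D b\ne0$. By the coarea formula,
\[
\int_\Omega a|\nabla v_\varepsilon|\,dx=\int_0^1\int_{\{v_\varepsilon=s\}}a\,d\mathcal H^{N-1}\,ds,
\]
and the level sets are the parallel surfaces $\{d=\delta\}$ with $0\le\delta\le\varepsilon$. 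It remains to show
\[
\int_{\{d=\delta\}}a\,d\mathcal H^{N-1}\to\int_{\partial D}a\,d\mathcal H^{N-1}\qquad\text{as }\delta\to0 .
\]

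\textbf{Main obstacle.} This convergence of surface integrals is the delicate point, because $a$ is merely $W^{1,1}$. We handle it using the tubular diffeomorphism $\Phi(y,\delta)=y+\delta\nu(y)$. Writing $g(y,\delta)=a(\Phi(y,\delta))J(y,\delta)$ with a smooth Jacobian $J\to1$, we have, for a.e.\ $y$ and a.e.\ $\delta$,
\[
|g(y,\delta)-g(y,0)|\le\int_0^\delta|\partial_s g(y,s)|\,ds .
\]
Integrating over $\partial D$ bounds the difference of the surface integrals by $C\int_{\{0<d<\delta\}}(|\nabla a|+|a|)\,dx$, which tends to $0$ by absolute continuity of the integral. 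This requires using the trace (precise representative) of $a$ on $\partial D$, which is legitimate because $a\in W^{1,1}$ and the $W^{1,1}$ trace is obtained by exactly this one-dimensional argument along normals. Averaging in $\delta\in(0,\varepsilon)$ then gives $\int a|\nabla v_\varepsilon|\to P_a(D,\Omega)$, hence $\overline\sigma\le\frac{P_a(D,\Omega)}{\int_D b}$. Taking the infimum over $D$ yields $\overline\sigma\le\sigma$, and together with Step 1 this gives $\overline\sigma=\sigma(\Omega,a,b)$.
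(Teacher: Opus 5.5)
Your proof is correct and follows the same route as the paper. For $\sigma(\Omega,a,b)\le\overline\sigma$ you apply the coarea formula on the level sets $\{|v|>t\}$, and for $\overline\sigma\le\sigma(\Omega,a,b)$ you approximate $\chi_D$ by functions equal to $1$ on $D$ that vanish outside an $\varepsilon$-neighbourhood. Your write-up is more careful than the paper's on three points:
\begin{enumerate}
\item the smooth profile $\eta_\varepsilon(d)$ keeps you inside $C_c^\infty(\Omega)$, so you do not need the paper's density reduction to $W_0^{1,1}(\Omega,a)$;
\item Sard's theorem justifies that almost every $\{|v|>t\}$ is an admissible smooth set, which the paper leaves implicit;
\item your tubular-coordinate estimate proves the convergence $\int_{\{d=\delta\}}a\,d\mathcal H^{N-1}\to\int_{\partial D}a\,d\mathcal H^{N-1}$, which the paper only asserts in the form $\int_{I_\varepsilon(D)\setminus D}a\,dx=\varepsilon\int_{\partial D}a\,d\mathcal H^{N-1}+\varepsilon\, o(\varepsilon)$.
\end{enumerate}
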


\begin{proof}
To begin with, we point out that 
 $$\overline \sigma=\inf_{w}\dfrac{\int_{\Omega}a(x)\abs{\nabla w}\, dx}{\int_{\Omega}b(x)\abs{w}\, dx}$$
    here $w$ varies over all $w\in W_0^{1,1}(\Omega, a)$ such that $\int_{\Omega}b(x)\abs{w}\, dx\ne0$.
     This fact is a consequence of the density $\overline{C_c^\infty(\Omega)}=W_0^{1,1}(\Omega, a)$. 

We are next proving $\overline \sigma\le \sigma(\Omega, a, b)$.
Given an open set with smooth boundary $D$ such that $\overline D\subset\Omega$ and $\int_{D}b(x)\, dx\ne0$, consider the $\varepsilon$-neighborhood 
$I_{\varepsilon}(D)=\{x\in\Omega : dist(x, D)<\varepsilon\}$, where $\varepsilon$ is small enough to satisfy $\overline{I_{\varepsilon}(D)}\subset\Omega$. Then consider a function given by
 $w=1$ for all $x\in \overline D$, $w(x)=0$ for all $x\in\Omega\setminus I_{\varepsilon}(D)$ and $|\nabla w|=\frac1\varepsilon$ in $I_\varepsilon(D)\setminus D$. Obviously, $w\in W_0^{1,1}(\Omega, a)$ and
\[\int_\Omega b(x)|w|\, dx\ge\int_D b(x)\, dx>0.\]
On the other hand, we also have
\[\int_{I_\varepsilon(D)\backslash D}a(x)\, dx
    =\int_{\partial D}a(x) \varepsilon\, d\mathcal H^{N-1}+\varepsilon o(\varepsilon).\]
Hence,
\begin{equation*}
\overline\sigma\le \dfrac{\int_\Omega a(x)|\nabla w|\, dx}{\int_\Omega b(x)|w|\, dx}
\le \dfrac{\int_{I_\varepsilon(D)\setminus D} a(x)\frac1\varepsilon\, dx}{\int_D b(x)\, dx}
\le \dfrac{\int_{\partial D} a(x)\, dx+o(\varepsilon)}{\int_D b(x)\, dx},
\end{equation*}
so that 
\[\overline\sigma\le \dfrac{\int_{\partial D} a(x)\, dx}{\int_D b(x)\, dx}=\dfrac{P_a(D,\Omega)}{\int_D b(x)\, dx}.\]
Therefore, $\overline \sigma\le \sigma(\Omega, a, b)$.

To check the last inequality, choose $\delta>0$ and take $v\in C_c^\infty(\Omega)$ such that $\int_\Omega b(x)|v|\, dx\ne0$ and
\[ \dfrac{\int_\Omega a(x) |\nabla v|\, dx}{\int_\Omega b(x)|v|\, dx}\le \overline\sigma+\delta\,.\]
As usual, the homogeneity of the quotient on the left hand side allows us to assume  $\int_\Omega b(x)|v|\, dx=1$. Moreover, since $\big|\nabla |v|\big|=|\nabla v|$, taking $|v|$ instead of $v$, we may assume that $v\in W_0^{1,1}(\Omega, a)$ is nonnegative. Then, owing to the coarea formula, 
\begin{multline*}
\delta\ge \int_\Omega a(x)|\nabla v|\, dx-\overline\sigma
\ge \int_\Omega a(x)|\nabla v|\, dx-\sigma(\Omega, a, b)\\
=\int_{0}^{+\infty}\left(P_a(\{v>t\})-\sigma(\Omega, a, b)\int_{\{v>t\}}b(x)\, dx\right)\, dt\ge0\,.
\end{multline*}
Thus, the following inequalities hold:
\[\sigma(\Omega, a, b)\le \int_\Omega a(x) |\nabla v|\, dx\le \overline\sigma+\delta\,.\]
The arbitrariness of $\delta>0$ gives the inequality 
$\sigma(\Omega, a, b)\le \overline\sigma$.
\end{proof}

\begin{rem}\label{compar2}
Continuing with our comparison of weights, let us see what happens with the corresponding Cheeger constants. If $a_1(x)\le a_2(x)$, we deduce that $BV(\Omega, a_2(x))\subset BV(\Omega, a_1(x))$, so that the infimum taken over $BV(\Omega, a_2(x))$ is bigger than that taken over $BV(\Omega, a_1(x))$. Hence, $$h(\Omega, a_1, b)\le h(\Omega, a_2, b)\,.$$

The same argument serves to prove that if  $a_1(x)\le a_2(x)$, then 
$$\sigma (\Omega, a_1, b)\le \sigma (\Omega, a_2, b).$$
\end{rem}

\subsection{Convergence of $p$-eigenvalues and Cheeger's constants.}\label{Sec-5.2}

This subsection is devoted to show the connexion between the limits of the first eigenvalues of problem \eqref{elliptic_problem} and the Cheeger constants that have been introduced in the previous subsection. 

\begin{thm}\label{prop:iden1}
The following inequalities hold:
\[h(\Omega, a, b)\le \liminf_{p\to1^+}\lambda_p(\Omega, a, b)\] 
\[\limsup_{p\to1^+}\lambda_p(\Omega, a, b)\le \sigma(\Omega, a, b)\,.\]
\end{thm}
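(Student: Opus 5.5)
The two inequalities are proved separately: the lower bound by a compactness argument along subsequences, the upper bound by testing the Rayleigh quotient \eqref{autoval} with smooth functions.

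\emph{Lower bound.} Choose a sequence $p_m\to1^+$ with $\lambda_{p_m}\to\liminf_{p\to1^+}\lambda_p=:\Lambda$. Lemma \ref{boundedness} guarantees that this limit is finite. The whole construction of Section \ref{problema_limite_sezione} then applies to this sequence. Proposition \ref{limit1} and Corollary \ref{limit3} give a limit $u\in BV(\Omega,a)$ with $\int_\Omega b(x)|u|\,dx=1$. Inequality \eqref{des-Chee} yields that the infimum in \eqref{first-chee} is at most $\Lambda$. Proposition \ref{prop:minim} identifies that infimum with $h(\Omega,a,b)$, so $h(\Omega,a,b)\le\Lambda$. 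Alternatively, Theorem \ref{main0} shows that $\Lambda$ is an eigenvalue, and Corollary \ref{comp-Chee} gives the same conclusion. This part is routine.

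\emph{Upper bound.} By Proposition \ref{minim}, it suffices to show $\limsup_{p\to1^+}\lambda_p\le \int_\Omega a|\nabla v|\,dx\big/\int_\Omega b|v|\,dx$ for every $v\in C_c^\infty(\Omega)$ with $\int_\Omega b|v|\ne0$. Taking the infimum over such $v$ then gives $\overline\sigma=\sigma(\Omega,a,b)$.

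Fix such a $v$. Since $v$ is smooth with compact support, it belongs to $W_0^{1,p}(\Omega,a)$ for every $p$ and is admissible in \eqref{autoval}. Hence
\[
\lambda_p\le \frac{\int_\Omega a(x)|\nabla v|^p\,dx}{\int_\Omega b(x)|v|^p\,dx}.
\]
We let $p\to1^+$ in numerator and denominator separately, using dominated convergence. For the numerator, $|\nabla v|^p\le \max\{1,\|\nabla v\|_\infty^2\}$ for $p\le2$, and $a\in L^1(\Omega)$. For the denominator, $|v|^p\le\max\{1,\|v\|_\infty^2\}$, and $b\in L^r(\Omega)\subset L^1(\Omega)$. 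Moreover, $|\nabla v|^p\to|\nabla v|$ and $|v|^p\to|v|$ pointwise, including at points where these quantities vanish. So the quotient converges to $\int_\Omega a|\nabla v|\big/\int_\Omega b|v|$, which yields the claim.

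The only point needing care is admissibility: that $C_c^\infty(\Omega)\subset W_0^{1,p}(\Omega,a)$ and that \eqref{autoval} is an infimum over a class containing these functions after normalization. This holds by Definition \ref{sobolev_space}, since $a\in L^1$. The two dominated-convergence passages are straightforward, so neither step presents a real obstacle.
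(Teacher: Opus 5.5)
Your proof is correct. The lower bound is the paper's argument: a sequence realizing the $\liminf$, the construction of Section \ref{problema_limite_sezione}, then \eqref{des-Chee} and Proposition \ref{prop:minim}.

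For the upper bound you take a different, more modular route. The paper works directly with sets. It picks smooth $D_n$ nearly optimal for $\sigma$ and tests $\lambda_{p_m}$ with the Lipschitz cut-off of $\chi_{D_n}$ of slope $1/\varepsilon$ on an $\varepsilon$-layer. This gives $\lambda_m\le\varepsilon^{1-p_m}\bigl(P_a(D_n,\Omega)+o(\varepsilon)\bigr)/\int_{D_n}b\,dx$. It must then let $m\to\infty$ before $\varepsilon\to0$, because of the factor $\varepsilon^{1-p_m}$, and finally $n\to\infty$.

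You instead separate the two issues. For each fixed $v\in C_c^\infty(\Omega)$, the $p$-Rayleigh quotient converges to the $1$-quotient by dominated convergence, so $\limsup_{p\to1^+}\lambda_p\le\overline\sigma$. The inequality $\overline\sigma\le\sigma$ from Proposition \ref{minim} then concludes. The geometric content is thus done once, at $p=1$, inside Proposition \ref{minim}: the layer estimate for $\int_{I_\varepsilon(D)\setminus D}a\,dx$ and the density of $C_c^\infty(\Omega)$ in $W_0^{1,1}(\Omega,a)$. Your passage $p\to1$ then has no interplay between $\varepsilon$ and $p$.

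The price is that you depend on Proposition \ref{minim}, whereas the paper's argument only needs the definition of $\sigma$. This dependence is legitimate, since Proposition \ref{minim} is proved beforehand without using Theorem \ref{prop:iden1}.
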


\begin{proof}
Consider a sequence $\{p_m\}$ such that $1<p_m<p_0$ and $p_m\to1$. We assume that the sequence $\{\lambda_m\}$ converges and $\Lambda=\lim_{m\to\infty}\lambda_{m}$. We have to check the inequalities 
\[h(\Omega, a, b)\le \Lambda\le \sigma(\Omega, a, b)\,.\]

Note that the inequality $h(\Omega, a, b)\le \Lambda$ follows from \eqref{des-Chee}. To check the remaining inequality, we proceed as in \cite[Corollary 6]{kawohl1}. Having in mind Proposition \ref{minim}, we fix $n\in\N$ and choose a smooth domain $D_n$ such that $\overline{D_n} \subset \Omega$, $\int_{D_n}b(x)\, dx\ne0$ and 
$$\frac{P_a(D_n,\Omega)}{\int_{D_n} b(x)\, dx}-\sigma(\Omega,a,b)\leq \frac{1}{n}\,.$$ 
Then, we approximate $\chi_{D_n}$ with the function defined as
\begin{equation}
\label{function_v}
v(x)=\begin{cases}
    1 & \textrm{on}\; D_n\\
    0 & \textrm{on}\; \Omega\setminus I_{\varepsilon}(D_n),
\end{cases}
\end{equation}
where $I_{\varepsilon}(D_n)$ is a $\varepsilon$-neighborhood of $D_n$. Furthermore, we take $\abs{\nabla v}=\frac{1}{\varepsilon}$ on an $\varepsilon$-layer outside $D_n$. In this way, for small positive $\varepsilon$, it yields $v\in W_0^{1,\infty}(\Omega, a)$. Now observe that
\begin{equation}\label{eq:deno}
\int_\Omega b(x)|v|^{p_m}dx\ge \int_{D_n} b(x)\, dx\,.
\end{equation}
On the other hand, it holds
\begin{multline}\label{eq:num}
    \int_\Omega a(x)|\nabla v|^{p_m}dx=\int_{I_\varepsilon(D_n)\backslash D_n}a(x)\frac1{\varepsilon^{p_m}}\, dx\\
    =\frac{1}{\varepsilon^{p_m}}\left[\int_{\partial D_n}a(x) \varepsilon\, d\mathcal H^{N-1}+\varepsilon \, o(\varepsilon)\right]
    =\varepsilon^{1-{p_m}}[P_a(D_n,\Omega)+o(\varepsilon)]\,.
\end{multline}
Gathering \eqref{eq:deno} and \eqref{eq:num}, we obtain
\[
\frac{\int_\Omega a(x)|\nabla v|^{p_m}dx}{\int_\Omega b(x)|v|^{p_m}dx}\le\varepsilon^{1-{p_m}}\frac{P_a(D_n,\Omega)+o(\varepsilon)}{\int_{D_n} b(x)\, dx}\,,
\]
and consequently
\begin{equation}\label{eq:esen}
\lambda_m\le \varepsilon^{1-{p_m}}\frac{P_a(D_n,\Omega)+o(\varepsilon)}{\int_{D_n} b(x)\, dx}\,.
\end{equation}
Letting $m$ go to $\infty$, we get
\[
\Lambda\le \frac{P_a(D_n,\Omega)+o(\varepsilon)}{\int_{D_n} b(x)\, dx}\le \sigma(\Omega, a, b)+o(\varepsilon)+\frac1n\,.
\]
Passing now to the limit, first for $\varepsilon\to0$ and then for $n\rightarrow \infty$, we conclude that
\[\Lambda\le \sigma(\Omega, a, b)\]
as desired. 
\end{proof}

Our next concern is to find a suitable class of weights for which the equality $h(\Omega, a, b)= \sigma(\Omega, a, b)$ holds.

\subsection{Existence of $\lim_{p\to1}\lambda_p(\Omega, a,b)$. The Lipschitz case.}

In this subsection, we assume that $a(x)$ is a Lipschitz function such that $a(x)\ge \mu>0$. We begin by considering a result inspired by \cite[Lemma 5.5]{anzellotti}. It can be proven using the Gagliardo's argument for identifying the trace of $W^{1,1}(\Omega)$.

\begin{lem}\label{gag}
For every $u\in BV(\Omega, a)$ and every $\varepsilon>0$ there exists a function $w\in W^{1,1}(\Omega, a)\cap C(\Omega)$ such that
\begin{enumerate}
\item $w\big|_{\{dist(\partial\Omega, x)>\varepsilon\}}=0$
\item $\|w\|_{r'}<\epsilon$
\item $w\big|_{\partial\Omega}=u\big|_{\partial\Omega}$
\item $\displaystyle \int_\Omega a(x) |\nabla w|\, dx\le \int_{\partial\Omega}a(x)|u|\, d\mathcal H^{N-1}+\varepsilon$
\end{enumerate}
\end{lem}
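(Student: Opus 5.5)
We adapt Gagliardo's extension construction for traces of $W^{1,1}(\Omega)$, as in \cite[Lemma 5.5]{anzellotti}, and use the Lipschitz continuity of $a(x)$ only to replace $a(x)$ by its boundary value in a thin layer. Since $BV(\Omega,a)\hookrightarrow BV(\Omega)$, the trace $g:=u|_{\partial\Omega}$ belongs to $L^1(\partial\Omega)$. Moreover $a(x)$ is continuous up to the boundary, so $\int_{\partial\Omega}a|g|\,d\mathcal H^{N-1}\le \|a\|_\infty\|g\|_{L^1(\partial\Omega)}<\infty$. Because $\partial\Omega$ is Lipschitz, a partition of unity and local bi-Lipschitz charts reduce the problem to a collar $\{x'+t\,e : x'\in\partial\Omega,\ 0<t<\delta\}$ along a transversal direction $e$. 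The cost of the charts enters only through the factor $(1+O(\delta))$ in the Jacobian; this factor is not in general arbitrarily close to $1$, so the charts should be flattened, or one should work with the distance function near the boundary, as Gagliardo does.

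\textbf{Step 1: approximate the trace.} Pick $g_k\in C(\partial\Omega)$ with $g_k\to g$ in $L^1(\partial\Omega)$ and $\sum_k\|g_{k+1}-g_k\|_{L^1(\partial\Omega)}<\infty$, and take continuous extensions $G_k\in W^{1,1}(\Omega)\cap C(\overline\Omega)$ of $g_k$.

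\textbf{Step 2: Gagliardo's layered construction.} Choose a decreasing sequence of widths $t_0>t_1>\dots\to0$ with $t_0<\varepsilon$. On the layer $t_{k+1}<d(x)<t_k$, where $d(x)=\operatorname{dist}(x,\partial\Omega)$, interpolate linearly in $d$ between $G_k$ and $G_{k+1}$, and set $w=0$ for $d\ge t_0$, tapering from $G_0$ to $0$ on a first layer. Write $\psi_k$ for the interpolation weight; since $|\nabla d|=1$ a.e. near the boundary,
\[
|\nabla w|\le |\nabla\psi_k|\,|G_k-G_{k+1}|+|\nabla G_k|+|\nabla G_{k+1}|
\]
on each layer. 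Choosing $t_{k+1}$ small forces $\int_{\text{layer}}|\nabla G_k|+|\nabla G_{k+1}|$ to be tiny, so that their sum over $k$ is at most $\varepsilon/3$. The term $|\nabla\psi_k|\,|G_k-G_{k+1}|$ contributes about $\|g_k-g_{k+1}\|_{L^1(\partial\Omega)}$ plus a small error. The first layer contributes approximately $\int_{\partial\Omega}|g_0|$, since its gradient is $G_0/t_0$ and its volume is about $t_0\,\mathcal H^{N-1}$. This gives properties (1) and (3) and continuity inside $\Omega$. For (2), note that $|w|\le\sup_k|G_k|$ on a set of measure $O(t_0)$, so shrinking $t_0$ makes $\|w\|_{r'}$ small. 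The $G_k$ are bounded for each fixed $k$, and a tail estimate controls the layers with large $k$.

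\textbf{Step 3: insert the weight.} Since $|a(x)-a(\pi(x))|\le L\,d(x)\le L t_0$, where $\pi$ is the projection onto $\partial\Omega$, we get
\[
\int_\Omega a|\nabla w|\le \int_\Omega a(\pi(x))|\nabla w|+Lt_0\int_\Omega|\nabla w|.
\]
The second term is harmless because $\int|\nabla w|$ is bounded independently of $t_0$. The first term, by the layer estimates, is roughly $\int_{\partial\Omega}a|g_0|+\|a\|_\infty\sum_k\|g_k-g_{k+1}\|_1$ plus small errors. Taking $g_0$ close to $g$ and the tail $\sum_k\|g_k-g_{k+1}\|_1$ small yields (4).

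\textbf{Main obstacle.} The delicate point is getting the leading term exactly $\int_{\partial\Omega}a|u|\,d\mathcal H^{N-1}$, with constant $1$, rather than $C\int_{\partial\Omega}a|u|$. This requires that the Jacobian of the collar map $x'\mapsto x'-t\nu$ be $1+o(1)$, which fails pointwise for merely Lipschitz boundaries. We would first try to handle it with the coarea formula for $d$: one has $\int_{\{d=t\}}\phi\,d\mathcal H^{N-1}\to\int_{\partial\Omega}\phi\,d\mathcal H^{N-1}$ as $t\to0$ for continuous $\phi$, and this holds for Lipschitz domains. Writing the first-layer integral as $\frac1{t_0}\int_0^{t_0}\int_{\{d=t\}}a|G_0|\,d\mathcal H^{N-1}\,dt$ then gives the exact constant.
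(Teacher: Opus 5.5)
Your proposal is correct and follows the route the paper has in mind. The paper gives no details beyond saying the lemma follows from Gagliardo's layered trace construction, as in Anzellotti's Lemma 5.5; you carry that out, absorbing the Lipschitz weight through its continuity on $\overline\Omega$. The detour through $a\circ\pi$ in Step 3 is unnecessary, since $a|G_k-G_{k+1}|$ can be fed directly into the level-set convergence. For the constant-one issue, only the averaged statement $\frac{1}{t}\int_{\{0<d<t\}}\phi\,dx\to\int_{\partial\Omega}\phi\,d\mathcal H^{N-1}$ for continuous $\phi$ is needed (the localized Minkowski content of a Lipschitz boundary), provided consecutive widths satisfy, say, $t_{k+1}\le t_k/2$. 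Finally, it is $t_k$, not $t_{k+1}$, that must be taken small relative to $G_k$ and $G_{k+1}$.
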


The following result is a consequence of the previous Lemma and \cite[Theorem 3.4.]{baldi}.

\begin{thm}\label{ugua}
Every Lipschitz weight $a(x)$ satisfies $$h(\Omega, a, b)= \sigma(\Omega, a, b).$$
\end{thm}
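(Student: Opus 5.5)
By Remark \ref{oss} we already have $h(\Omega,a,b)\le\sigma(\Omega,a,b)$, so only the reverse inequality needs proof. By Propositions \ref{prop:minim} and \ref{minim} it is enough to show $\overline\sigma\le\overline h$. In other words, every $u\in BV(\Omega,a)$ with $\int_\Omega b|u|\,dx=1$ must be approximated by functions $v\in W^{1,1}_0(\Omega,a)$ (equivalently $C_c^\infty(\Omega)$, by density) satisfying
\[
\int_\Omega a|\nabla v|\,dx\le \int_\Omega a|Du|+\int_{\partial\Omega}a|u|\,d\mathcal H^{N-1}+o(1)
\qquad\text{and}\qquad
\int_\Omega b|v|\,dx\to 1 .
\]
I would take $u=w_0$, a nonnegative minimizer of \eqref{first-chee}, although the argument works for any $u$.

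\emph{Step 1 (interior smoothing).} I would use \cite[Theorem 3.4]{baldi}, a weighted Anzellotti--Giaquinta approximation valid for Lipschitz (hence continuous) weights $a$. It should provide $u_j\in C^\infty(\Omega)\cap W^{1,1}(\Omega,a)$ with $u_j\to u$ in $L^1$, $\int_\Omega a|\nabla u_j|\,dx\to\int_\Omega a|Du|$, and $u_j|_{\partial\Omega}=u|_{\partial\Omega}$. If the theorem does not state the trace property, I would obtain it from the standard mollification-with-partition-of-unity construction. Since $u_j$ is bounded in $BV$, the compact embedding into $L^{r'}$ (note $r'<\tfrac N{N-1}$) gives $\int b|u_j|\to\int b|u|=1$.

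\emph{Step 2 (killing the trace).} Fix $\varepsilon>0$ and apply Lemma \ref{gag} to $u_j$. This yields $w_j$ supported in the strip $\{\mathrm{dist}(x,\partial\Omega)\le\varepsilon\}$ with $w_j=u_j$ on $\partial\Omega$, $\|w_j\|_{r'}<\varepsilon$, and $\int_\Omega a|\nabla w_j|\le\int_{\partial\Omega}a|u|+\varepsilon$. Set $v_j=u_j-w_j$. Then $v_j$ has zero trace, so $v_j\in W^{1,1}_0(\Omega,a)$, and
\[
\int_\Omega a|\nabla v_j|\le \int_\Omega a|\nabla u_j|+\int_{\partial\Omega}a|u|+\varepsilon,
\qquad
\Big|\int b|v_j|-\int b|u_j|\Big|\le \|b\|_r\,\varepsilon .
\]

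\emph{Step 3 (conclusion).} Letting $j\to\infty$ and then $\varepsilon\to0$ gives $\overline\sigma\le\overline h$, and therefore $\sigma(\Omega,a,b)=\overline\sigma\le\overline h=h(\Omega,a,b)$.

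The main obstacle is Step 1. We need strict convergence of the \emph{weighted} total variation together with preservation of the boundary trace, and this is exactly where the Lipschitz hypothesis enters. When mollifying at scale $\delta$ one has $\int a\,|D(\rho_\delta*u)|\le\int(\rho_\delta*a)\,|Du|$, and uniform continuity of $a$ gives $\rho_\delta*a\le a+L\delta$, so the extra error is $L\delta\int|Du|$. This error is controlled by the uniform continuity of $a$; a general $W^{1,1}$ weight gives no such control. The commutator terms coming from the partition of unity are handled as in the unweighted case, because $a$ is bounded on $\overline\Omega$.
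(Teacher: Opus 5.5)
Your proof is correct and uses the same two ingredients as the paper: Lemma~\ref{gag} to remove the boundary trace, and \cite[Theorem 3.4]{baldi} for a trace-preserving smooth approximation with convergence of the weighted total variation, combined through Propositions~\ref{prop:minim} and \ref{minim}. The only difference is the order of the two steps, and both orders rely on the same trace-preservation property of Baldi's approximation: the paper first subtracts $w$ from $u$ and then smooths the zero-trace function $u-w$, while you smooth $u$ first and then subtract $w_j$.
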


\begin{proof}
Recalling Remark \ref{oss}, we only have to check the inequality\\
$\sigma(\Omega, a, b)\le h(\Omega, a, b)$.

Let $u\in BV(\Omega, a)$ and apply Lemma \ref{gag} to get  $w\in W^{1,1}(\Omega, a)\cap C(\Omega)$ satisfying the four conditions. 

Then $u-w\in BV(\Omega, a)$, $(u-w)\big|_{\partial\Omega}=0$ and
\begin{multline*}
\int_\Omega a(x)|D(u-w)|\le \int_\Omega a(x)|Du|+\int_\Omega a(x)|\nabla w|\, dx\\
\le  \int_\Omega a(x)|Du|+\int_{\partial\Omega}a(x)|u|\, d\mathcal H^{N-1}+\varepsilon.
\end{multline*}
By \cite[Theorem 3.4.]{baldi}, there exists a sequence $v_n\in C^\infty(\Omega)\cap W^{1,1}(\Omega, a)$ such that $v_n\big|_{\partial\Omega}=0$,  and
\[\lim_{n\to\infty}\int_\Omega a(x)|\nabla v_n|\, dx=\int_\Omega a(x)|D(u-w)|.\]
We point out that $v_n\in W_0^{1,1}(\Omega)$.
Since $\{v_n\}$ is bounded in $ BV(\Omega, a)$, it is also bounded in $ BV(\Omega)$. Thus, up to subsequences, $\{v_n\}$ converges to $u-w$ in $ L^{r'}(\Omega)$ and so 
\[\lim_{n\to\infty}\int_\Omega b(x)|v_n|\, dx=\int_\Omega b(x)|u-w|\, dx\,.\]
Choosing $n$ large enough, we obtain
\begin{multline}\label{num}
\int_\Omega a(x)|\nabla v_n|\, dx\le \int_\Omega a(x)|D(u-w)|+\varepsilon\\
\le \int_\Omega a(x)|Du|+\int_{\partial\Omega}a(x)|u|\, d\mathcal H^{N-1}+2\varepsilon
\end{multline}
and
\begin{multline}\label{denom}
\int_\Omega b(x)|v_n|\, dx\ge \int_\Omega b(x)|u-w|\, dx-\varepsilon\\
\ge \int_\Omega b(x)|u|\, dx-\int_\Omega b(x)|w|\, dx-\varepsilon\\
\ge \int_\Omega b(x)|u|\, dx-\|b\|_r\|w\|_{r'}-\varepsilon
= \int_\Omega b(x)|u|\, dx-\varepsilon(\|b\|_r+1).
\end{multline}
Gathering \eqref{num} and \eqref{denom}, and having in mind that $v_n\in W_0^{1,1}(\Omega)$, we deduce
\begin{multline*}
\sigma(\Omega, a, b)\le 
\frac{\int_\Omega a(x)|\nabla v_n|\, dx}{\int_\Omega b(x)|v_n|\, dx}\\ 
\le \frac{\int_\Omega a(x)|Du|+\int_{\partial\Omega}a(x)|u|\, d\mathcal H^{N-1}+2\varepsilon}{\int_\Omega b(x)|u|\, dx-\varepsilon(\|b\|_r+1)}
\end{multline*}
for all $\varepsilon>0$. Therefore,
\[\sigma(\Omega, a, b)\le \frac{\int_\Omega a(x)|Du|+\int_{\partial\Omega}a(x)|u|\, d\mathcal H^{N-1}}{\int_\Omega b(x)|u|\, dx}.\]
Since $u\in BV(\Omega, a)$ is arbitrary, the result follows.
\end{proof}

We are now in a position to prove the main theorem of this paper when $a(x)$ is a Lipschitz weight.

\begin{cor}\label{prop:iden0}
The following statements hold for a Lipschitz weight $a(x)$:
\begin{enumerate}
\item There exists $\lim_{p\to1^+}\lambda_p(\Omega, a, b)=h(\Omega, a, b)$.
\item The weighted Cheeger constant coincides with the first eigenvalue of problem \eqref{limit_problem_intro2}.
\end{enumerate}
\end{cor}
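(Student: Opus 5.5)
For item (1), I will combine Theorem \ref{prop:iden1} with Theorem \ref{ugua}. Theorem \ref{prop:iden1} gives
\[h(\Omega,a,b)\le \liminf_{p\to1^+}\lambda_p(\Omega,a,b)\le\limsup_{p\to1^+}\lambda_p(\Omega,a,b)\le \sigma(\Omega,a,b).\]
When $a(x)$ is Lipschitz, Theorem \ref{ugua} says the two ends coincide. Hence the liminf and limsup agree, the limit exists, and it equals $h(\Omega,a,b)$. The standing hypotheses \eqref{eq:A} are satisfied: a Lipschitz function on a bounded Lipschitz domain lies in $W^{1,1}(\Omega)$, and $a\ge\mu$ is assumed. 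So Theorem \ref{prop:iden1} applies without change.

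For item (2), I will show two things: that $h(\Omega,a,b)$ is an eigenvalue of \eqref{limit_problem_intro2}, and that it lies below every eigenvalue.

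For the first, take any sequence $p_m\to1$ with $1<p_m<p_0$. By item (1), $\lambda_{p_m}\to h(\Omega,a,b)$ along the whole sequence, so no further extraction is needed to fix the limit. Then Proposition \ref{limit1}, Corollary \ref{limit3} and Theorem \ref{main0} give a nonnegative, nontrivial $u\in BV(\Omega,a)$ that solves \eqref{limit_problem_intro2} with $\Lambda=h(\Omega,a,b)$. Thus $h(\Omega,a,b)$ is an eigenvalue.

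For minimality, Corollary \ref{comp-Chee} gives $h(\Omega,a,b)\le\Lambda$ for every eigenvalue $\Lambda$. Hence $h(\Omega,a,b)$ is the least eigenvalue, which is item (2).

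As a consistency check, \eqref{des-Chee0} together with Proposition \ref{prop:minim} shows that the limit eigenfunction $u$, normalized by $\int_\Omega b|u|=1$, attains the infimum in \eqref{first-chee}. This is consistent with Remark \ref{test-sol}(3).

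I expect no real obstacle, since all the analytic work is already contained in Theorem \ref{ugua} (which relies on Lemma \ref{gag} and the cited approximation result) and in Theorem \ref{main0}. The one point to watch is that Theorem \ref{main0} was proved under the assumption that $\lambda_m$ converges along the chosen sequence. Item (1) supplies this for every sequence, which is why item (1) has to be established before item (2).
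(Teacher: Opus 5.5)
Your proposal is correct and follows the paper's proof: item (1) comes from combining Theorems \ref{prop:iden1} and \ref{ugua}, and item (2) from Corollary \ref{comp-Chee} together with Theorem \ref{main0}, which shows that the limit $h(\Omega,a,b)$ is an eigenvalue. Your only addition is to spell out the step the paper leaves implicit: once item (1) guarantees convergence, Theorem \ref{main0} applies along any sequence $p_m\to1$.
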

\begin{proof}
The first statement is a consequence of Theorems \ref{prop:iden1} and \ref{ugua}. The second statement follows from Corollary \ref{comp-Chee}.
\end{proof}

\begin{rem}
It follows from Corollary \ref{prop:iden0} and \eqref{des-Chee} that the eigenfunction $u$ we have found in the previous sections minimizes the problem \eqref{first-chee}.
\end{rem}

\subsection{The main result}
The purpose of this subsection is to extend Corollary \ref{prop:iden0} (2) to the general case, even though we do not have Proposition \ref{minim} in this general setting.

\begin{thm}\label{prop:iden}
The weighted Cheeger constant is the first eigenvalue of problem \eqref{limit_problem_intro2}.
\end{thm}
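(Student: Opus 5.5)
The inequality $h(\Omega,a,b)\le\Lambda$ for every eigenvalue $\Lambda$ is already Corollary \ref{comp-Chee}. So it remains to show that $h(\Omega,a,b)$ itself is an eigenvalue, i.e.\ to construct $u,z,\gamma$ as in Definition \ref{sollimite} with $\Lambda=h(\Omega,a,b)$. The plan is to approximate $a$ from below by Lipschitz weights, apply Corollary \ref{prop:iden0} to each approximant, and pass to the limit. The comparison results (Remarks \ref{compar}, \ref{compar2}) give monotonicity of the constants for free.

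\textbf{Step 1: approximating weights.} I would take the inf-convolutions
\[ a_n(x)=\inf_{y\in\Omega}\big(a(y)+n|x-y|\big). \]
Each $a_n$ is $n$-Lipschitz, satisfies $\mu\le a_n\le a$, and the sequence is nondecreasing in $n$. Corollary \ref{prop:iden0} then gives, for each $n$, an eigenpair of the problem with weight $a_n$: an eigenvalue $\Lambda_n=h(\Omega,a_n,b)$, an eigenfunction $u_n\ge0$ normalized by $\int_\Omega b\,u_n=1$, a field $z_n$ with $\|z_n\|_\infty\le1$, and $\gamma_n\in\sign(u_n)$. By Remark \ref{compar2}, $\Lambda_n$ is nondecreasing and $\Lambda_n\le h(\Omega,a,b)$, so $\Lambda_n\to\Lambda\le h(\Omega,a,b)$.

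\textbf{Step 2: compactness and the equation.} The identity \eqref{eq:test-sol} for weight $a_n$, together with $a_n\ge\mu$, bounds $u_n$ in $BV(\Omega)$ uniformly in $n$. Hence, up to subsequences:
\begin{itemize}
\item $u_n\to u$ in $L^q(\Omega)$ for $q<\frac N{N-1}$ and a.e.;
\item $z_n\rightharpoonup z$ and $\gamma_n\rightharpoonup\gamma$ weakly-$*$ in $L^\infty$.
\end{itemize}
Since $b\in L^r$ with $r>N$, we get $\int_\Omega b\,u=1$, so $u\ne0$. Since $a_n z_n\to a z$ weakly in $L^1$ (because $|a_nz_n-a z|\le |a-a_n|+a|z_n-z|$ and $a_n\to a$ a.e. with domination by $a$), letting $n\to\infty$ in $-\dive(a_nz_n)=\Lambda_n b\gamma_n$ gives $-\dive(a z)=\Lambda b\gamma$. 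For the sign condition, $\int b\gamma_n u_n=\int b u_n\to\int b u$ and also $\int b\gamma_nu_n\to\int b\gamma u$. Together with $|\gamma|\le1$ this forces $\gamma=1$ on $\{u>0\}\cap\{b>0\}$. Where $b=0$, $\gamma$ can be redefined without affecting the equation, so $\gamma\in\sign(u)$.

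\textbf{Step 3: energy inequality (the expected obstacle).} I want
\[ \int_\Omega a|Du|+\int_{\partial\Omega}a\,u\,d\mathcal H^{N-1}\le\Lambda. \]
Fix $k$. For $n\ge k$ we have $a_k\le a_n$, so Theorem \ref{lower_semicontinuity} with the Lipschitz weight $a_k$ gives
\[ \int a_k|Du|+\int_{\partial\Omega}a_ku\le\liminf_n\Big(\int a_n|Du_n|+\int_{\partial\Omega}a_nu_n\Big)=\lim\Lambda_n=\Lambda. \]
Letting $k\to\infty$ requires $a_k\uparrow a$ $|Du|$-a.e.\ and $\mathcal H^{N-1}$-a.e.\ on $\partial\Omega$, for the precise representative. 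This is the delicate point: the inf-convolutions converge to the lower semicontinuous envelope of $a$, which must be compared with the precise representative off an $\mathcal H^{N-1}$-null set. If this fails, I would first try replacing $a$ by its precise representative and using approximate lower semicontinuity at Lebesgue points in the $W^{1,1}$ capacity sense. Granting this step, monotone convergence gives the inequality. Proposition \ref{prop:minim} then forces $\Lambda\ge h(\Omega,a,b)$, hence $\Lambda=h(\Omega,a,b)$, and it also shows $a u\in L^1(\partial\Omega)$.

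\textbf{Step 4: remaining conditions.} Now Proposition \ref{green-2} applies to $u$ and gives
\[ \int a|Du|+\int_{\partial\Omega}a u\le\Lambda\int b\gamma u=\int a(z,Du)-\int_{\partial\Omega}a u[z,\nu]. \]
We also have $a(z,Du)\le a|Du|$ by \eqref{disu} and $-u[z,\nu]\le|u|$. Hence both comparisons are equalities: $a(z,Du)=a|Du|$, which gives $(z,Du)=|Du|$ since $a\ge\mu$, and $-[z,\nu]\in\sign(u)$ $\mathcal H^{N-1}$-a.e.\ on $\partial\Omega$. Therefore $h(\Omega,a,b)$ is an eigenvalue, and by Corollary \ref{comp-Chee} it is the least one.
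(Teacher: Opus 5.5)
\textbf{Overall.} Your route is the paper's route, and the one step you leave open is a real gap. The shared strategy is: approximate $a$ from below by nondecreasing Lipschitz weights; take the eigenpairs supplied by Theorem \ref{main0} and Corollary \ref{prop:iden0}; use the uniform $BV$ bound and pass to the limit in the equation; then obtain the energy inequality by freezing $a_\ell$, applying lower semicontinuity, and letting $\ell\to\infty$.

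\textbf{Where you improve on the paper.} Your Step 4 is neater. Once $\int_\Omega a|Du|+\int_{\partial\Omega}a|u|\,d\mathcal H^{N-1}\le\Lambda$ is known, Proposition \ref{green-2} together with $a(z,Du)\le a|Du|$ and $-u[z,\nu]\le|u|$ gives $(z,Du)=|Du|$ and the boundary condition in one stroke. The paper instead proves $(z,Du)=|Du|$ separately, by testing \eqref{prob-k} with $u_k\varphi$. You also get $\Lambda=h(\Omega,a,b)$ directly from Proposition \ref{prop:minim}, where the paper uses Corollary \ref{comp-Chee} at the end. One small addition: to have Anzellotti's pairing and \eqref{disu} available for $u$, record that $u\in L^\infty(\Omega)$. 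This follows from Proposition \ref{acot} applied to each $u_n$ with a uniform constant, exactly as in \eqref{acot-0}.

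\textbf{The gap.} It is the step you ``grant'': $a_\ell\uparrow a$ a.e. in $\Omega$ (already needed in Step 2 for $a_nz_n\to az$), $|Du|$-a.e., and $\mathcal H^{N-1}$-a.e. on $\partial\Omega$. Without it, the limiting objects see $\sup_\ell a_\ell$ rather than $a$.

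\textbf{How the paper handles it.} The paper asserts that every $a\in W^{1,1}(\Omega)$ with $a\ge\mu$ has a lower semicontinuous representative. It then invokes \cite[Theorem 2.2]{baldi} (essentially your inf-convolution) to get $a=\sup_k a_k$ at every point of $\Omega$, and applies monotone convergence.

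\textbf{Why your suspicion is justified.} A nondecreasing sequence of continuous functions converging a.e. to $a$ forces $a$ to have a lower semicontinuous representative, namely $\sup_k a_k$. For $N\ge2$ this fails for some admissible weights. Take $\{q_j\}$ dense in $\Omega$, $\psi\in C_c^\infty(B_1)$ with $0\le\psi\le1$ and $\psi=1$ on $B_{1/2}$, and $r_j=\delta 2^{-j}$. Set $a=2-\min\{1,\sum_j\psi((x-q_j)/r_j)\}$.
\begin{itemize}
\item Since $\sum_j r_j^{N-1}<\infty$, this $a$ lies in $W^{1,1}(\Omega)$, and $1\le a\le 2$.
\item It equals $1$ on the dense open set $\bigcup_j B_{r_j/2}(q_j)$.
\item For small $\delta$, it equals $2$ on a set of positive measure.
\end{itemize}
Consequently every lower semicontinuous function agreeing a.e. with $a$ is $\le1$ everywhere, and your inf-convolutions satisfy $a_n\equiv1$. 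Even when a lower semicontinuous representative exists, one still has to check that it agrees with the precise representative $|Du|$-a.e. and on $\partial\Omega$; the paper does not check this either.

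\textbf{What this means for your proof.} As written, your argument (like the paper's) is complete under an extra hypothesis such as $a\in C(\overline\Omega)$. In that case $a_n\to a$ uniformly on $\overline\Omega$, and Steps 2--4 go through verbatim. For general $W^{1,1}$ weights, this step needs a genuinely different idea rather than a routine verification.
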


\begin{proof}
First note that every $a\in W^{1,1}(\Omega)$ satisfying $a(x)\ge\mu$ has a representative which is lower semicontinuous. So  \cite[Theorem 2.2]{baldi} applies and  we may find a nondecreasing sequence of Lipschitz weights 
$\{a_k\}$ such that $\mu\le a_k(x)$ and $a(x)=\sup_ka_k(x)$ for all $x\in\Omega$. Then Beppo Levi's theorem on monotone convergence implies
\begin{equation}\label{conv-ak}
a_k\to a\quad\text{strongly in }L^1(\Omega).
\end{equation}
By Remark \ref{compar2}, we deduce that the sequence $\{h(\Omega, a_k, b)\}$ is nondecreasing. Set $\Upsilon=\sup_k h(\Omega, a_k, b)$. Appealing again to  Remark \ref{compar2}, we obtain $\Upsilon\le  h(\Omega, a, b)$. The rest of this proof is devoted to seeing the reverse inequality and that $\Upsilon$ is an eigenvalue of problem \eqref{limit_problem_intro2}.

On account of Theorem \ref{main0}, Proposition \ref{prop:minim} and Corollary \ref{prop:iden0}, there exists a nontrivial $u_k\in BV(\Omega, a_k, b)\cap L^\infty(\Omega)$ which minimizes the problem of Proposition \ref{prop:minim} and is a solution to problem
\begin{equation}\label{prob-k}
\left\{\begin{array}{cl}
\displaystyle -\dive\left(a_k(x)\frac{Du_k}{|Du_k|}\right)=h(\Omega, a_k, b)\, b(x)\frac{u_k}{|u_k|}& \text{in }\Omega\\[3mm]
u=0 & \text{on }\partial\Omega.
\end{array}\right.
\end{equation}
Therefore, there exist $z_k\in L^\infty(\Omega;\R^N)$ and $\gamma_k\in L^\infty(\Omega)$ satisfying
\begin{enumerate}
\item $\|z_k\|_\infty\le 1$ and $\|\gamma_k\|_\infty\le 1$
\item $(z_k, Du_k)=|Du_k|$ as measures and $\gamma_k u_k=|u_k|$ a.e. in $\Omega$.
\item $-\dive (a_k (x) z_k)=h(\Omega, a_k, b)\, b(x) \gamma_k$ in the sense of distributions
\item $[z_k, \nu]\in \sign (-u_k)$ $\mathcal H^{N-1}$-a.e. on $\partial\Omega$.
\end{enumerate}
Having in mind Remark \ref{test-sol} and the homogeneity of the minimization problem and \eqref{prob-k}, we may assume 
\[\int_\Omega b(x)|u_k|\, dx=1\,.\]
A further remark is in order, by Proposition \ref{acot}, we have $u_k\in L^\infty(\Omega)$ and
 \begin{equation}\label{acot-0}
 \|u_k\|_\infty\le \left(\frac{S\Upsilon\|b\|_r}{\mu}\right)^\frac{Nr}{r-N}\|u_k\|_1
 \end{equation}
 for all $k\in\N$.

Taking $u_k$ as a test function in \eqref{prob-k}, it follows from 
\[\int_\Omega a_k(x)|Du_k|+\int_{\partial\Omega}a_k(x)|u_k|\, d\mathcal H^{N-1}=
h(\Omega, a_k, b)\int_\Omega b(x)|u_k|\, dx\]
that 
\[\mu\left[\int_\Omega |Du_k|+\int_{\partial\Omega}|u_k|\, d\mathcal H^{N-1}\right]\le
\Upsilon\,.\]
Hence, the sequence $\{u_k\}$ is bounded in $BV(\Omega)$; there is a subsequence, still denoted as $\{u_k\}$, and there exists $u\in BV(\Omega)$ such that
\begin{equation}\label{conv-uk}
u_k\to u\quad\text{a.e. and strongly in }L^q(\Omega), \quad 1\le q<\frac{N}{N-1}\,.
\end{equation}
Moreover, \eqref{acot-0} and \eqref{conv-uk} imply $ \|u\|_\infty\le \left(\frac{S\Upsilon\|b\|_r}{\mu}\right)^\frac{Nr}{r-N}\|u\|_1$, and so $u$ belongs to $L^\infty(\Omega)$ and
\begin{equation}\label{acot-uk}
u_k{\buildrel * \over \rightharpoonup} u\quad\text{*-weakly in } L^\infty(\Omega).
\end{equation}

Our concern is now to check that $u$ is a solution to problem
\begin{equation}\label{prob-0}
\left\{\begin{array}{cl}
\displaystyle -\dive\left(a(x)\frac{Du}{|Du|}\right)=\Upsilon\, b(x)\frac{u}{|u|}& \text{in }\Omega\\[3mm]
u=0 & \text{on }\partial\Omega.
\end{array}\right.
\end{equation}

Fixed $\ell\in\N$, for every $k>\ell$ it holds
\begin{multline*}
\int_\Omega a_\ell(x)|Du_k|+\int_{\partial\Omega}a_\ell(x)|u_k|\, d\mathcal H^{N-1}\\
\le
\int_\Omega a_k(x)|Du_k|+\int_{\partial\Omega}a_k(x)|u_k|\, d\mathcal H^{N-1}\le\Upsilon.
\end{multline*}
Letting $k$ go to infinity, the lower semicontinuity with respect to the $L^1$-convergence implies
\[\int_\Omega a_\ell(x)|Du|+\int_{\partial\Omega}a_\ell(x)|u|\, d\mathcal H^{N-1}
\le\Upsilon.\]
Since it holds for every $\ell\in\N$, we have
\[\int_\Omega a(x)|Du|+\int_{\partial\Omega}a(x)|u|\, d\mathcal H^{N-1}
\le\Upsilon,\]
wherewith $u\in BV(\Omega, a)$. We also deduce that
\[\int_\Omega b(x)|u|\, dx=\lim_{k\to\infty}\int_\Omega b(x)|u_k|\, dx=1,\]
owing to \eqref{conv-uk} and \eqref{A3}. Choosing a further subsequence, if necessary, there exist 
\begin{enumerate}
\item $z\in L^\infty(\Omega; \R^N)$ such that $z_k{\buildrel * \over \rightharpoonup} z$ weakly-* in $L^\infty(\Omega; \R^N)$ and $\|z\|_\infty\le1$
\item $\gamma\in L^\infty(\Omega)$ such that $\gamma_k{\buildrel * \over \rightharpoonup} \gamma$ weakly-* in $L^\infty(\Omega)$ and $\|\gamma\|_\infty\le1$.
\end{enumerate}
Observe that, for every $\varphi\in C_c^\infty(\Omega)$, we get
\[\int_\Omega\varphi|u|\, dx=\lim_{k\to\infty}\int_\Omega\varphi |u_k|\, dx
=\lim_{k\to\infty}\int_\Omega\varphi \gamma_k u_k\, dx=\int_\Omega\varphi \gamma u\, dx,\]
so that $|u|=\gamma u$ a.e,. in $\Omega$. On the other hand, every $\varphi\in C_c^\infty(\Omega)$ satisfies
\[\int_\Omega a_k(x) z_k\cdot\nabla\varphi\, dx= h(\Omega, a_k, b)\int_\Omega b(x)\gamma_k\varphi\, dx,\]
for all $k\in\N$. Taking the limit as $k$ tends to $\infty$, it follows that
\[\int_\Omega a(x) z\cdot\nabla\varphi\, dx= \Upsilon\int_\Omega b(x)\gamma\varphi\, dx\,,\]
and so $-\dive(a(x)z)=\Upsilon b(x)\gamma$ holds in the sense of distributions.

Fix again $\ell\in\N$ and a nonnegative $\varphi\in C_c^\infty(\Omega)$. Taking $u_k\varphi$, with $k>\ell$, as test function in \eqref{prob-k}, we get
\begin{multline*}
\int_\Omega \varphi a_\ell(x)|Du_k|\le \int_\Omega \varphi a_k(x)|Du_k|\\
=-\int_\Omega  u_k a_k(x)z_k\cdot \nabla \varphi\, dx+
h(\Omega, a_k, b)\int_\Omega  \varphi b(x) |u_k| \, dx\,.
\end{multline*}
To pass to the limit as $k$ goes to infinity, we apply the lower semicontinuity of the left hand side, while in the last integral on the right hand side it is enough to keep in mind the convergences \eqref{conv-uk} and  that of $\{h(\Omega, a_k, b)\}$ to $\Upsilon$. Regarding the first term on the right hand side, observe that the pointwise convergence of $\{u_k\}$ and $\{a_k\}$ jointly with the convergences \eqref{conv-ak} and \eqref{acot-uk} lead to the convergence of $\{u_k a_k\}$ strongly in $L^1(\Omega)$. This fact and the *-weak convergence of $\{z_k\}$ allow us to let $k$ go to $\infty$ in this integral. We deduce
\[\int_\Omega \varphi a_\ell(x)|Du|\le 
-\int_\Omega  u\, a(x)z\cdot \nabla \varphi\, dx+
\Upsilon\int_\Omega  \varphi b(x) |u| \, dx\,.\]
Letting $\ell$ go to $\infty$ and recalling that $-\dive(a(x)z)=\Upsilon b(x)\gamma$ holds in the sense of distributions, we conclude that
\begin{multline*}
\int_\Omega \varphi a(x)|Du|\le 
-\int_\Omega  u\, a(x)z\cdot \nabla \varphi\, dx-
\int_\Omega  \varphi u\dive(a(x)z) \, dx\\
=\int_\Omega \varphi a(x) (z, Du)
\end{multline*}
is fulfilled for every nonnegative $\varphi\in C_c^\infty(\Omega)$. 
Therefore, $a(x)|Du|\le a(x) (z, Du)$ as measures, and assumption \eqref{A2} gives 
$|Du |\le (z,Du)$. Since the reverse inequelity is strightforward, we have seen $|Du |= (z,Du)$ as measures.

To check that $u$  is a solution to problem \eqref{prob-0}, 
it remains to prove that $-[z,\nu]\in \sign(u)$ on $\partial\Omega$. To this end, fix $\ell\in\N$, consider $k>\ell$ and take again $u_k$ as a test function, we infer
\begin{multline*}
\int_\Omega a_\ell(x)|Du_k|+\int_{\partial\Omega}a_\ell(x)|u_k|\, d\mathcal H^{N-1}\\
\le \int_\Omega a_k(x)|Du_k|+\int_{\partial\Omega}a_k(x)|u_k|\, d\mathcal H^{N-1}\\
=h(\Omega, a_k, b) \int_\Omega b(x)|u_k|\, dx\le \Upsilon \int_\Omega b(x)|u_k|\, dx\,.
\end{multline*}
The lower semicontinuity of the left hand side with respect to the $L^1$-convergence and \eqref{conv-uk} allows us to pass to the limit as $k$ goes to $\infty$:
\[\int_\Omega a_\ell(x)|Du|+\int_{\partial\Omega}a_\ell(x)|u|\, d\mathcal H^{N-1}
\le \Upsilon \int_\Omega b(x)|u|\, dx\,.\]
Letting now $\ell$ go to $\infty$, we obtain
\begin{multline*}
\int_\Omega a(x)|Du|+\int_{\partial\Omega}a(x)|u|\, d\mathcal H^{N-1}
\le \Upsilon \int_\Omega b(x) \gamma u\, dx\\
= -\int_\Omega u \dive(a(x)z)=\int_\Omega a(x) (z,Du)-\int_{\partial\Omega}a(x)u[z,\nu]\, d\mathcal H^{N-1}
\end{multline*}
due to the Green formula. Simplifying we arrive at
\[\mu\int_{\partial\Omega} |u|+u[z,\nu]\, d\mathcal H^{N-1}\le\int_{\partial\Omega}a(x)\big(|u|+u[z,\nu]\big)\, d\mathcal H^{N-1}
\le 0\]
and so $|u|+u[z,\nu]=0$ $H^{N-1}$-a.e.  on $\partial\Omega$.

We have proven that $u$ is a solution to problem \eqref{prob-0} wherewith $\Upsilon$ is an eigenvalue.
Now, Corollary \ref{comp-Chee} implies $h(\Omega, a, b)\le\Upsilon$ and we are done.
\end{proof}

  \bibliographystyle{abbrv}
\bibliography{name}
\end{document}